\documentclass[letterpaper,leqno,11pt]{article}

\usepackage{amsmath,amsthm,amsfonts,amssymb}
\usepackage{euscript}
\usepackage{enumitem}
\usepackage[usenames]{color}
\usepackage[authoryear,longnamesfirst]{natbib}
\usepackage{mathtools,comment}
\usepackage{url}
\usepackage{sectsty}
\usepackage[symbol]{footmisc}
\RequirePackage{hyperref}
\usepackage{natbib}
\usepackage{xspace}
\usepackage[height=9in,width=5.75in]{geometry}
\usepackage{graphics,graphicx}

\date{January 12, 2012}

\definecolor{darkblue}{rgb}{0.03,0.03,0.265}
\hypersetup{colorlinks=true,urlcolor=darkblue,citecolor=darkblue,linkcolor=darkblue,%
  pdfauthor={Daniel Remenik},pdftitle={Brunet-Derrida Particle Systems Free and Boundary Equations}}

\mathtoolsset{showonlyrefs,showmanualtags}


\newtheorem{thm}{Theorem}
\newtheorem{lem}{Lemma}[section]
\newtheorem{prop}[lem]{Proposition}

\newtheorem{thm2}[lem]{Theorem}
\theoremstyle{definition}
\newtheorem{rem}[lem]{Remark}
\newtheorem*{rem*}{Remark}

\newcounter{assum}

\newcommand{\pp}{\mathbb{P}}

\newcommand{\ee}{\mathbb{E}}
\newcommand{\rr}{\mathbb{R}}
\newcommand{\nn}{\mathbb{N}}

\newcommand{\cx}{\EuScript{X}}
\newcommand{\cp}{{\cal P}}
\newcommand{\cm}{{\cal M}}

\newcommand{\uno}[1]{\mathbf{1}_{#1}}

\newcommand{\hnu}{\widehat{\nu}}

\newcommand{\hf}{\widehat{f}}

\newcommand{\ls}{\lambda^*}
\newcommand{\ep}{\varepsilon}
\newcommand{\vs}{\vspace{6pt}}

\newcommand{\FB}{\hyperlink{FB}{{\rm(FB)}}\xspace}
\newcommand{\FBp}{{\hyperlink{FBp}{{\rm(FB${}^\prime$)}}}\xspace}
\newcommand{\TW}{\hyperlink{TW}{{\rm(TW)}}\xspace}
\newcommand{\br}[2]{\left<{#1},{#2}\right>}

\sectionfont{\large\selectfont}
\subsectionfont{\normalsize\selectfont}
\setlist{itemsep=-2pt,topsep=2pt,label={\footnotesize\textbullet}}

\numberwithin{equation}{section}

\addtolength{\parskip}{3pt}

\let\oldsubsec\subsection
\renewcommand{\subsection}[1]{\oldsubsec{#1}}

\let\oldmarginpar\marginpar
\renewcommand\marginpar[1]{\-\oldmarginpar[\raggedleft\footnotesize #1]%
{\raggedright{\small\textsf{#1}}}}

\allowdisplaybreaks[0]

\begin{document}


\title{Brunet-Derrida particle systems, free boundary problems\\
and Wiener-Hopf equations}
\author{Rick Durrett\\
  Duke University\and
  Daniel Remenik\\
  University of Toronto}
\maketitle

\vspace{-10pt}
\footnote[0]{This work was completed while R.D. was a Professor of
  Mathematics and D.R. was a graduate student at Cornell University. Both authors were
  partially supported by NSF grant DMS 0704996 from the probability program.

  \vskip3pt
  ~~(Corrected April 2023)}
\vspace{-20pt}

\begin{abstract}
  We consider a branching-selection system in $\rr$ with $N$ particles which give birth
  independently at rate $1$ and where after each birth the leftmost particle is erased,
  keeping the number of particles constant. We show that, as $N\to\infty$, the empirical
  measure process associated to the system converges in distribution to a deterministic
  measure-valued process whose densities solve a free boundary integro-differential
  equation. We also show that this equation has a unique traveling wave solution traveling
  at speed $c$ or no such solution depending on whether $c\geq a$ or $c<a$, where $a$ is
  the asymptotic speed of the branching random walk obtained by ignoring the removal of
  the leftmost particles in our process. The traveling wave solutions correspond to
  solutions of Wiener-Hopf equations.
\end{abstract}

\section{Introduction and statement of the results}
\label{sec:intro}

We will consider the following branching-selection particle system. At any time $t$ we
have $N$ particles on the real line with positions
$\eta^N_t\!(1)\geq \cdots \geq\eta^N_t\!(N)$. Each one of the $N$ particles gives birth at
rate 1 to a new particle whose position is chosen, relative to the parent particle, using
a given probability distribution $\rho$ on $\rr$. Whenever a new particle is born, we
reorder the $N+1$ particles and erase the leftmost one (so the number of particles is
always kept equal to $N$). We will denote by
$\cx_N=\{(\eta(1),\dotsc,\eta(N))\in\rr^N\!:\eta(1)\geq\dotsm\geq\eta(N)\}$ the state space of
our process.

We learned of this process through the work of \citet{durrMay}, who considered the special
case in which $\rho$ corresponds to a uniform random variable on $[-1,1]$.  However, our
process is a member of a family of processes that first arose in work of \citet{brunDerr},
who studied a discrete analog of the Fisher-Kolmogorov PDE:
\[\frac{\partial h}{\partial t}=\frac{\partial^2 h}{\partial x^2}+h-h^3.\]
In a simpler version of their process, model A in \citet{brunDerr2}, the discrete time
dynamics occur in the following way: at each time step each of the $N$ particles is replaced by a
fixed number $k$ of particles whose displacements from the parent particle are chosen
independently, and then only the $N$ rightmost particles are kept. They conjectured that
the system moves to the right with a deterministic asymptotic speed $v_N$ which increases
as $N\to\infty$ to some explicit maximal speed $v$ at a rate of order $(\log
N)^{-2}$. This slow rate of convergence was recently proved by \citet{berGou} in the case
$k=2$ under some assumptions on the distribution used to choose the locations of the new
particles (one being that new particles are always sent to the right of the parent
particle).

Although we will say something about the behavior of the system for fixed $N$, our main
interest in this paper is to study the behavior of the empirical distribution of the
process as $N\to\infty$. Before proceeding with this, let us specify some
assumptions. When a particle at $x$ gives birth, the new particle is sent to a location
$x+y$ with $y$ being chosen from an absolutely continuous probability distribution
$\rho(y)dy$.  We will assume that $\rho$ is symmetric and that
$\int_{-\infty}^\infty|x|\rho(x)\,dx<\infty$.  The initial condition for our process will always
be specified as follows: each particle starts at a location chosen independently from a
probability measure $f_0(x)dx$, where $f_0(x)=0$ for $x<0$ and $f_0(x)$ is strictly
positive and uniformly continuous\footnote[1]{\emph{(Added April 2023)} ~ The uniform continuity assumption is missing from the published version of this article, where only continuity was specified.} for $x>0$.

\subsection{Convergence to the solution of a free boundary problem}
\label{sec:cvSolFB}

Let
\[\nu^N_t=\frac{1}{N}\sum_{i=1}^N\delta_{\eta^N_t\!(i)}\]
be the empirical measure associated to $\eta^N_t$. Observe that the initial empirical
measure $\nu^N_0\!(dx)$ converges in distribution to $f_0(x)dx$. We will show that, as
$N\to\infty$, this empirical measure process converges to a deterministic measure-valued
process whose densities are the solution of a certain free boundary problem.

Alternatively one could think of the following (weaker) version of the problem.  It is not
hard to see that the probability measure $\ee(\nu^N_t\!(\cdot))$ on $\rr$ is absolutely
continuous. Let $\overline f^N\!(t,x)$ be its density. We want to study its limit as
$N\to\infty$. We expect this limit $f(t,x)$ to correspond to the densities of the limiting
measure-valued process mentioned above. Now observe that if $\xi^N_t$ is a version of our
process in which we do not erase the leftmost particle after births (i.e., $\xi^N_t$ is a
branching random walk), then we would expect that the density of the corresponding
expected empirical measure converges to the solution $\hf(t,x)$ of the following
integro-differential equation:
\begin{equation}
  \frac{\partial\hf}{\partial t}(t,x)=\int_{-\infty}^\infty\!\hf(t,y)\rho(x-y)\,dy.\label{eq:branchLim}
\end{equation}
This is indeed the case, as we will see in Proposition \ref{prop:nuhat}. Observe that the
total mass of $\hf(t,\cdot)$ grows exponentially (in fact
$\int_{-\infty}^{\infty}\hf(t,y)\,dy=e^t$, see \eqref{eq:massHf}).

By adding the selection step to our process we ensure that the limiting density always has
mass 1, but otherwise the branching mechanism is still governed by the convolution term
appearing in \eqref{eq:branchLim}. Thus we expect that if the limit
$f(t,x)=\lim_{N\to\infty}\overline f^N\!(t,x)$ exists, then it has to satisfy the
following: there exists a continuous increasing function
$\gamma\!:[0,\infty)\longrightarrow\rr$ with $\gamma(0)=0$ such that $(f(t,x),\gamma(t))$
is the unique solution to the following free boundary problem \hypertarget{FB}{{\rm(FB)}}:
\begin{gather}
  \frac{\partial f}{\partial t}(t,x)=\int_{-\infty}^\infty\!f(t,y)\rho(x-y)\,dy
  \quad\forall\,x>\gamma(t)\tag{FB1}\label{eq:FB1}\\
  \int_{\gamma(t)}^\infty\!f(t,y)\,dy=1\tag{FB2}\label{eq:FB2}\\
  f(t,x)=0\quad\forall\,x\leq\gamma(t)\tag{FB3}\label{eq:FB3}
\end{gather}
with initial condition $f(0,x)=f_0(x)$ for all $x\in\rr$. $\gamma(t)$ is a moving boundary
which keeps the mass of $f(t,\cdot)$ at 1, but the speed at which it moves is not known in
advance and depends in turn on $f$.

\begin{rem} \emph{(Added April 2023)} ~  
It is not a priori obvious that \FB has a solution, let alone that such a solution is
unique. 
Existence is proved below as a consequence of the arguments we will use to prove the existence of the limiting density.
In the published version of this article a proof of uniqueness was also provided, but it turns out that the argument was flawed, see Remark \ref{rem:flaweduniq} below.
Uniqueness has now been proved in a more general setting in \cite{atar}.
In that paper, the author in fact proves a version of Theorem \ref{thm:fb} of this paper for a more general, multidimensional variant of $\nu^N_t$.
\end{rem}

We will denote by $\cp$ the space of
probability measures on $\rr$, which we endow with the topology of weak convergence, and
by $D([0,T],\cp)$ the space of c\`adl\`ag functions from $[0,T]$ to $\cp$ endowed with the
Skorohod topology.

\begin{thm}\label{thm:fb}
  For any fixed $T>0$ the sequence of $\cp$-valued processes $\nu^N_t$ on $[0,T]$
  converges in distribution in $D([0,T],\cp)$ to a deterministic $\nu_t\in
  D([0,T],\cp)$. $\nu_t$ is absolutely continuous with respect to the Lebesgue measure for
  every $t\in[0,T]$ and the corresponding densities $f(t,\cdot)$ are characterized by the
  following: there exists a continuous, strictly increasing function
  $\gamma\!:[0,\infty)\longrightarrow\rr$ with $\gamma(0)=0$ such that
  $(f(t,x),\gamma(t))$ is the unique solution of the free boundary problem \FB. In
  particular for $x>\gamma(t)$, $f(t,x)$ is strictly positive, jointly continuous in $t$ and
  $x$ and differentiable in $t$.
\end{thm}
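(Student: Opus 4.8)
The plan is to establish the convergence via a compactness-plus-uniqueness argument, in parallel with the analysis of the branching random walk $\xi^N_t$. First I would show tightness of the laws of $\nu^N_t$ in $D([0,T],\cp)$. Since $\cp$ carries the weak topology, tightness reduces, by standard criteria (e.g. Jakubowski's or Aldous's criterion), to controlling, for each bounded continuous test function $\phi$, the oscillation of the real-valued process $t\mapsto\br{\nu^N_t}{\phi}$; this in turn follows from a semimartingale decomposition of $\br{\nu^N_t}{\phi}$. Each birth event contributes a jump of size $O(1/N)$, and the compensator involves the convolution term $\int f(t,y)\rho(x-y)\,dy$ from \eqref{eq:FB1} together with a correction coming from the deletion of the leftmost particle. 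The martingale part has quadratic variation of order $1/N$ and hence vanishes as $N\to\infty$; combined with the $O(1/N)$ jumps this gives the $C$-tightness and forces every subsequential limit to be a.s.\ continuous and deterministic.

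Next I would identify the limit. Given a subsequential limit $\nu_t$, passing to the limit in the semimartingale decomposition shows that $\br{\nu_t}{\phi}$ solves, for test functions supported to the right of the support of $\nu_t$, the weak form of \eqref{eq:FB1}; the constraint $\nu_t\in\cp$ gives \eqref{eq:FB2}, and the fact that the deletion always removes the current leftmost particle, together with the assumption that $f_0$ vanishes on $(-\infty,0]$, gives \eqref{eq:FB3} with a boundary point $\gamma(t)$ that is nondecreasing. To get absolute continuity of $\nu_t$ and the regularity of $f(t,\cdot)$, I would compare with $\hf(t,\cdot)$: since $\nu_t$ is dominated (before selection) by the branching random walk whose limiting density $\hf$ solves \eqref{eq:branchLim}, and since \eqref{eq:branchLim} has an explicit smoothing solution given by $\hf(t,x)=\sum_{k\ge 0}\frac{t^k}{k!}(f_0*\rho^{*k})(x)$, one reads off that $\nu_t$ has a density $f(t,x)$ which is bounded above by $\hf(t,x)$, jointly continuous, strictly positive for $x>\gamma(t)$, and differentiable in $t$ on that region. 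Finally, the uniqueness statement for \FB — which I would prove by a Gronwall-type estimate on the difference of two solutions, writing \eqref{eq:FB1}--\eqref{eq:FB3} as an integral equation and using that the boundary $\gamma$ is Lipschitz-controlled by the mass constraint \eqref{eq:FB2} — pins down $\nu_t$ uniquely, so the full sequence (not just a subsequence) converges, and strict monotonicity of $\gamma$ follows because $f(t,\cdot)$ is strictly positive just to the right of $\gamma(t)$, forcing mass to leak across the boundary at a positive rate.

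The main obstacle is handling the free boundary $\gamma(t)$ rigorously: unlike the convolution term, which passes to the limit by routine weak-convergence arguments, the location of the leftmost particle is a highly non-smooth functional of the configuration, and one must show both that $\gamma^N(t):=\eta^N_t(N)$ has a deterministic limit and that this limit is continuous and strictly increasing, with $f(t,\cdot)$ not developing an atom or a discontinuity at $\gamma(t)$. I expect this to require a careful a priori estimate — bounding how fast mass can accumulate near the leftmost particle using the domination by $\hf$ and the continuity of $f_0$ near $0$ — so that the mass constraint \eqref{eq:FB2} determines $\gamma(t)$ continuously. The uniqueness proof for \FB is the technical linchpin that upgrades subsequential convergence to full convergence, and getting the Gronwall argument to close in the presence of the moving, $f$-dependent boundary is where most of the work lies.
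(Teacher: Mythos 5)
Your sketch identifies the right difficulties but does not resolve them, and the route you propose is substantially different from the one the paper takes; more importantly, the two steps you flag as ``where most of the work lies'' are precisely where your plan is likely to stall.

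Concretely, the semimartingale decomposition of $\br{\nu^N_t}{\phi}$ has a drift term of the form $\int\!\int\phi(x+y)\rho(y)\,dy\,\nu^N_s(dx)-\phi\bigl(\min\eta^N_s\bigr)$ (up to negligible corrections), and the second piece is not a continuous functional of $\nu^N_s$ in the weak topology. To pass this to the limit for test functions ``supported to the right of the support of $\nu_t$'' you already need to know that $\min\eta^N_s$ converges to a deterministic continuous $\gamma(s)$ with $\gamma(s)\le\gamma(t)$ — which is essentially the content of the statement you are trying to prove. There is a genuine circularity here that your sketch does not break. Likewise, the proposed Gronwall estimate for uniqueness of \FB is not obviously available: the boundary $\gamma$ depends nonlocally on $f$ through \eqref{eq:FB2}, and a small $L^1$ (or $L^\infty$) discrepancy between two candidate solutions can shift their boundaries by an amount that is not controlled linearly by that discrepancy, so the differential inequality does not visibly close. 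You acknowledge both problems but offer only the hope of ``a careful a priori estimate,'' which is not yet a proof.

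The paper avoids both obstacles by not working with the measure-valued martingale problem at all until the very last step. Instead it works with the tail distribution $F^N(t,x)=\nu^N_t([x,\infty))$ and two auxiliary objects: a process $\nu^{N,k}_t$ that branches freely on dyadic subintervals of length $2^{-k}$ and only ``shaves off'' excess mass at dyadic times, and its deterministic $N\to\infty$ analogue $\nu^k_t$ built from the free-branching PDE \eqref{eq:fK} with shaving. The key structural inputs are: a coupling of $\eta^{N}_t$ with $\eta^{N,k}_t$ (good/dangerous/bad particles, Lemma~\ref{lem:cvMassNuNk}) giving an estimate on $\ee|\eta^N_t\Delta\eta^{N,k}_t|$ that is \emph{uniform in $N$}, so the $k\to\infty$ and $N\to\infty$ limits can be interchanged; a monotonicity in $k$ of $F^k$ (Lemma~\ref{lem:FkIncr}), which yields the existence of $F=\lim_k F^k$ by monotone convergence and, via Dini, the uniform convergence needed to pass to the limit in the integral equation; an Arzel\`a--Ascoli argument for the shaving points $X^k(t)$ (Lemma~\ref{lem:cvToGamma}) giving the continuous boundary $\gamma$; and a purely deterministic adaptation of the same coupling to compare $f^k$ against an arbitrary hypothetical solution $h$ of \FB, which is how uniqueness is proved — not by Gronwall. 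Aldous' criterion is invoked only at the very end, once the tail distributions are already known to converge to a deterministic limit, so tightness only has to be upgraded to convergence, which is easy. In short: you have the right list of obstacles but propose to attack them head-on via the martingale problem and a contraction estimate, neither of which is shown to go through; the paper's dyadic discretization, coupling, and monotone-limit machinery is a genuinely different and, for this problem, essential route.
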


Let us remark that there are at least three other examples in the literature of particle
systems converging to the solution of a free boundary equation, but in those cases the
limiting equation is of a different type. \citet{chaySwin} study a particle system
corresponding to a liquid-solid system with an interface subject to melting and freezing,
\citet{landimOllaVolchan} study a tracer particle moving in a varying environment
corresponding to the simple symmetric exclusion process, and \citet{gravQuas} study an
internal diffusion limited aggregation model. In all three cases cases an hydrodynamic limit is
proved with the limiting equation being closely related to the famous Stefan problem, which
involves free boundary problems for the heat equation where the moving boundary separates
the solid and liquid phases (see \citet{meirmanov} and references therein for more on this
problem).

\subsection{Behavior of the finite system}\label{sec:finite}

To study the finite system it will be useful to introduce the \emph{shifted} process
$\Delta^N_t$, which we define as follows:
\[\quad\quad\Delta^N_t=(\Delta^N_t\!(1),\dotsc,\Delta^N_t\!(N))\qquad\text{with }\quad
\Delta^N_t\!(j)=\eta^N_t\!(j)-\eta^N_t\!(N).\] Observe that $\Delta^N_t\!(N)$ is always 0.
It is clear that $\Delta^N_t$ is also a Markov process, and its transitions are the same
as those of $\eta^N_t$ except that after erasing the leftmost particle the $N$ remaining
particles are shifted to the left so that the new leftmost one lies at the origin.

We will denote by $\min\eta^N_t=\eta^N_t\!(N)$ and $\max\eta^N_t=\eta^N_t\!(1)$ the locations
of the leftmost and rightmost particles in $\eta^N_t$. 

\begin{thm}\label{thm:finite}
  For every fixed $N>0$ the following hold:
  \begin{enumerate}[label=(\alph*)]
  \item There is an $a_N>0$ such that
    \[\lim_{t\to\infty}\frac{\min\eta^N_t}{t}=\lim_{t\to\infty}\frac{\max\eta^N_t}{t}=a_N\]
    almost surely and in $L^1$. Moreover, the sequence $(a_N)_{N>0}$ is non-decreasing.
  \item The process $\Delta^N_t$ has a unique stationary distribution $\mu_N$, which is
    absolutely continuous.
  \item For any (random or deterministic) initial condition $\nu_0$ we have
    \[\left\|\pp^{\nu_0}\!\left(\Delta^N_t\in\cdot\right)-\mu_N(\cdot)\right\|_{TV}\xrightarrow[t\to\infty]{}0.\]
  \end{enumerate}
\end{thm}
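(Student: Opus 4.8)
The plan is to establish the three parts in the order (a), (b), (c), using a combination of subadditivity, a Lyapunov/regeneration argument, and coupling. For part (a), I would work with the shifted process $\Delta^N_t$ and the displacement $\eta^N_t\!(N)$ of the leftmost particle. The key observation is that the increments of $\min\eta^N_t = \eta^N_t\!(N)$ over disjoint time intervals are governed by the same Markov dynamics driven by $\Delta^N$, so that a subadditive ergodic theorem applies once we know $\Delta^N_t$ has good recurrence properties. Concretely, I would first show $\ee[\,|\min\eta^N_t|\,] < \infty$ for each $t$ (using the finite-first-moment assumption on $\rho$ together with the fact that only $N$ particles are ever present, so only finitely many births occur in bounded time, each moving the configuration by an $L^1$ amount). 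Writing $\min\eta^N_{s+t} - \min\eta^N_s = F(\Delta^N_s, \text{births on }[s,s+t])$, the process $\min\eta^N_t$ is an additive functional over the Markov chain $\Delta^N_t$; combined with part (b) (existence of a stationary distribution $\mu_N$ and convergence to it) this yields the almost sure and $L^1$ limit $a_N$ via the ergodic theorem for additive functionals of positive recurrent Markov processes, and $a_N > 0$ because every birth-and-selection step strictly pushes the configuration to the right in expectation. The same limit holds for $\max\eta^N_t$ because $\max\eta^N_t - \min\eta^N_t = \Delta^N_t\!(1)$ is tight under $\mu_N$, hence $o(t)$. Monotonicity of $(a_N)$ in $N$ should follow from a coupling in which the $N$-particle system is dominated below by the $(N+1)$-particle system: keeping one extra particle can only make the leftmost particle of the larger system move at least as fast, which transfers to the asymptotic speeds.

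For parts (b) and (c) I would set up a Lyapunov function argument for the Markov process $\Delta^N_t$ on the state space $\cx_N \cap \{\Delta(N)=0\}$, which is effectively $\{0 = \Delta(N) \le \cdots \le \Delta(1)\} \subset \rr^{N-1}$. The natural candidate is $V(\Delta) = \Delta(1)$, the spread of the configuration, or a smooth variant of it. The generator acts on $V$ by: with rate $1$ each particle gives birth; if the newborn lands to the right of the current rightmost particle it increases $\Delta(1)$ by its (positive) displacement, but whenever the newborn lands to the right of the current leftmost particle — which happens with probability bounded below, since $\rho$ is symmetric with positive density near $0$ — the leftmost particle is erased and the whole configuration is re-anchored, which typically decreases the spread. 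I would show that the expected displacement under a birth has finite mean (from the $\rho$ moment assumption) while the expected contraction when the bottom particle is replaced is bounded below by a positive constant times (roughly) the gap $\Delta(N-1)$, or more robustly that from any state there is uniformly positive probability of a sequence of moves that brings $\Delta(1)$ below any fixed threshold. This gives a Foster–Lyapunov drift condition of the form $\mathcal L V \le -c + b\,\uno{V \le K}$, which by the standard theory (Meyn–Tweedie) yields positive Harris recurrence, hence a unique stationary distribution $\mu_N$ and convergence in total variation from every initial condition, giving (c); the absolute continuity of $\mu_N$ in (b) follows because the dynamics inject a convolution with the absolutely continuous kernel $\rho$ at every birth, so the law of $\Delta^N_t$ is absolutely continuous for every $t>0$ regardless of $\nu_0$, and this property passes to the stationary measure.

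The main obstacle I expect is establishing the drift condition rigorously — specifically, controlling the spread $\Delta(1)$ when the configuration is very stretched out. When $\Delta(1)$ is huge, a single birth near the rightmost particle enlarges the spread, and one must argue that this is outweighed by the contraction occurring when the bottom particles are successively erased; but if the particles are spread very far apart, erasing the bottom one only reduces $\Delta(1)$ by the bottom gap $\Delta(N-1)$, which need not be large. The cleanest fix is probably not to use $\Delta(1)$ directly but to use a Lyapunov function that is sensitive to the \emph{sum} of gaps, e.g. $V(\Delta) = \sum_{j=1}^{N-1}\Delta(j)$ or $\sum_j (\Delta(j) - \Delta(j+1))$ weighted appropriately, together with a petite-set/small-set verification for the sublevel sets (which is where absolute continuity of $\rho$ and the finite particle number are used to get a minorization). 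An alternative route to (b)–(c) that sidesteps some of this is a regeneration argument: find a recurrent event — e.g. a time at which all $N$ particles are, after re-anchoring, contained in a fixed bounded window — show it recurs with finite expected return time using the moment bound on $\rho$, and build a coupling from stationarity at these regeneration times; this gives both uniqueness of $\mu_N$ and the TV convergence, and feeds back the positive recurrence needed to close part (a).
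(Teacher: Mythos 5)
Your overall architecture is sound but differs from the paper's in two places, and your primary route for parts (b)--(c) has the gap you yourself flag, which the paper sidesteps entirely.

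For part (a), you propose to derive the speed $a_N$ from the ergodic theorem for additive functionals of $\Delta^N_t$, which makes (a) logically dependent on (b). The paper instead applies the subadditive ergodic theorem directly to $\max\eta^N_n$ (using a coupling $\eta^N_{k,n}$ of time-shifted copies driven by the same $U^N_i,R_i$), so (a) is self-contained; the equality of the min-speed and max-speed is then obtained not from tightness of $\mu_N$ under the stationary law (which on its own does not give $\max\eta^N_t-\min\eta^N_t=o(t)$ almost surely without further work on moments or a Borel--Cantelli argument) but from a genealogical argument: tracing $\max\eta^N_t$ back $N$ generations necessarily hits a particle that was erased, hence lies left of $\min\eta^N_t$, so the spread is bounded by a sum of $N$ displacements. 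Both routes can be made to work, but the paper's avoids circularity and avoids the $o(t)$ gap. Your argument that $a_N>0$ because ``every birth-and-selection step strictly pushes right in expectation'' is not quite right as stated; the paper's cleaner observation is $a_N\geq a_1=\int_0^\infty x\rho(x)\,dx>0$, since for $N=1$ the single particle is a random walk with jumps $R\vee 0$.

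For (b)--(c), the obstacle you identify in the Foster--Lyapunov route is genuine, and the paper does not resolve it by a cleverer Lyapunov function; it avoids the drift condition altogether. The key fact you are missing is a \emph{uniform} minorization over the whole state space: letting $A$ be the set of configurations with all consecutive gaps in $(0,L)$, one has $\pp^\xi(\Delta^N_{N-1}\in A)\geq(\delta/N)^{N-1}$ for \emph{every} $\xi\in\cx_N$, with $\delta=\rho((0,L))$. The mechanism is that branching the rightmost particle $N-1$ times in succession, each offspring landing within $(0,L)$ of its parent, pushes out every particle of the original configuration and produces a fresh chain in $A$; this probability does not depend on how spread out $\xi$ was to begin with. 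This uniform bound immediately yields $\sup_\xi\ee^\xi(\tau_A)<\infty$ (hence positive Harris recurrence) and the minorization needed for total-variation convergence via Athreya--Ney, with no Lyapunov function required. Your ``alternative route'' (regenerate at times when the re-anchored configuration lies in a bounded window) is essentially this, so you had the right idea in reserve; the point is that it is not an alternative but the main and only workable route here, because the spread $\Delta(1)$ really can become arbitrarily large and there is no drift pulling it back --- the chain regenerates by overwriting the old configuration, not by contracting it.
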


From this point on we will assume that the displacement distribution $\rho$ has
exponential decay. To be precise, we assume that there is an $\alpha>0$ such that
\begin{equation}
  \label{eq:expDecayRho}\rho(x)\leq Ce^{-\alpha|x|}.
\end{equation}
for some $C>0$. We will write
\[\Theta=\sup\left\{\alpha>0\!:\,\sup_{x\in\rr}\big[e^{\alpha|x|}\rho(x)\big]<\infty\right\}.\]
That is, $\Theta\in(0,\infty]$ is the maximal exponential rate of decay of $\rho$ in the
sense that $\rho(x)\leq Ce^{-\alpha x}$ for some $C>0$ when $\alpha<\Theta$ but not when
$\alpha>\Theta$. $\Theta$ may be $\infty$ (as in the cases where $\rho$ has compact support
or $\rho$ corresponds to a normal distribution), while $\Theta>0$ is ensured by
\eqref{eq:expDecayRho}.

Now let
\begin{equation}
  \phi(\theta)=\int_{-\infty}^\infty e^{\theta x}\rho(x)\,dx\label{eq:defphi}
\end{equation}
be the moment generating function of the displacement distribution $\rho$.  Equation
\eqref{eq:expDecayRho} and our definition of $\Theta$ imply that $\phi(\theta)<\infty$ for
$\theta\in(-\Theta,\Theta)$.  To avoid unnecesary technical complications we will make the
following extra assumption, which in particular implies that $\phi(\theta)=\infty$ for
$|\theta|>\Theta$:
\begin{equation}
  \label{eq:expMoments:2}
  \frac{\phi(\theta)}{\theta}\xrightarrow[\theta\to\Theta^-]{}\infty.
\end{equation}
This assumption always holds when $\Theta=\infty$: choosing $0<l_1<l_2$ so that
$\rho(x)\geq M$ for some $M>0$ and all $x\in[l_1,l_2]$ we get
\[\frac{1}{\theta}\int_{-\infty}^\infty\!e^{\theta x}\rho(x)\,dx
\geq\frac{M(l_2-l_1)}{\theta}e^{\theta l_1}\xrightarrow[\theta\to\infty]{}\infty.\]

Our next result will relate the asymptotic propagation speed $a_N$ of our process
$\eta^N_t$ with the asympotic speed of the rightmost particle in the branching random walk
$\xi^N_t$.

\begin{thm}\label{thm:limAN}
\[\lim_{N\to\infty}a_N=a,\]
where $a$ is the asymptotic speed of the rightmost particle in $\xi^N_t$ , i.e., in a
branching random walk where particles branch at rate 1 and their offspring are displaced
by an amount chosen according to $\rho$.
\end{thm}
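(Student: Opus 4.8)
The plan is to prove the two inequalities $\limsup_N a_N \le a$ and $\liminf_N a_N \ge a$ separately, using a natural coupling between $\eta^N_t$ and the branching random walk $\xi^N_t$ started from the same $N$ particles. For the upper bound, observe that the removal of the leftmost particle can only slow the system down: if we run $\xi^N_t$ and $\eta^N_t$ under the obvious coupling (same birth clocks, same displacements), then $\max\eta^N_t \le \max\xi^N_t$ for all $t$, since every particle ever present in $\eta^N_t$ has a genealogical ancestor in $\xi^N_t$. Dividing by $t$ and letting $t\to\infty$ gives $a_N \le a$ for every $N$ (using Theorem \ref{thm:finite}(a) and the known a.s. speed $a$ of the rightmost particle in branching random walk, which is finite precisely because of the exponential-moment assumption \eqref{eq:expDecayRho}). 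Since this holds for all $N$, $\limsup_N a_N \le a$; in fact $a_N\le a$ outright, and monotonicity of $a_N$ from Theorem \ref{thm:finite}(a) already tells us $\lim_N a_N$ exists and is $\le a$.

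The substance is the lower bound $\liminf_N a_N \ge a$, i.e. showing that selection does not cost asymptotic speed as $N\to\infty$. The idea is that the rightmost particle of the branching random walk sits far ahead of the bulk, so for $N$ large the particles near the front of $\eta^N_t$ are never the ones getting erased. Concretely, I would fix $c<a$ and show $a_N > c$ for $N$ large. Choose $\theta\in(0,\Theta)$ with $\phi(\theta)/\theta$ close to its infimum so that $\inf_{\theta\in(0,\Theta)}\phi(\theta)/\theta = a$ (this is the standard large-deviation formula for the branching-random-walk speed, available under \eqref{eq:expMoments:2}). Then the expected number of $\xi^N$-particles to the right of $ct$ descended from a single initial particle is $e^t$ times the probability a single random walk exceeds $ct$, which grows like $e^{(1 - \theta c + o(1))t}\cdot$(something), and for $c<a$ one can arrange $1-\inf_\theta(\phi(\theta) - \theta c)>0$... more carefully: the number of BRW particles beyond $ct$ grows exponentially when $c<a$. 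The key step is a \emph{block/regeneration argument}: run the process for a time window $[0,\tau]$; with the branching random walk there are, with high probability, at least $N$ particles that have moved a distance $\ge c\tau$ to the right of their starting point; in the selection process, as long as fewer than $N$ particles have ``fallen behind'' the level $c\tau$, the front particles of $\eta^N$ coincide with front particles of $\xi^N$, so the leftmost of $\eta^N$ at time $\tau$ is at least... Then restart from time $\tau$ and iterate, using Theorem \ref{thm:finite}(c) (or a direct renewal estimate) to get that $\min\eta^N_{k\tau}/(k\tau)\to$ something $\ge c$.

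The main obstacle is making this block argument quantitative and uniform: one must control the probability that, within a window of length $\tau$, at least $N$ particles of the branching random walk advance past level $c\tau$, \emph{and} that this happens before too many particles of $\eta^N$ are erased — i.e. that the ``selection pressure'' felt by the front is negligible. A clean way to organize this is to compare $\eta^N$ with a \emph{truncated} branching random walk in which one only keeps particles above a slowly-moving barrier at speed $c$; one shows this truncated process still has $\ge N$ particles for all time with high probability (for $N$ large), and that it is dominated by $\eta^N$ (since $\eta^N$'s barrier $\eta^N_t(N)$ is determined endogenously to keep exactly $N$ particles, whereas the $c$-barrier is more aggressive once $\eta^N_t(N) > ct$). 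Coupling these and invoking the first-moment growth estimate for the truncated BRW — which is where \eqref{eq:expDecayRho} and \eqref{eq:expMoments:2} enter, to guarantee $\inf_\theta\phi(\theta)/\theta = a$ is attained and that the relevant exponential rate is strictly positive for $c<a$ — yields $a_N\ge c$ for all large $N$. Letting $c\uparrow a$ finishes the proof. I expect the second-moment or branching-process extinction estimate needed to turn ``expected number $\ge N$'' into ``actual number $\ge N$ with high probability'' to be the most delicate bookkeeping, but it is standard for branching random walk with exponential tails.
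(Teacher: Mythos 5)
The paper does not actually prove Theorem~\ref{thm:limAN}: the remark that follows it cites equation~(8) of Durrett--Mayberry for the uniform case and asserts, without detail, that the couplings used there extend to general $\rho$. So there is no in-text proof to compare against, and your proposal has to be judged on its own.

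Your upper bound is fine: under the natural coupling (the Poisson clock of a particle present in $\xi^N$ triggers a birth in $\eta^N$ only if that particle is still alive in $\eta^N$), the genealogical tree of $\eta^N$ is a subtree of that of $\xi^N$, so $\max\eta^N_t\le\max\xi^N_t$, and with monotonicity of $a_N$ this gives $\lim_N a_N\le a$.

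The lower bound as written has a genuine gap, and it is precisely in the domination claim. You want to compare $\eta^N$ with a branching random walk truncated at the moving barrier $ct$ and assert that the truncated process is $\preceq\eta^N$. That inequality is false: the truncated BRW can carry strictly more than $N$ particles, all of them above $ct$, while $\eta^N$ has exactly $N$ particles; testing the order $\preceq$ at level $x=ct$ then gives truncated $\not\preceq\eta^N$. Your parenthetical justification --- that the $c$-barrier ``is more aggressive once $\eta^N_t(N)>ct$'' --- is circular, since $\eta^N_t(N)>ct$ is exactly the conclusion you are trying to reach, and in fact has the aggressiveness backwards: when $\eta^N_t(N)>ct$ the $c$-barrier kills \emph{fewer} particles than the endogenous selection does, which is why the truncated process ends up with more mass, not less. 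A comparison that does go in the right direction is with a process that applies \emph{both} removal rules simultaneously (kill below $ct$ \emph{and} remove the leftmost whenever the count exceeds $N$); that process is $\preceq\eta^N$ under a coupling of the type used in Section~\ref{sec:proofs:finite}, and if it keeps $N$ particles on $[0,T]$ with high probability then $\eta^N_T(N)\ge cT$. But this is a different object from the truncated BRW, and showing it keeps $N$ particles is harder, because it already incorporates selection. Even with the right comparison in hand, the ``with high probability at least $N$ BRW particles advance past $c\tau$'' estimate needs an actual second-moment (or many-to-two) bound under \eqref{eq:expDecayRho}--\eqref{eq:expMoments:2}, and the block iteration needs a concrete restart from a one-point configuration together with a monotone coupling to propagate the estimate from block to block; these are the parts you have flagged as ``standard'' but not carried out, and they are where the content of the Durrett--Mayberry argument lives.
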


\begin{rem}
  If $\rho$ is uniform on $[-1,1]$ this follows from (8) in \citet{durrMay}. However, the
  couplings on which the proof is based extend easily to our more general setting, so we
  do not give the details of the proof.
\end{rem}

The speed $a$ has an explicit expression (see \citet{bigg}): by standard results of the
theory of large deviations, if $S_t$ is a continuous time random walk jumping at rate 1
and with jump distribution $\rho$, then the limit
\begin{equation}
  \Lambda(x)=\lim_{t\to\infty}\frac{1}{t}\log\pp\!\left(S_t>xt\right)\label{eq:defLambda}
\end{equation}
exists and equals $-\left(\sup_{\theta>0}\{x\theta-\phi(\theta)\}+1\right)$; $a$ is given
then by the formula $\Lambda(a)=-1$ (see \citet{durrMay} for more on
this).

\subsection{Traveling wave solutions}\label{sec:wave}
A \emph{traveling wave solution} of \FB is a solution of the form $f(t,x)=w(x-ct)$ and
$\gamma(t)=ct$ for some $c>0$ (with initial condition $f_0(x)=w(x)$).

If $w$ is a traveling wave solution, then from \eqref{eq:FB1} we get
\[cw'(z)=-\int_0^\infty\!w(y)\rho(z-y)\,dy\quad\forall\,z>0,\] so integrating from $z=x$
to $\infty$ we deduce that $w$ must solve the following system of equations
\hypertarget{TW}{{\rm(TW)}}:
\begin{gather}
  w(x)=\frac{1}{c}\int_0^\infty\!w(y)R(x-y)\,dy\quad\forall\,x>0,\tag{TW1}\label{eq:waveSol:a}\\
  \int_0^\infty\!w(x)\,dx=1,\tag{TW2}\label{eq:waveSol:b}\\
  w(x)=0\quad\forall\,x\leq0,\tag{TW3}\label{eq:waveSol:c}\\
  w(x)\geq0\quad\forall\,x>0,\tag{TW4}\label{eq:waveSol:d}
\end{gather}
where
\[R(x)=\int_x^\infty\!\rho(y)\,dy\] is the tail distribution of $\rho$. On the other hand,
it is easy to check that if $w$ satisfies \TW and $f_0(x)=w(x)$, then
$(w(x-ct),ct)$ is the solution of \FB.

Equation \eqref{eq:waveSol:a} is known as a Wiener-Hopf equation. Equations of this type have been
studied since at least the 1920's (at the time in relation with the theory of radiative
energy transfer), and have since been extensively studied and found relevance in diverse
problems in mathematical physics and probability.  In general, these equations can be
solved using the Wiener-Hopf method, which was introduced in \citet{wienHopf}
(see Chapter 4 of \citet{paleyWien} and also \citet{krein}).
But the solutions provided by this method are not necessarily positive, so they are not
useful in our setting. Instead, we will rely on the results of \citet{spitz1}, 
who studied these equations via probabilistic methods in the case where $R(x)/c$ is a
probability kernel.

To do that we need to convert our equation to one where the kernel with respect to which
we integrate is a probability kernel. To that end, we need to make the following
observation. In Lemma \ref{lem:goodSpeed} we will show that there is a $\ls\in(0,\Theta)$
such that
\begin{equation}
  \frac{\phi(\ls)}{\ls}=\min_{\lambda\in(0,\Theta)}\frac{\phi(\lambda)}{\lambda}=a,\label{eq:claimGoodSpeed}
\end{equation}
where $\phi$ is the moment generating function of $\rho$ defined in \eqref{eq:defphi} and
$a$ is the asymptotic speed introduced in Theorem \ref{thm:limAN}. On the other hand, the
function $\lambda\mapsto\phi(\lambda)/\lambda$ is continuous and goes to $\infty$ as
$\lambda\to0$ (and moreover, as we will show in Lemma \ref{lem:goodSpeed}, it is
decreasing on $(0,\ls)$). Thus for every $c\geq a$ there is a $\lambda\in(0,\ls]$ such
that $\phi(\lambda)/\lambda=c$.

Observe that the tail distribution $R$ of $\rho$ has the same decay as $\rho$ (see
 \eqref{eq:expDecayRho}). That is, for every $0<\alpha<\Theta$ there is a $C>0$ so that
 \begin{equation}
   \label{eq:expDecayR}
   R(x)\leq Ce^{-\alpha x}\qquad\forall\,x\geq0.
 \end{equation}
 Fix $c\geq a$ and use the above observation to pick a $\lambda\in(0,\ls]$ such that
 $\phi(\lambda)/\lambda=c$. Equation \eqref{eq:expDecayR} implies that the function $x\mapsto
 e^{\lambda x}R(x)$ is in $L^1(\rr)$. Moreover, integration by parts yields
\begin{equation}
\int_{-\infty}^\infty\!e^{\lambda x}R(x)\,dx=\frac{\phi(\lambda)}{\lambda}.\label{eq:goodMass}
\end{equation}
Therefore
\begin{equation}
  k(x)=\frac{\lambda}{\phi(\lambda)}e^{\lambda
    x}R(x)\label{eq:defK}
\end{equation}
is a probability kernel. On the other hand, if $w$ is a solution of \TW with
$c=\phi(\lambda)/\lambda$ then it is easy to check that $u(x)=e^{\lambda x}w(x)$ satisfies
\begin{equation}
  u(x)=\int_0^\infty\!u(y)k(x-y)\,dy\quad\forall\,x\geq0.\label{eq:waveSol:densU}
\end{equation}
Thus the idea will be to recover solutions of \TW from positive solutions
of \eqref{eq:waveSol:densU}.

Positive solutions of \eqref{eq:waveSol:densU} can be regarded as densities of stationary
measures for the following Markov chain. Let $\xi_n$ be a sequence of i.i.d. random
variables with distribution given by $k$, let $X_0=0$ and define
\[X_{n+1}=\left(X_n+\xi_n\right)^+,\] where $x^+=\max\{0,x\}$. This chain appears, for
example, in the study of ladder variables for a random walk (see Chapter XII of
\citet{feller}) and in the study of the GI/G/1 queue (see Chapter 5 of
\citet{durrPTE}). If $u$ satisfies \eqref{eq:waveSol:densU}, it is 0 on the negative
half-line and it is non-negative on the positive half-line, then the measure (supported on $[0,\infty)$)
having $u$ as its density is invariant for $X_n$. Assuming that $\ee(|\xi_1|)<\infty$,
$X_n$ is recurrent, null-recurrent or transient according to whether $\ee(\xi_1)$ is
negative, zero or positive. As we will see in Section \ref{sec:proof:wave}, this
expectation is negative in our case if and only if $\lambda<\ls$ and it is zero for
$\lambda=\ls$. In both cases the theory of recurrent Harris chains suggests (and Theorem
\ref{thm:spitzer} will prove) that there exists a unique (up to multiplicative constant)
invariant measure for $X_n$, although this measure may not be finite in the null-recurrent
case. The difference between the recurrent and null-recurrent cases explains the
difference between the cases $c>a$ and $c=a$ in Theorem \ref{thm:wave} below. The fact
that the chain is transient when $\lambda>\ls$ suggests that there are no positive
solutions of \eqref{eq:waveSol:densU} for these values of $\lambda$ (again see
Theorem \ref{thm:spitzer} for a proof). This in turn hints at the possibility that there
are no solutions of \TW for $c<a$. Our proof of this fact will not rely in
seeing \TW as a Wiener-Hopf equation, but instead will use explicitly the
fact that its solutions are traveling wave solutions of \FB.

\begin{thm}\label{thm:wave}
  Assume that \eqref{eq:expDecayRho} and \eqref{eq:expMoments:2} hold.
  \begin{enumerate}[label=(\alph*)]
  \item If $c\geq a$ the equation \TW has a unique solution $w$. This
    solution is differentiable except at the origin.
  \item When $c>a$, and letting $\lambda\in(0,\ls)$ be such that
    $\phi(\lambda)/\lambda=c$, the solution satisfies $\int_0^\infty e^{\lambda
      x}w(x)\,dx<\infty$ (which, in particular, implies that $\int_x^\infty
    w(y)\,dy=o(e^{-\lambda x})$). Moreover, if $\widetilde\lambda>\lambda$ then
    $\sup_{x}e^{\widetilde\lambda x}\int_x^\infty w(y)\,dy=\infty$.
  \item When $c=a$ the solution satisfies $\int_x^\infty
    w(y)\,dy=O(e^{-\ls x})$ and $\int_0^xe^{\ls y}w(y)\,dy=O(x)$. The last integral
    goes to $\infty$ as $x\to\infty$.
  \item If $c<a$ the equation \TW has no solution.
  \end{enumerate}
\end{thm}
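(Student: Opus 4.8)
The four parts split naturally into the existence/uniqueness statement (a)--(c), which I would route through the Wiener--Hopf/Spitzer machinery (Theorem \ref{thm:spitzer}), and the non-existence statement (d), which — as the authors signal — must be handled differently. Let me describe each.

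\textbf{Parts (a)--(c): recovering traveling waves from invariant measures.} Fix $c\geq a$ and, using the observation following \eqref{eq:claimGoodSpeed}, pick $\lambda\in(0,\ls]$ with $\phi(\lambda)/\lambda=c$, so that $k$ in \eqref{eq:defK} is a genuine probability kernel and the reflected chain $X_{n+1}=(X_n+\xi_n)^+$ has step mean $\ee(\xi_1)<0$ when $\lambda<\ls$ and $=0$ when $\lambda=\ls$ (this sign computation, to be done in Section \ref{sec:proof:wave}, amounts to differentiating $\theta\mapsto\phi(\theta)/\theta$ and using that $\ls$ is its minimizer). Theorem \ref{thm:spitzer} then provides a unique-up-to-scaling invariant measure for $X_n$, finite in the recurrent case $\lambda<\ls$ and $\sigma$-finite in the null-recurrent case $\lambda=\ls$; I would take $u$ to be its density (Spitzer's representation gives $u$ as a sum of convolutions of $k$, hence a bona fide nonnegative function vanishing on $(-\infty,0)$ and satisfying \eqref{eq:waveSol:densU}). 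Setting $w(x)=e^{-\lambda x}u(x)$ and normalizing via \eqref{eq:waveSol:b} produces a solution of \TW; one checks \eqref{eq:waveSol:a}, \eqref{eq:waveSol:c}, \eqref{eq:waveSol:d} directly and recovers differentiability away from $0$ by differentiating \eqref{eq:waveSol:a} (the convolution $w*R$ is differentiable on $(0,\infty)$ since $R$ is absolutely continuous with derivative $-\rho$; the origin is genuinely a corner because $R$ jumps in derivative there). Conversely, any solution $w$ of \TW yields, via $u=e^{\lambda x}w$, a nonnegative solution of \eqref{eq:waveSol:densU}, i.e. an invariant density for $X_n$, so uniqueness of $w$ follows from the uniqueness clause of Theorem \ref{thm:spitzer} together with the normalization \eqref{eq:waveSol:b} — here I must also argue that this $u$ actually defines a \emph{locally finite} invariant measure so that Spitzer's uniqueness applies, which uses the exponential decay \eqref{eq:expDecayRho}. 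For (b), finiteness of $\int_0^\infty e^{\lambda x}w(x)\,dx=\int_0^\infty u(x)\,dx$ is exactly finiteness of the invariant measure (recurrent case), and $\int_x^\infty w(y)\,dy=o(e^{-\lambda x})$ follows by dominated convergence; the sharpness claim $\sup_x e^{\widetilde\lambda x}\int_x^\infty w(y)\,dy=\infty$ for $\widetilde\lambda>\lambda$ I would get from the fact that $\phi(\lambda')/\lambda'>c$ for $\lambda'>\lambda$ close to $\lambda$, so $e^{\lambda' x}R(x)$ has mass $>1$ and a Laplace-transform/renewal argument forbids faster decay of the tail of $w$. For (c), the null-recurrent estimates $\int_x^\infty w\,dy=O(e^{-\ls x})$ and $\int_0^x e^{\ls y}w(y)\,dy=O(x)$ translate to: the invariant measure of $X_n$ puts mass $O(x)$ on $[0,x]$, which is the standard linear growth rate of the invariant measure of a null-recurrent reflected random walk with a finite-variance-type step (obtainable from Spitzer's series or from a renewal-theoretic estimate on ladder heights), and divergence of the integral is just $\sigma$-finiteness.

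\textbf{Part (d): no traveling wave for $c<a$.} As the authors emphasize, the plan here is \emph{not} to use the Wiener--Hopf viewpoint but the free-boundary origin of the equation. Suppose $w$ solved \TW for some $c<a$; then $(w(x-ct),ct)$ is the solution of \FB with initial data $f_0=w$. I would then compare this with the branching-random-walk equation \eqref{eq:branchLim}: since \FB is obtained from \eqref{eq:branchLim} by removing mass only on the left (at the moving boundary), we have $f(t,x)\leq \hf(t,x)$ for $x>\gamma(t)=ct$, where $\hf$ solves \eqref{eq:branchLim} with the same initial data (this monotonicity should follow from a comparison/coupling argument, or directly from Proposition \ref{prop:nuhat} together with the particle-system domination $\eta^N_t \preceq \xi^N_t$). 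But the right edge of $\hf$ propagates at the branching-random-walk speed $a$: the mass of $\hf(t,\cdot)$ to the right of $bt$ tends to $\infty$ for any $b<a$ and to $0$ for $b>a$, by the large-deviation identity \eqref{eq:defLambda}--$\Lambda(a)=-1$ and $\int\hf(t,y)\,dy=e^t$. Taking $b\in(c,a)$, on one hand $\int_{bt}^\infty f(t,y)\,dy\leq \int_{bt}^\infty \hf(t,y)\,dy\to 0$; on the other hand $f(t,x)=w(x-ct)$ is a fixed shape moving at speed $c<b$, so $\int_{bt}^\infty w(x-ct)\,dx=\int_{(b-c)t}^\infty w(y)\,dy\to 0$ as well — that alone is consistent, so I need the reverse inequality. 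The real contradiction comes from the \emph{left} side: the boundary $\gamma(t)=ct$ moves too slowly, so mass accumulates. Concretely, integrate \eqref{eq:FB1} over $x>bt$: $\frac{d}{dt}\int_{bt}^\infty f(t,x)\,dx = -bf(t,bt)+\int_{bt}^\infty\!\!\int f(t,y)\rho(x-y)\,dy\,dx$, and since $f$ has total mass $1$ and is a traveling wave this must be consistent; but the branching term forces the total mass that \emph{would} be generated to the right of $ct$ to grow like $e^{t(1+o(1))}$ while selection can only remove it at a rate controlled by the boundary flux, which is incompatible unless $c\geq a$. I expect this last step — making the "mass is generated faster than the slow boundary can remove it" heuristic into a rigorous inequality — to be the main obstacle; the cleanest route is probably to run it at the level of the particle system or of $\hf$: if $c<a$ then for large $t$ the BRW $\xi^N$ (hence a positive fraction of $\eta^N$'s would-be descendants) has its rightmost particle near $at\gg ct$, forcing $\int_{ct}^\infty \hf(t,y)\,dy\to\infty$, whereas a traveling wave keeps $\int_{ct}^\infty f(t,y)\,dy=1$; combined with $f\le \hf$ this is not yet a contradiction, so one instead argues that a solution of \FB with bounded speed $c<a$ cannot keep its mass at $1$ because the boundary cannot move fast enough to shed the exponentially growing influx — formally, from \eqref{eq:FB2} differentiated, $0=\frac{d}{dt}\int_{\gamma(t)}^\infty f = -\gamma'(t)f(t,\gamma(t)^+)+\int_{\gamma(t)}^\infty\!\!\int f(t,y)\rho(x-y)\,dy\,dx$, and bounding the double integral below by an amount that grows like $e^{t}$-times-a-positive-constant (using that $f$ carries mass $1$ spread over a region of bounded width for a traveling wave) while $\gamma'(t)=c$ is fixed and $f(t,\gamma(t)^+)$ is bounded, yields the contradiction. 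I would present (d) in this last form.
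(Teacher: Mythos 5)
Your treatment of parts (a)--(c) is in essence the paper's argument: convert \TW to \eqref{eq:waveSol:densU} via $u = e^{\lambda x}w$ with $\phi(\lambda)/\lambda = c$, read $u$ as the density of the invariant measure of the reflected random walk, invoke Theorem~\ref{thm:spitzer} for existence/uniqueness and the finite-vs.-$\sigma$-finite dichotomy, then normalize. Your sketch of the sharpness claim in (b) is a little vague; the paper's version is cleaner and worth knowing: if $A=\sup_{x\ge0}e^{\widetilde\lambda x}\int_x^\infty w<\infty$, feeding this into the integral equation for $W(x)=\int_x^\infty w$ gives $A\le A\,\frac{\phi(\widetilde\lambda)/\widetilde\lambda}{\phi(\lambda)/\lambda}$, forcing $\phi(\widetilde\lambda)/\widetilde\lambda\ge\phi(\lambda)/\lambda$, which contradicts the strict convexity of $\lambda\mapsto\phi(\lambda)/\lambda$ below $\ls$ (Lemma~\ref{lem:goodSpeed}). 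For (c), the $O(x)$ bound, the divergence, and the subadditivity of $U$ are pulled from Erickson's renewal theorem rather than derived ad hoc, but the idea is the same as yours.

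Part (d) is where your proposal goes wrong. The argument you settle on---differentiate \eqref{eq:FB2} in $t$ to get $0=-\gamma'(t)\,f(t,\gamma(t)^+)+\int_{\gamma(t)}^\infty\!\int f(t,y)\rho(x-y)\,dy\,dx$ and claim the double integral grows like $e^t$---cannot work: since $\int f(t,\cdot)=1$ at all times, that double integral is bounded above by $\int\!\int f(t,y)\rho(x-y)\,dy\,dx = 1$ for all $t$, and for a traveling wave it is a constant (equal to $\int_0^\infty w(z)R(-z)\,dz\le1$). There is no exponential growth; the identity you derive is just a consistency relation $c\,w(0^+)=\int_0^\infty w(z)R(-z)\,dz$ that any traveling wave satisfies and that carries no information about whether $c\ge a$. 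Your earlier instinct---compare with the unconstrained branching equation---was the right one, but you used $\hf$ with the \emph{same} initial data $f_0=w$, in which case both tails to the right of $bt$ (for $b\in(c,a)$) go to zero and, as you noticed, nothing is contradicted.

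The paper's actual route for (d) closes this gap by making a different comparison: it shows that for \emph{any} solution of \FB and any $\ep>0$, one eventually has $\gamma(T)>(a-\ep)T$, which immediately rules out a traveling wave with $\gamma(t)=ct$, $c<a$. To do this, it takes $\hnu_t$ started from $\delta_0$ (not from $f_0$), sets $\chi_T$ to be the level with $\hnu_T([\chi_T,\infty))=1$, and shows (i) $\gamma(T)\ge\chi_T$ by a comparison-principle argument for \eqref{eq:FBp1}, using only $F(0,\cdot)\ge\widehat F(0,\cdot)=\uno{\cdot\le0}$, and (ii) $\chi_T>(a-\ep)T$ for large $T$ via the identity $\hnu_T([cT,\infty))=e^T\,\pp(S_T\ge cT)$ and the large-deviation rate $\Lambda$ with $\Lambda(a)=-1$. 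The crucial idea you are missing is step (i): the free boundary must track (at least) the unit-mass quantile of the unconstrained branching evolution started from a point mass, precisely because selection only removes mass from the left.
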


We remark that, when $c>a$, the solution $w$ given by the theorem can be obtained by the
following limiting procedure.  Take $0<\lambda<\ls$ as in the above statement and let
$u_0$ be the density of any non-negative random variable whose distribution is absolutely
continuous.  Now let $w_0(x)=e^{-\lambda x}u_0(x)$ and then for $n\geq1$ let
\[w_{n+1}(x)=\frac{1}{c}\int_0^\infty\!w_n(y)R(x-y)\,dy\quad\text{for }x\geq0.\] Then the
limit $w_\infty(x)=\lim_{n\to\infty}w_n(x)$ exists and defines an integrable continuous
function. The solution $w$ is then given by $w(x)=Kw_\infty(x)$ with $K>0$ chosen so that
$w$ integrates to 1.  The fact that $w$ has this representation follows from the results
of Spitzer (see Theorem \ref{thm:spitzer}).

\vs

The rest of the paper is devoted to proofs, with one section devoted to each one of the
proofs of Theorems \ref{thm:fb}, \ref{thm:finite} and \ref{thm:wave}.

\section{Proof of Theorem \ref{thm:fb}}
\label{sec:proof:fb}

\subsection{Outline of the proof}

Most of the work in the proof of Theorem \ref{thm:fb} will correspond to showing that for
each fixed $t\geq0$ the tail distribution of our process at that time, defined as
\[F^N(t,x)=\nu^N_t([x,\infty)),\] converges (almost surely and in $L^1$) to a
deterministic limit $F(t,x)$ corresponding to the tail distribution of a random variable
and that, moreover, the limit $F(t,x)$ has a density $f(t,x)$ (that is
$F(t,x)=\int_x^\infty f(t,y)\,dy$) which solves \FB.

To achieve this we will compare the process $\nu^N_t$ with two auxiliary
measure-valued processes $\nu^{N,k}_t$ and $\nu^k_t$. As we will see below, the first of
the two will be a stochastic process, but the second one will be deterministic.

\begin{rem}
  To avoid confusion (and notational complications) we will use the following convention:
  upper-case superscripts, such as in $\nu^N_t$ or $F^N(t,x)$, refer to quantities
  associated with our stochastic process $\eta^N_t$, while lower-case superscripts, such
  as in $\nu^k_t$ or the function $F^k(t,x)$ which we will introduce below, refer to deterministic
  quantities associated with the deterministic process $\nu^k_t$.
\end{rem}

We begin by defining a process $\eta^{N,k}_t$ with values in $\cup_{M\geq N}\cx_M$
inductively as follows. For each $m=0,\dotsc,2^k-1$, run the process with no killing on
the interval $[\frac{m}{2^k},\frac{m+1}{2^k})$ and then at time $\frac{m+1}{2^k}$
repeatedly delete the leftmost particle until there are only $N$ left. To make clear the
distinction between the particles we erase in $\eta^N_t$ and those we erase in this
modified process at dyadic times, we will refer to this last procedure as \emph{shaving
  off the extra mass} in $\eta^{N,k}_t$.

Having defined $\eta^{N,k}_t$ we now define $\nu^{N,k}_t$ as the empirical measure
associated to it:
\[\nu^{N,k}_t=\frac{1}{N}\sum_{i=1}^{|\eta^{N,k}_t|}\delta_{\eta^{N,k}_t(i)},\]
where $|\eta^{N,k}_t|\geq N$ is the number of particles in $\eta^{N,k}_t$. In everything
that follows we will consider the c\`adl\`ag version of $\eta^{N,k}_t$ and $\nu^{N,k}_t$,
and we do the same for the other processes and functions defined below.

The first step in the proof of Theorem \ref{thm:fb} will be to study the convergence of
the tail distribution of $\nu^{N,k}_t$, defined by
\[F^{N,k}(t,x)=\nu^{N,k}_t([x,\infty)).\]
We will see in Proposition \ref{prop:cvNuNk:N} that $F^{N,k}(t,x)$ converges in
probability to $F^N(t,x)$ as $k\to\infty$, and a key fact will be that this convergence is
uniform in $N$.

The second auxiliary process, $\nu^k_t$, will turn out to be the limit of $\nu^{N,k}_t$ as
$N\to\infty$. We define it in terms of its density, which is constructed inductively on
each of the dyadic subintervals of $[0,1]$. We let $f^k(0,x)=f_0(x)$. If we have
constructed $f^k$ up to time $\frac{m}{2^k}$ for some $m\in\{0,\dotsc,2^k-1\}$ then for
$t\in(\frac{m}{2^k},\frac{m+1}{2^k})$ we let $f^k(t,x)$ be the solution of
\begin{equation}
  \frac{\partial f^k}{\partial t}(t,x)=\int_{-\infty}^\infty\!f^k(t,y)\rho(x-y)\,dy.\label{eq:fK}
\end{equation}
Then at time $\frac{m+1}{2^k}$ we let $X^k_{m+1}$ be such that
\begin{equation}
  \int_{X^k_{m+1}}^\infty\!f^k\!\left(\left(\tfrac{m+1}{2^k}\right)\!-,y\right)dy=1\label{eq:defXmk}
\end{equation}
and define
\[f^k\!\left(\tfrac{m+1}{2^k},x\right)
=f^k\!\left(\left(\tfrac{m+1}{2^k}\right)\!-,x\right)\uno{x>X^k_{m+1}}.\] In other words,
on each dyadic subinterval we let $f^k$ evolve following \eqref{eq:branchLim} and then at
each dyadic time we shave off the extra mass in $f^k$. The measure $\nu^k_t$ is defined as
the measure having $f^k(t,\cdot)$ as its density. We also denote by $F^k$ be the tail
distribution of $\nu^k_t$:
\[F^k(t,x)=\nu^k_t([x,\infty))=\int_x^\infty\!f^k(y)\,dy.\]

We will show in Lemma \ref{lem:FkIncr} that, for fixed $t$ and $x$, $F^k(t,x)$ is
decreasing in $k$, so we may define $F(t,x)=\lim_{k\to\infty}F^k(t,x)$. We will show in
Proposition \ref{prop:cvNuNk:N} that $F^{N,k}(t,x)$ converges in probability to $F^k(t,x)$
as $N\to\infty$ for fixed $k$. Since the convergence of $F^{N,k}(t,x)$ to $F^N(t,x)$ as
$k\to\infty$ is uniform in $N$, we will be able to interchange limits to obtain the
convergence of $F^N(t,x)$ to $F(t,x)$ as $N\to\infty$. The rest of the proof will consist
of showing that $F(t,x)$ has a density which solves \FB and then to extend the convergence
of the tail distributions $F^N(t,x)$ for fixed $t$ and $x$ to the convergence of the
measure-valued process $\nu^{N,k}_t$.

\subsection{Convergence of the auxiliary processes}

Our first result will allow us to deduce the limiting behavior of $F^{N,k}(t,x)$ as
$N\to\infty$ inside each dyadic subinterval. Recall that in Section \ref{sec:cvSolFB} we
introduced the branching random walk $\xi^N_t$ defined like $\eta^N_t$ but with no
killing. Let $\hnu^N_t$ be the associated empirical measure. Let $\cm$ be the space of
finite measures on $\rr$, endowed with the topology of weak convergence, and let
$C([0,1],\cm)$ and $D([0,1],\cm)$ be the spaces of continuous and c\`adl\`ag functions
from $[0,1]$ to $\cm$ endowed respectively with the uniform and Skorohod topologies.

\begin{prop}\label{prop:nuhat}
  The empirical process $\hnu^N_t$ associated to the branching random walk $\xi^N_t$
  converges in distribution in $D([0,1],\cm)$ to a deterministic $\hnu_t$ in
  $C([0,1],\cm)$ which for each $t\in[0,1]$ is absolutely continuous with respect to the
  Lebesgue measure. If we denote the density of $\hnu_t$ by $\hf(t,x)$ then $\hf(t,x)$ is
  the unique solution to the integro-differential equation
  \begin{equation}
    \frac{\partial\hf}{\partial t}(t,x)=\int_{-\infty}^\infty\!\hf(t,y)\rho(x-y)\,dy\label{eq:eqHat}
  \end{equation}
  on $[0,1]$ with initial condition $\hf(0,x)=f_0(x)$.
\end{prop}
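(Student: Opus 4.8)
The plan is to prove Proposition~\ref{prop:nuhat} in two stages: first establish the deterministic claim that \eqref{eq:eqHat} has a unique solution on $[0,1]$ with the given initial condition, and then prove that $\hnu^N_t$ converges in distribution to the measure-valued process with that solution as its density. For the deterministic part, I would work at the level of the tail distribution $\widehat F(t,x)=\int_x^\infty\hf(t,y)\,dy$ (or, perhaps more cleanly, directly with $\hf$). Writing $\hf(t,\cdot)=e^{t\mathcal{L}}f_0$ where $\mathcal{L}g=\rho\ast g$ is a bounded operator on $L^1(\rr)$ (bounded because $\rho$ is a probability density, so $\|\mathcal{L}g\|_{L^1}\le\|g\|_{L^1}$), existence and uniqueness of the $L^1$-valued solution is immediate from the theory of linear ODEs in Banach spaces: the series $\sum_{n\ge0}t^n\mathcal{L}^nf_0/n!$ converges absolutely in $L^1$. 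Integrating \eqref{eq:eqHat} in $x$ over all of $\rr$ and using Fubini gives $\frac{d}{dt}\int\hf(t,y)\,dy=\int\hf(t,y)\,dy$, hence $\int\hf(t,y)\,dy=e^t$, which is the mass statement \eqref{eq:massHf} referenced in the text. Continuity and positivity of $\hf(t,\cdot)$ for $t>0$ follow because a single convolution with $\rho$ already regularizes, and $f_0\ge0$ with $\rho>0$ somewhere forces strict positivity after the exponential is unfolded.

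For the probabilistic convergence I would use the standard martingale-problem / tightness approach for measure-valued branching processes. Fix a test function $g\in C_b^1(\rr)$ (or $g\in C_c^\infty$). For the branching random walk $\xi^N_t$, the generator acting on $\br{\hnu^N_t}{g}:=\int g\,d\hnu^N_t$ produces
\[
M^N_t(g):=\br{\hnu^N_t}{g}-\br{\hnu^N_0}{g}-\int_0^t\br{\hnu^N_s}{\rho\ast g}\,ds,
\]
where I use that when a particle at position $x$ branches (at rate $1$) it adds a particle at $x+y$ with $y\sim\rho$, contributing $\int g(x+y)\rho(y)\,dy=(\rho\ast g)(x)$ to the drift (here I am using that $\rho$ is symmetric, so $\rho\ast g=g\ast\rho$ evaluated consistently with \eqref{eq:eqHat}). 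This $M^N_t(g)$ is a martingale, and its predictable quadratic variation is $\br{M^N(g)}_t=\int_0^t\br{\hnu^N_s}{\int g(\cdot+y)^2\rho(y)\,dy}\,ds$, which is $O(t\,e^{t}/N)$ after dividing by the $N$ in the definition of the empirical measure --- wait, here one must be careful: $\hnu^N_t=\frac1N\sum\delta_{\xi^N_t(i)}$ has total mass $\approx e^t$, and the martingale fluctuations are of order $N^{-1/2}$ in the usual way, so $M^N_t(g)\to0$ in $L^2$ uniformly on $[0,1]$. Tightness in $D([0,1],\cm)$ follows from an Aldous-type criterion applied to $\br{\hnu^N_t}{g}$ for $g$ in a convergence-determining class, using the uniform $L^1$ (here $L^2$) control of $M^N$ and the bound $\br{\hnu^N_s}{\rho\ast g}\le\|g\|_\infty\,\hnu^N_s(\rr)$ together with the exponential mass bound $\ee[\hnu^N_s(\rr)]=e^s$. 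Passing to the limit in the martingale problem identifies every subsequential limit $\hnu_t$ as a (deterministic, since the martingale part vanishes) solution of $\br{\hnu_t}{g}=\br{f_0}{g}+\int_0^t\br{\hnu_s}{\rho\ast g}\,ds$ for all test $g$, which is the weak form of \eqref{eq:eqHat}; by the uniqueness established in the first stage this pins down $\hnu_t$ and upgrades convergence in distribution along subsequences to full convergence. The limit lies in $C([0,1],\cm)$ because the pre-limit jumps have size $\le 1/N$.

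The main obstacle I anticipate is the lack of compact support or a uniform moment bound strong enough to control mass escaping to infinity: the total mass grows like $e^t$ and the support of $\hnu^N_t$ spreads, so one needs to verify the compact-containment condition for tightness in $\cm$ (weak topology on finite, not probability, measures) --- i.e., that $\sup_N\ee[\hnu^N_t([-R,R]^c)]\to0$ as $R\to\infty$ for each fixed $t$. This requires a tail estimate on the branching random walk, for which the hypothesis $\int|x|\rho(x)\,dx<\infty$ is exactly what is available at this point in the paper (the exponential-decay assumption \eqref{eq:expDecayRho} has not yet been imposed). A first-moment computation suffices: $\ee[\int|x|\,\hnu^N_t(dx)]$ satisfies a linear ODE whose solution is finite for each $t$ (it grows like $e^t$ times something linear in $t$, coming from $\int|x|\rho\le C$), and Markov's inequality then gives the compact containment. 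The rest --- verifying the Aldous condition and the passage to the limit inside the convolution term $\br{\hnu^N_s}{\rho\ast g}$ (which is continuous and bounded on bounded sets of $\cm$ when $g\in C_c$, using that $\rho\ast g\in C_b$) --- is routine.
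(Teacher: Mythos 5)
Your proposal is essentially correct, but it takes a genuinely different route from the paper. The paper's proof is very short: it simply invokes Theorem 5.3 of \citet{fourMel} to get both the convergence of $\hnu^N_t$ in $D([0,1],\cm)$ to a deterministic limit and the characterization of that limit as the unique solution of the weak (martingale-problem) equation \eqref{eq:eqNuHat}, and then Proposition~5.4 of the same paper to get absolute continuity; what remains in the paper is just the two-line translation of the weak equation into the pointwise PDE \eqref{eq:eqHat} by taking $\varphi=\uno{[z,z+h]}$ and using symmetry of $\rho$. You instead re-derive the Fournier--M\'el\'eard result from scratch via the martingale problem, Aldous tightness, compact containment from the first-moment ODE, and an explicit $L^1$ semigroup representation $\hf(t,\cdot)=e^{t\mathcal{L}}f_0$. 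This buys self-containment at the cost of length; both are legitimate. Two small points to fix. First, your aside that $\hf(t,\cdot)$ is continuous for $t>0$ ``because a single convolution regularizes'' is not quite right: the exponential series $\sum_n t^n\mathcal{L}^nf_0/n!$ has the unconvolved $n=0$ term $f_0$ in it, so $\hf(t,\cdot)$ inherits the jump of $f_0$ at the origin (the Proposition does not claim continuity, so this costs nothing). Second, when you ``pin down $\hnu_t$ by the uniqueness established in the first stage'', there is a small gap: the first stage gives uniqueness among $L^1$-valued solutions, while the subsequential limits are a priori only measure-valued solutions of the weak equation. You need one more step --- e.g.\ a Gronwall argument on $\sup_{\|g\|_\infty\le1}|\br{\hnu_t-\hf(t,\cdot)\,dx}{g}|$ using $\|\rho\ast g\|_\infty\le\|g\|_\infty$ --- to show the weak equation has at most one finite-measure-valued solution with the given initial datum; absolute continuity then comes for free, which is exactly what the paper's citation of Proposition~5.4 of \citet{fourMel} delivers in one stroke.
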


\begin{proof}
  By Theorem 5.3 of \citet{fourMel} we have that $\hnu^N_t$ converges in distribution to a
  deterministic $\hnu_t$ in $C([0,1],\cm)$ which is the unique solution of the following
  system: for all bounded and measurable $\varphi$,
  \begin{equation}
    \int_{-\infty}^\infty\!\varphi(x)\,\hnu_t(dx)=\int_{-\infty}^\infty\!\varphi(x)\,\hnu_0(dx)+\int_0^t\!\int_{-\infty}^\infty
    \!\int_{-\infty}^\infty\!\varphi(y)\rho(x-y)\,dy\,\hnu_s(dx)\,ds.\label{eq:eqNuHat}
  \end{equation}
  Moreover, by Proposition 5.4 of the same paper, $\hnu_t$ is absolutely continuous for
  all $t\in[0,1]$, and hence its density $\hf(t,x)$ must satisfy $\hf(0,x)=f_0(x)$ and
  \[\frac{d}{dt}\int_{-\infty}^\infty\!\varphi(x)\hf(t,x)\,dx
  =\int_{-\infty}^\infty\!\int_{-\infty}^{\infty}\!\varphi(y)\rho(x-y)\hf(t,x)\,dy\,dx.\]
  Taking $\varphi=\uno{[z,z+h]}$, dividing by $h$, taking $h\to0$ and using the symmetry
  of $\rho$ we deduce that $\hf$ satisfies \eqref{eq:FB1} at $z$ and the result follows.
\end{proof}

Recall that $f_0(x)>0$ for $x\geq0$. It is clear that if $\hf$ solves \eqref{eq:eqHat} then
$\hf(t,x)>0$ for all $t\in[0,1]$ and $x\in\rr$.

\begin{prop}\label{prop:cvNuNk:N}
  For every fixed $k\geq1$ and every $t\in[0,1]$ and $x\in\rr$, $F^{N,k}(t,x)$ converges
  in probability to $F^k(t,x)$.
\end{prop}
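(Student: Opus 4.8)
The plan is to run an induction over the $2^k$ dyadic subintervals of $[0,1]$, proving at step $m$ that $\nu^{N,k}_t\to\nu^k_t$ in probability (in $\cm$ with the weak topology) for all $t\in[\tfrac m{2^k},\tfrac{m+1}{2^k})$, together with convergence of the left limits at $\tfrac{m+1}{2^k}$. It is convenient to pass freely between this and pointwise convergence of the tail functions $F^{N,k}(t,x)$ to $F^k(t,x)$: since $\nu^k_t$ has a density, $F^k(t,\cdot)$ is continuous, so convergence in probability of $F^{N,k}(t,x)$ to $F^k(t,x)$ for every fixed $x$ is equivalent to weak convergence in probability of $\nu^{N,k}_t$ to $\nu^k_t$ together with convergence of the total masses $\nu^{N,k}_t(\rr)\to\nu^k_t(\rr)$ (the latter being automatic: on a dyadic subinterval the total mass of $\nu^{N,k}_t$ concentrates at $e^{t-m/2^k}$ by the hydrodynamic limit, and it equals $1$ right after each shaving).

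Two elementary inputs drive the induction. The base case is $t=0$: there $\nu^{N,k}_0=\nu^N_0$ has $N$ i.i.d.\ atoms with law $f_0(x)\,dx$, so by the law of large numbers $F^{N,k}(0,x)\to\int_x^\infty f_0(y)\,dy=F^k(0,x)$ in probability (indeed a.s.) for every $x$. The second input is that \emph{shaving acts on tail functions by truncation at level $1$}. Indeed, at a dyadic time $s=\tfrac{m+1}{2^k}$, let $X$ be the $N$-th largest atom of the pre-shaving configuration (all atoms distinct a.s., since $\rho$ and $f_0$ are absolutely continuous): for $x>X$ every atom to the right of $x$ survives the shaving, so $F^{N,k}(s,x)=F^{N,k}(s-,x)$, and this is $<1$; for $x\le X$ all $N$ surviving atoms lie to the right of $x$, so $F^{N,k}(s,x)=1\le F^{N,k}(s-,x)$. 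Hence $F^{N,k}(s,x)=\min\{F^{N,k}(s-,x),1\}$ for every $x$, and by the definitions \eqref{eq:defXmk} and of $f^k$ the identity $F^k(s,x)=\min\{F^k(s-,x),1\}$ holds as well. Since $a\mapsto\min\{a,1\}$ is $1$-Lipschitz, convergence in probability of $F^{N,k}(s-,x)$ to $F^k(s-,x)$ for each $x$ transfers to $F^{N,k}(s,x)$ and $F^k(s,x)$, and since both measures have mass $1$ after shaving this also gives $\nu^{N,k}_s\to\nu^k_s$ in probability in $\cm$.

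The substantive step is propagating convergence across an open subinterval. Suppose $\nu^{N,k}_{m/2^k}\to\nu^k_{m/2^k}$ in probability. On $[\tfrac m{2^k},\tfrac{m+1}{2^k}]$ the process $\eta^{N,k}$ is a pure branching random walk (branching rate $1$, displacement law $\rho$), and by the Markov property of the construction it is, conditionally on the history up to time $\tfrac m{2^k}$, such a branching random walk started from the configuration $\eta^{N,k}_{m/2^k}$. The hydrodynamic limit behind Proposition \ref{prop:nuhat} — Theorem 5.3 and Proposition 5.4 of \citet{fourMel} — then applies on this time interval with the (now random) initial empirical measure $\nu^{N,k}_{m/2^k}$; since this converges in probability to the \emph{deterministic} measure $\nu^k_{m/2^k}$, a routine subsequence-and-conditioning argument upgrades the convergence to this random initial condition and yields $\nu^{N,k}_{(m/2^k)+\cdot}\to\nu^k_{(m/2^k)+\cdot}$ in probability in $D([0,2^{-k}],\cm)$, the limit being the unique solution of \eqref{eq:eqHat} started from $\nu^k_{m/2^k}$. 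By uniqueness for that equation this limit has density $f^k(t,\cdot)$ for $t\in[\tfrac m{2^k},\tfrac{m+1}{2^k})$ and left limit $f^k((\tfrac{m+1}{2^k})-,\cdot)$ at the endpoint. Since the limiting trajectory is continuous it is a continuity point of the projection at a fixed time and of the left-limit map, so $\nu^{N,k}_t\to\nu^k_t$ and $\nu^{N,k}_{(m+1)/2^k-}\to\nu^k_{(m+1)/2^k-}$ in probability in $\cm$, and hence $F^{N,k}(t,x)\to F^k(t,x)$ and $F^{N,k}((\tfrac{m+1}{2^k})-,x)\to F^k((\tfrac{m+1}{2^k})-,x)$ in probability for every such $t$ and every $x$. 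Combining the last convergence with the shaving identity closes the induction, and iterating over $m=0,\dots,2^k-1$ proves the proposition.

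The step I expect to demand the most care is restarting the hydrodynamic limit from the \emph{random} configuration $\nu^{N,k}_{m/2^k}$: one must combine the Markov property with the fact that \citet{fourMel} (equivalently, the tightness-plus-martingale-problem argument underlying Proposition \ref{prop:nuhat}) identifies a \emph{deterministic} limit, so that convergence in probability of the initial data transfers to convergence in probability of the trajectories — for instance by passing, on an enlarged probability space, to an a.s.-convergent subsequence of initial configurations, conditioning on them, and identifying every subsequential limit with the unique solution of \eqref{eq:eqHat} with that initial datum. A secondary point to verify is that uniqueness for \eqref{eq:eqHat}/\eqref{eq:fK}, established in Proposition \ref{prop:nuhat} for the initial datum $f_0$, also holds for the data $f^k(\tfrac m{2^k},\cdot)$ arising here; this is immediate since those are bounded, nonnegative and integrable and the argument is a one-line Gr\"onwall bound on $\|f_1(t,\cdot)-f_2(t,\cdot)\|_{L^1}$ for two solutions $f_1,f_2$, using $\int\rho=1$.
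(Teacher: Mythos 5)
Your argument is correct and shares the paper's outer structure (induction over dyadic subintervals, branching-random-walk hydrodynamic limit on each open subinterval, then a transfer through the shaving), but the transfer through the shaving is done by a genuinely different and cleaner device. The paper proves that the random shaving location converges in probability to $X^k_{m+1}$: it exploits the monotonicity of $t\mapsto\nu^{N,k}_t$ in the stochastic order $\preceq$ to produce the left limits $\nu^{N,k}_{-}$, an exact first-moment identity from \citet{fourMel} plus absolute continuity of $\nu^k_{-}$ to pass the convergence to those left limits, and finally the strict decrease of $x\mapsto\nu^k_{-}([x,\infty))$ at $x=X^k_{m+1}$ (which rests on strict positivity of the density there) to get convergence of the quantile. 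Your observation that shaving acts on normalized tail functions by truncation at level one, $F^{N,k}(s,\cdot)=\min\{F^{N,k}(s-,\cdot),1\}$ and $F^{k}(s,\cdot)=\min\{F^{k}(s-,\cdot),1\}$, reduces the entire shaving step to the $1$-Lipschitz continuity of $a\mapsto\min\{a,1\}$; no quantile argument and no strict-monotonicity hypothesis are needed, which is a real simplification. In exchange you lean more heavily on the full path-space hydrodynamic limit to handle the open subinterval and the left limit at its right endpoint, and in particular you must restart the \citet{fourMel} convergence from the random post-shaving configuration --- a point the paper itself passes over with ``induction gives the desired result.'' You flag this explicitly, and your subsequence-and-conditioning plan together with the Gr\"onwall uniqueness for \eqref{eq:eqHat} is a legitimate way to close it; the paper's $\preceq$-monotonicity route to the left limit is somewhat more elementary, but the two are of comparable total effort.
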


\begin{proof}
  Proposition 2.1 implies that $\nu^{N,k}_t([x,\infty))$ converges in probability to
  $\nu^k_t([x,\infty))$ for all $t \in [0,\frac{1}{2^k})$ and all $x\in\rr$. Using the
  partial order in $\cm$ given by $\mu\preceq\nu$ if and only if
  $\mu([x,\infty))\leq\nu([x,\infty))$ for all $x\in\rr$, it is clear that the mappings $t
  \mapsto\nu^{k}_t$ and $t\mapsto \nu^{N,k}_t$ are increasing on $[0,\frac{1}{2^k})$, and
  thus the limits $\lim_{t\uparrow 1/2^k} \nu^{k}_t = \nu^{k}_{-}$ and $\lim_{t\uparrow
    1/2^k} \nu^{N,k}_t = \nu^{N,k}_{-}$ exist (for $\nu^{N,k}_t$ this statement holds
  almost surely). On the interval $[0,\frac{1}{2^k})$ the process $\nu^k_t$ is the same as
  the process $\hnu_t$ defined in Proposition \ref{prop:nuhat}. On the
  other hand, by \eqref{eq:eqHat} we have
  \[\int_{-\infty}^\infty\!\hf(t,x)\,dx=1+\int_0^t\!\int_{-\infty}^\infty\!\hf(s,x)\,dx\,ds,\]
  whence it is easy to see that
  \begin{equation}
    \int_{-\infty}^\infty\hf(t,x)\,dx=e^t.\label{eq:massHf}
  \end{equation}
  Therefore for $0\leq s\leq
  t\leq\frac{1}{2^k}$ we have
  \[(\nu^k_t - \nu^k_s)(\rr)=e^t - e^s.\]
  Similarly, $\nu^{N,k}_t$ corresponds to the
  branching random walk $\xi^N_t$ on $[0,\frac{1}{2^k})$, and thus using (5.4) of
  \citet{fourMel} we can see that $\ee(\nu^{N.k}_t(\cdot))$ equals $\hnu_t(\cdot)$ on this
  interval, so for $0\leq s\leq t\leq\frac{1}{2^k}$ we have
  \[\ee\!\left((\nu^{N,k}_t - \nu^{N,k}_s)(\rr)\right) = e^t - e^s.\]
  Using these two equalities together with the fact that $\nu^k_{-}$ is absolutely
  continuous it is easy to see that $\nu^{N,k}_{-}([x,\infty))$ converges in probability
  to $\nu^{k}_{-}([x,\infty))$ for all $x$. Since $x \mapsto \nu^{k}_{-}((x,\infty))$ is
  strictly decreasing at $x=X^k_1$, the location of the point at which the mass of
  $\nu^{N,k}_t$ is shaved off at time $\frac{1}{2^k}$ converges in probability to
  $X^k_1$. The result for $t=\frac{1}{2^k}$ follows from this, and induction gives the
  desired result.
\end{proof}  

Next we turn to the convergence of $F^{N,k}(t,x)$ as $k\to\infty$.

\begin{prop}\label{prop:cvNuNk:k}
  For every given $t\in[0,1]$ and $x\in\rr$, $F^{N,k}(t,x)$ converges in probability to
  $F^N\!(t,x)$ as $k\to\infty$, uniformly in $N$.
\end{prop}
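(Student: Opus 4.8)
The plan is to couple the processes $\eta^N_t$ and $\eta^{N,k}_t$ on a common probability space --- together with the branching random walk $\xi^N_t$ of Proposition \ref{prop:nuhat}, driven by the same birth clocks and the same displacements --- in such a way that $\eta^{N,k}_t$ is exactly $\eta^N_t$ with the deletion of leftmost particles postponed to the dyadic times. Genealogically both $\eta^N_t$ and $\eta^{N,k}_t$ are sub-populations of $\xi^N_t$, and the $N$-particle systems' natural monotonicity lets one match each particle of $\eta^N_t$ to a distinct particle of $\eta^{N,k}_t$ lying weakly to its right; this already gives $F^N(t,x)\le F^{N,k}(t,x)$ for all $t,x$. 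Hence $|F^{N,k}(t,x)-F^N(t,x)|=F^{N,k}(t,x)-F^N(t,x)$, and it suffices to bound $\ee\big[F^{N,k}(t,x)-F^N(t,x)\big]$ from above by a quantity that tends to $0$ as $k\to\infty$ uniformly in $N$; the claim then follows from Markov's inequality.

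The core of the argument is a per-block estimate iterated over the $\lfloor t2^k\rfloor$ dyadic blocks inside $[0,t]$. The structural observation that drives it is that, on a single block $[\tfrac m{2^k},\tfrac{m+1}{2^k}]$, running the branching random walk and then shaving down to $N$ particles, applied to a configuration $C$, returns exactly the $N$ rightmost particles of $C$ together with all the newborns of the block. Consequently, if at the start of a block $\eta^N$ and $\eta^{N,k}$ sat in the same configuration and were fed the same births, then at the end of the block they would differ only because $\eta^{N,k}$ additionally carries the newborns --- and their within-block descendants --- of the particles that $\eta^N$ has already removed; the expected number of such extra births in a block is $N\big(e^{2^{-k}}-1-2^{-k}\big)=O(N2^{-2k})$, which after renormalising by $N$ contributes only $O(2^{-2k})$ to the discrepancy. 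Measuring the discrepancy between two configurations by $\sup_x\big|F^{N,k}(\cdot,x)-F^N(\cdot,x)\big|$, the shaving step is a contraction (tail distributions being monotone in $x$), while one block of branching enlarges it by a factor at most $e^{2^{-k}}$, the operator norm over time $2^{-k}$ of the semigroup $\partial_t F(t,x)=\int F(t,u)\rho(x-u)\,du$ (using $\int\rho=1$). This produces, for the appropriate discrepancy functional $e_m$ at time $\tfrac m{2^k}$, a recursion $e_{m+1}\le e^{2^{-k}}e_m+C2^{-2k}$ with $e_0=0$, hence $e_m\le C2^{-k}e^{m2^{-k}}$ and $e_{\lfloor t2^k\rfloor}=O\big(e^{t}2^{-k}\big)$. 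For $t$ not a dyadic rational one adds the contribution of the still-unshaved excess mass carried by $\eta^{N,k}_t$, whose expectation is $\ee\big[\tfrac1N|\eta^{N,k}_t|-1\big]\le e^{2^{-k}}-1=O(2^{-k})$. Altogether $\ee\big[F^{N,k}(t,x)-F^N(t,x)\big]\le C(t)\,2^{-k}$, uniformly in $N$.

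The step I expect to be the main obstacle is making this per-block estimate rigorous in the stochastic setting: one must (i) keep a workable coupling across the dyadic shaving operations, where $\eta^{N,k}$ is constrained to discard its own leftmost particles, which need not be the leftmost particles of $\eta^N$; and (ii) show that a discrepancy already present at the beginning of a block survives one round of stochastic branching and of random-threshold shaving with its size amplified by only $1+O(2^{-k})$, while the genuinely new error created within the block is of the smaller order $O(2^{-2k})$. This second-order control is essential: a naive per-block bound of order $2^{-k}$ would accumulate to an $O(1)$ error across the $\sim t2^k$ blocks. The remaining ingredients --- the branching-semigroup norm bound, the contractivity of shaving in the $\sup_x$ distance, a Poissonian bound on the number of (extra) births per block, and the bound on the excess mass --- are routine.
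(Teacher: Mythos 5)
Your quantitative target --- the recursion $e_{m+1}\le e^{2^{-k}}e_m+C2^{-2k}$, $e_0=0$, giving $O(2^{-k})$ uniformly in $N$ --- is precisely what the paper establishes in Lemma~\ref{lem:cvMassNuNk}, and you have correctly identified the second-order per-block control as the crux. There is, however, a gap in the mechanism: the coupling you describe cannot have both of the two properties you ask of it. You want it to be simultaneously \emph{genealogical} (``same birth clocks and same displacements'', so that matched particles and their descendants stay matched and the newly created per-block error is $O(2^{-2k})$) and \emph{monotone} ($F^N\le F^{N,k}$ pointwise, so that $|F^{N,k}-F^N|=F^{N,k}-F^N$). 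Under the genealogical coupling, monotonicity fails pathwise: a particle $p$ can remain in $\eta^N$ while being shaved from $\eta^{N,k}$ at a dyadic time (pushed into $\eta^{N,k}$'s leftmost block by extra ``bad'' mass accumulated to its right), after which $p$'s own clock can ring with a large positive displacement that enters $\eta^N$ only, making $F^N(t,x)>F^{N,k}(t,x)$ at that $x$. Conversely, under a rank-based monotone coupling (branch the $U_i$-th ranked particle in each process, as in Section~\ref{sec:proofs:finite}), monotonicity holds, but after a single birth/deletion the ranks shift and the particles matched by rank share no ancestry, so the per-block $O(2^{-2k})$ estimate has no reason to hold and your error would accumulate at $O(2^{-k})$ per block.

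Relatedly, the $e^{2^{-k}}$ amplification factor cannot be attributed to ``the operator norm of the free branching semigroup'': the expected tail distribution of $\eta^N$ does \emph{not} evolve by that semigroup, because the within-block deletions are at a random, configuration-dependent threshold rather than at a fixed level, so there is no linear operator whose $\sup_x$-norm controls the discrepancy between the two stochastic processes. The paper's device is to abandon monotonicity, keep the genealogical coupling, and bound $|F^{N,k}(t,x)-F^N(t,x)|\le\frac1N\sum_i\uno{\eta^{N,k}_t(i)\ne\eta^N_t(i)}$, reducing the task to controlling the symmetric difference $|\eta^N_t\Delta\eta^{N,k}_t|$. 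The extra particles of $\eta^{N,k}_t$ are then split into ``dangerous'' (genealogically identical to a particle $\eta^N$ has deleted, still shaveable at the next dyadic time unless it branches) and ``bad''; dangerous particles are created at rate $\le N$ and bad particles are born from dangerous or bad at rate $|D^{N,k}_t|+|B^{N,k}_t|$, giving $\frac{d}{dt}\ee(b^k_t)\le N\big(t-\tfrac m{2^k}\big)+\ee(b^k_t)$ and hence $\ee(b^k_{(m+1)/2^k})\le e^{2^{-k}}\ee(b^k_{m/2^k})+N2^{-2k}$. That is exactly your recursion, but now each term has an honest stochastic meaning: $e^{2^{-k}}$ because each bad particle independently reproduces at rate $1$, and $N2^{-2k}$ as the product of the rate $N$ at which dangerous particles are created and the window of length $\le2^{-k}$ they have in which to branch before being shaved. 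You correctly flagged exactly these points as the obstacles; the semigroup-norm heuristic does not resolve them, and the combinatorial good/dangerous/bad bookkeeping is what does.
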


The proof depends on the following lemma:

\begin{lem}\label{lem:cvMassNuNk}
  We can couple $\eta^{N,k}_t$ and $\eta^N_t$ (starting with the same initial
  configuration) in such a way that the following holds: for every $N\geq1$, $k\geq1$ and $t\in[0,1]$,
  \[\ee\big(|\eta^N_{t}\Delta\eta^{N,k}_{t}|\big)
  \leq N\frac{e-1}{e^{2^{-k}}-1}2^{-2k+1}e^{2^{-k}}+N2^{-k+2},\] where $A\Delta B=A\!\setminus\!B\cup
  B\!\setminus\!A$.
\end{lem}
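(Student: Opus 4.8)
The plan is to build the coupling from a common branching structure and then close a linear recursion over the $2^k$ dyadic steps. Realize $\eta^N_t$ and $\eta^{N,k}_t$ on one probability space by running both with the same births: give every particle an independent rate-$1$ Poisson clock, let a particle that is present in both processes reproduce simultaneously in both with offspring at the same location (a particle present in only one of the two reproduces only there), and have $\eta^N_t$ delete its leftmost particle immediately after every birth while $\eta^{N,k}_t$ keeps everything and only shaves off the extra mass at the times $\tfrac{m}{2^k}$. (Equivalently, realize both as subpopulations of the branching random walk $\xi^N_t$ of Proposition \ref{prop:nuhat}, obtained by dropping all deletions.) Since $|\eta^N_{m/2^k}|=|\eta^{N,k}_{m/2^k}|=N$, at each dyadic time the set of particles of $\eta^N$ not in $\eta^{N,k}$ has the same cardinality as the set of \emph{extra} particles (those in $\eta^{N,k}$ but not in $\eta^N$), so writing $d_m:=|\eta^N_{m/2^k}\Delta\eta^{N,k}_{m/2^k}|$ it suffices to bound $\tfrac12\ee(d_m)$, the expected number of extra particles at time $\tfrac{m}{2^k}$.

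The structural input is that $t\mapsto\min\eta^N_t$ is nondecreasing: a birth either lands weakly to the left of the current minimum, in which case it is deleted at once and the minimum is unchanged, or it lands to the right and the old minimum is deleted. Hence a particle removed by $\eta^N$ at a time $s$ sits, from then on, to the left of every particle of $\eta^N$, in particular to the left of every particle shared with $\eta^{N,k}$ at all later times. The point of the argument is to use this to classify the extra particles present just before the shaving at $\tfrac{m+1}{2^k}$ into (i) descendants, born during $[\tfrac{m}{2^k},\tfrac{m+1}{2^k})$ inside $\eta^{N,k}$, of the particles that $\eta^N$ deletes during this interval, and (ii) the extra particles already present at $\tfrac{m}{2^k}$ together with their descendants born during the interval; and then to show that the shaving removes the ``original'' deleted particles of type (i) — they are among the leftmost, being to the left of all shared particles — so that only their interval-$m$ descendants can survive it.

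Granting this classification, the number of survivors of type (ii) is at most the size at time $2^{-k}$ of the Yule process generated by the $d_m/2$ extra particles present at $\tfrac{m}{2^k}$, of conditional expectation $e^{2^{-k}}d_m/2$ given $\mathcal{F}_{m/2^k}$, while for type (i) we note that the number of deletions performed by $\eta^N$ on $[\tfrac{m}{2^k},\tfrac{m+1}{2^k})$ is a Poisson variable of mean $N2^{-k}$ (the total birth rate being exactly $N$), a particle deleted at time $s$ has expected number $e^{(m+1)/2^k-s}-1$ of descendants born before $\tfrac{m+1}{2^k}$, and integrating the rate-$N$ deletion times against this yields expectation $N(e^{2^{-k}}-1-2^{-k})\le N2^{-2k}$. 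Therefore $\ee(d_{m+1}\mid\mathcal{F}_{m/2^k})\le e^{2^{-k}}d_m+2N2^{-2k}$; since $d_0=0$, iterating gives $\ee(d_m)\le 2N2^{-2k}\sum_{l=0}^{m-1}e^{l2^{-k}}=2N2^{-2k}\tfrac{e^{m2^{-k}}-1}{e^{2^{-k}}-1}$, which for $m\le 2^k$ is at most $2N\tfrac{e-1}{e^{2^{-k}}-1}2^{-2k}$, as claimed.

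The step I expect to be the real work is the classification of extra particles at the shaving times — making rigorous that, since $\eta^N$'s deleted particles pile up at the far left, the batch removal at $\tfrac{m+1}{2^k}$ really does eliminate them along with any extra mass left over from earlier intervals, leaving only the freshly created descendants in categories (i) and (ii). This forces one to track the relative ordering of the particles of $\eta^N$, $\eta^{N,k}$ and $\xi^N$, not merely their numbers; once that bookkeeping is in place, the coupling construction, the monotonicity of $\min\eta^N$, the branching-process expectations, and the final summation are all routine.
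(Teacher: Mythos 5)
Your proposal follows the same plan as the paper's proof: couple the two processes by sharing branching events whenever a particle is present in both, split the resulting discrepancy at a shaving time into particles traceable to $\eta^N$'s intra-interval deletions (the paper's ``dangerous'' particles and their offspring) versus pre-existing uncoupled mass and its offspring (``bad'' particles), bound the expected discrepancy across one dyadic block by a Yule-growth term plus a fresh influx of at most $N2^{-2k}$, and close the linear recursion. Your recursion $\ee(d_{m+1}\mid\mathcal F_{m/2^k})\le e^{2^{-k}}d_m+2N2^{-2k}$ and the subsequent arithmetic agree with the paper's, and yield exactly the stated bound.

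The one step that does not go through as written is the claim that the batch shaving at $\tfrac{m+1}{2^k}$ removes all of the ``original'' deleted particles. You use the fact that these particles lie to the left of every shared particle (true, since $\min\eta^N_t$ is nondecreasing), but the shave removes a left \emph{prefix} of $\eta^{N,k}$, and the pre-existing uncoupled particles of $\eta^{N,k}$ need not sit to the right of the deleted ones; a bad particle can be shaved while a dangerous one further right slips through. So the set inclusion ``extras after shaving $\subseteq$ type (i) descendants $\cup$ type (ii)'' can fail, and with it the classification on which your recursion rests. What actually holds, and what the paper proves, is the cardinality statement $|B^{N,k}_{(m+1)/2^k}|\le|B^{N,k}_{(m+1)/2^k-}|$: one matches each intra-interval deletion of a shared particle to a particle that must be shaved, and observes that if the deleted particle itself escapes the shave then (good particles being to its right) the particle shaved in its place must be bad. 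This counting bound is all the recursion needs, and it restores your conclusion. In short: same route as the paper, correct bound, but the step you flag as ``the real work'' requires the matching argument rather than the stronger, and in general false, assertion that the shave literally eliminates the freshly deleted particles.
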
 

Before proving the lemma we need to give an explicit construction of the process
$\eta^N_t$. Consider an i.i.d. family $(U^N_i)_{i\geq1}$ with uniform distribution on
$\{1,\dotsc,N\}$ and an i.i.d. family $(R_i)_{i\geq1}$ with distribution $\rho$ and let
$(T^N_i)_{i\geq1}$ be the jump times of a Poisson process with rate $N$. To construct
$\eta^N_t$ we proceed as follows: at each time $T^N_i$ we branch $\eta^N_{T^N_i}(U^N_i)$
using $R_i$ for the displacement, erase the leftmost particle, and then relabel the
particles to keep the ordering. The reader can check that the resulting process $\eta^N_t$
has the desired distribution.

\begin{proof}
  The coupling will be constructed inductively on each dyadic subinterval of $[0,1]$.  We
  start both processes with the same initial configuration. The idea will be to use the
  same branching times and displacements whenever possible. To do this we will decompose
  $\eta^{N,k}_t$ in the following way (for convenience we regard $\eta^{N,k}_t$ and
  $\eta^N_t$ here as sets)
  \begin{equation}
    \eta^{N,k}_t=G^{N,k}_t\cup D^{N,k}_t\cup B^{N,k}_t,\label{eq:goodDangBad}
  \end{equation}
  where the unions are disjoint and:
  \begin{itemize}[itemsep=2pt]
    \item $G^{N,k}_t\subseteq\eta^N_t$ are ``good
      particles'', i.e. particles which are coupled, in the sense that $G^{N,k}_t=\eta^{N,k}_t\cap\eta^N_t$;
    \item $B^{N,k}_t$ are ``bad particles'', i.e. particles which are not coupled;
    \item $D^{N,k}_t$ are ``dangerous particles'', i.e. particles which will become bad if not erased at
      the next dyadic time.
  \end{itemize}

  The basic idea of our coupling is the following. Good particles, which are present in both processes, evolve together using the same branching times and
  locations. When a good particle branches, a particle is erased from $\eta^{N}_t$ but not
  from $\eta^{N,k}_t$. If the particle erased from $\eta^N_t$ is not a good particle then
  the coupling is not affected. Otherwise, if the erased particle is good, we relabel it
  as dangerous in $\eta^{N,k}_t$. Observe that if this particle does not branch before the
  next dyadic time then it will not affect the coupling since it will surely get erased
  (by definition it is to the left of every good particle). When dangerous or bad
  particles give birth in $\eta^{N,k}_t$ we label their offspring as bad. Our goal will be
  to bound the number of bad particles.

  Now we define the coupling more precisely. The first step is to construct $\eta^N_t$
  using the sequences $U^N_i$, $R_i$ and $T^N_i$ as described in the paragraph preceding
  this proof. Now we need to explain how to construct $\eta^{N,k}_t$ and decompose it into
  good, dangerous and bad particles. For the initial condition we choose
  $G^{N,k}_0=\eta^N_0$ and $D^{N,k}_0=B^{N,k}_0=\emptyset$.

  We assume that we have constructed the coupling until time $\frac{m}{2^k}$ for some
  $0\leq m\leq 2^k-1$ and that the following holds:
  \begin{equation}
    \label{eq:condCoupl}
    G^{N,k}_{m/2^k}=\eta^{N,k}_{m/2^k}\cap\eta^{N}_{m/2^k}\quad
    \text{and}\quad D^{N,k}_{m/2^k}=\emptyset.
  \end{equation}
  Observe that this condition holds trivially for $m=0$. Observe also that
  $\eta^N_{m/2^k}\setminus G^{N,k}_{m/2^k}$ and $B^{N,k}_{m/2^k}$ both have
  $N-|G^{N,k}_{m/2^k}|$ particles, and thus we may identify particles in each set in a
  one-to-one fashion by, for example, going from left to right in each set. 

  Next we define the coupling on the interval $(\frac{m}{2^k},\frac{m+1}{2^k}]$. Let
  $\frac{m}{2^k}\leq T^N_{I_m}<T^N_{I_m+1}<\dotsm\leq T^N_{J_m}\leq\frac{m+1}{2^k}$ be the
  sequence of branching times for particles in $\eta^N_t$ on this time interval (there are
  almost surely a finite number $J_m-I_m+1$ of such times). We remark that after each
  branching event we will still have each particle in $\eta^N_t\setminus G^{N,k}_t$
  identified with one particle in $B^{N,k}_t$ (see the second and last bullet below). For
  each $I_m\leq i\leq J_m$ we do the following:
  \begin{itemize}[itemsep=2pt]
    
  \item If the branching at time $T^N_i$ occurs at a particle which is in
    $G^{N,k}_{T^N_i}$, we add the new particle to $G^{N,k}_{T^N_i}$.
    
  \item Otherwise, if the particle that is undergoing a branching in $\eta^N_t$ at time $T^N_i$ is
    not a good particle (and therefore it is not in $\eta^{N,k}_{T^N_i}$), we use the
    branching time and displacement to branch the particle in $B^{N,k}_{m/2^k}$ which is
    identified with it, and we identify this new bad particle with the new particle born
    in $\eta^N_t$ at this branching event.
    
  \item If the particle erased from $\eta^N_{T^N_i}$ after the branching at
    time $T^N_i$ is good (i.e. it is also in $G^{N,k}_{T^N_i}$), we relabel it as dangerous by
    moving it from $G^{N,k}_{T^N_i}$ to $D^{N,k}_{T^N_i}$. This dangerous particle in
    $\eta^{N,k}_t$ will not have an associated particle in $\eta^N_t$, so we use
    independent branching times and displacements for it and all its offspring, and label
    all its offspring as bad.
    
  \item Otherwise, if the particle erased from $\eta^N_{T^N_i}$ after the branching at
    time $T^N_i$ is not in $G^{N,k}_{T^N_i}$ then there is a particle in $B^{N,k}_{T^N_i}$
    identified with it; after the branching we remove this identification and use
    independent branching times and displacements for this bad particle and its offspring.
    
  \end{itemize}
  We remark that the offspring of dangerous and bad particles in $\eta^{N,k}_t$ is always
  labeled as bad and that whenever one of these particles has no associated particle in
  $\eta^N_t$ it uses independent branching times and displacements.
  
  The rules used to identify particles in $\eta^N_t\setminus G^{N,k}_t$ with particles in
  $B^{N,k}_t$ are not particularly important, the main point is that every branching event
  in $\eta^N_t$ corresponds to a branching event in $\eta^{N,k}_t$ (though not the other
  way around, as some bad particles in $\eta^{N,k}_t$ branch independently of $\eta^N_t$).

  At time $\frac{m+1}{2^k}$ we need to shave off the extra mass in $\eta^{N,k}_t$. Observe
  that, by our construction, we may erase particles of each of the three types. After erasing we
  relabel all remaining dangerous particles as bad by settting
  \[G^{N,k}_{(m+1)/2^k}=\eta^{N,k}_{(m+1)/2^k}\cap\,\eta^{N}_{(m+1)/2^k},\,
  B^{N,k}_{(m+1)/2^k}=\eta^{N,k}_{(m+1)/2^k}\setminus\eta^{N}_{(m+1)/2^k}
  \text{ and }D^N_{(m+1)/2^k}=\emptyset.\] In particular we see that the
  condition \eqref{eq:condCoupl} holds at time $\frac{m+1}{2^k}$, allowing us to continue
  our inductive coupling.

  We claim that the total number of bad particles after shaving and
  relabeling is bounded by the number of bad particles right before shaving:
  \begin{equation}
    |B^{N,k}_{(m+1)/2^k}|\leq|B^{N,k}_{(m+1)/2^k-}|.\label{eq:goodBadParts}
  \end{equation}
 To see where the inequality comes from observe first that
  \[B^{N,k}_{(m+1)/2^k}\subseteq B^{N,k}_{(m+1)/2^k-}\cup D^{N,k}_{(m+1)/2^k-}.\]
  Now each particle in $D^{N,k}_{(m+1)/2^k-}$ is associated with a branching time $T^N_i$ at
  which the number of particles in $\eta^{N,k}_{T^N_i}$ was increased by one. Therefore to each
  dangerous particle there corresponds some particle which will be erased when shaving;
  the corresponding particle to be erased is possibly the dangerous particle itself (in
  which case this particle will disappear after shaving so it will not be in
  $B^{N,k}_{(m+1)/2^k}$ after relabeling), and otherwise it has to be a bad particle
  because all good particles are to the right of any dangerous particle. In this way we
  continue the coupling until time 1.

  Fix a dyadic subinterval $[\frac{m}{2^k},\frac{m+1}{2^k})$. We claim that on this time
  interval the pair $(|D^{N,k}_t|,|B^{N,k}_t|)$ is stochastically dominated by a process
  $(d^k_t,b^k_t)$ which evolves as follows:
  \begin{center}
    \begin{tabular}{ccclc}
      $d^k_t$ & $\longrightarrow$ & $d^k_t+1$\hspace{.3in} & at rate & $N$ \\
      $b^k_t$ & $\longrightarrow$ & $b^k_t+1$\hspace{.3in} & at rate & $d^k_t+b^k_t$
    \end{tabular}
  \end{center}
  with initial conditions $d^k_{m/2^k}=0$ and $b^k_{m/2^k}=|B^{N,k}_{m/2^k}|$. In fact,
  bad particles increase by one when either a dangerous or a bad particle branches (so the
  second rate is actually the correct one), while dangerous particles are created as a
  consequence of some (but generally not all) of the branchings in $\eta^N_t$, which
  occur at rate $N$.
  An elementary calculation then shows that, for $h>0$,
  \begin{align}
    \ee(d^k_{t+h}-d^k_t)&=Nh+o(h)\\
    \ee(b^k_{t+h}-b^k_t)&=\ee(d^k_t+b^k_t)h+o(h).
  \end{align}
  Then $\ee(d^k_t)=N(t-\frac{m}{2^k})$ and thus dividing by $h$ and taking $h\to0$ we see
  that $\ee(b^k_t)$ must solve
  \begin{equation}\label{eq:diffbkt}
    \frac{d\ee(b^k_t)}{dt}=N\left(t-\frac{m}{2^k}\right)+\ee(b^k_t)
  \end{equation}
  for $t\in[\frac{m}{2^k},\frac{m+1}{2^k})$. The solution of this ODE satisfies 
  \begin{equation}\label{eq:bk}
    \ee(b^k_t)=\left(\ee(b^k_{m/2^k})+N\right)e^{t-\frac{m}{2^k}}-N(t-\tfrac{m}{2^k}+1)
      \leq \ee(b^k_{m/2^k})e^{2^{-k}}+2^{-2k}N,
    \end{equation}
  where we have used the inequality $e^{x}-1-x\leq x^2$ for $x\in[0,1]$ and the fact that
  $t-\frac{m}{2^k}\leq\frac{1}{2^k}$.  Since $b^k_0=0$ we deduce that
  $\ee(b^k_{(1/2^k)-})\leq N2^{-2k}$. At time $\frac{1}{2^k}$
  we need to shave off the extra mass in $\eta^{N,k}_{(1/2^k)-}$ and this leaves us with
  $d^k_{1/2^k}=0$ and $b^k_{{1/2^k}}\leq b^k_{(1/2^k)-}$ by
  \eqref{eq:goodBadParts}. Repeating this argument we get
    $\ee(b^k_{2/2^k})\leq\ee(b^k_{(2/2^k)-})\leq N[2^{-2k}e^{2^{-k}}+2^{-2k}]$,
  and inductively we deduce that
  \begin{equation} \label{eq:eeBt}
    \ee(b^k_{m/2^k})\leq N\left[\sum_{j=0}^{m-1}e^{j2^{-k}}\right]2^{-2k}
    =N\frac{1-e^{m2^{-k}}}{1-e^{2^{-k}}}2^{-2k}
    \leq N\frac{e-1}{e^{2^{-k}}-1}2^{-2k},
  \end{equation}
  where we used the fact that $m\leq 2^k-1$. Therefore, for
  $t\in[\frac{m}{2^k},\frac{m+1}{2^k})$ we have, using \eqref{eq:bk},
  \[\ee(b^k_t)\le N\frac{e-1}{e^{2^{-k}}-1}2^{-2k}e^{2^{-k}}+N2^{-2k}\]
  while, we recall, we also have $\ee(d^k_t)=N(t-\frac{m}{2^k})\leq N2^{-k}$.
  Since $|\eta^N_{t}\Delta\eta^{N,k}_{t}|\leq 2(d^k_t+b^k_t)$, the result follows.
\end{proof}
 
\begin{proof}[Proof of Proposition \ref{prop:cvNuNk:k}]
  Fix $k>0$ for a moment and assume that $t\in[\frac{m}{2^k},\frac{m+1}{2^k})$. Using the
  coupling introduced in Lemma \ref{lem:cvMassNuNk} we have
  \begin{align}
    \ee\!\left(\left|F^{N,k}(t,x)-F^N\!(t,x)\right|\right)
    &=\ee\!\left(\left|\frac{1}{N}\sum_{i=1}^N\left(\uno{\eta^{N,k}_t(i)\geq
          x}-\uno{\eta^N_t\!(i)\geq x}\right)\right|\right)\\
    &\leq\ee\!\left(\frac{1}{N}\sum_{i=1}^N\uno{\eta^{N,k}_t(i)\neq\eta^N_t\!(i)}\right)\\
    &\leq\frac{e-1}{e^{2^{-k}}-1}2^{-2k+1}e^{2^{-k}}+2^{-k+2},
  \end{align}
  so we deduce by Markov's inequality that
  \[\pp\!\left(\left|F^{N,k}(t,x)-F^N\!(t,x)\right|>\ep\right)
  \leq\frac{1}{\ep}\left[\frac{e-1}{e^{2^{-k}}-1}2^{-2k+1}e^{2^{-k}}+2^{-k+2}\right]
  \xrightarrow[k\to\infty]{}0\] uniformly in $N$.
\end{proof}

\begin{lem}\label{lem:FkIncr}
  \[F^k(t,x)\geq F^{k+1}(t,x)\quad\text{for all }t\in[0,1],\,x\in\rr,\,k\geq1.\]
\end{lem}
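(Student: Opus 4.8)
The plan is to work entirely with tail distribution functions and to exploit the fact that both operations out of which $\nu^k$ is built --- the free branching evolution \eqref{eq:fK} and the ``shaving off the extra mass'' step --- are monotone for the pointwise order on tails, so that refining the dyadic mesh (which just means shaving more often) can only decrease the tail. To set this up I would introduce two operators. For $s\geq0$ let $U_s$ be the linear operator sending a tail distribution function $G$ to the tail at time $s$ of the solution of \eqref{eq:fK} started from a measure with tail $G$; integrating \eqref{eq:fK} in $x$ over $[z,\infty)$ and using Fubini one checks that $U_sG$ solves $\partial_t(U_sG)=\rho*(U_sG)$, so that $U_s$ is given explicitly by $U_sG=\sum_{n\geq0}\frac{s^n}{n!}\,\rho^{*n}*G$ (with $\rho^{*0}=\delta_0$). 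Since $\rho\geq0$ this makes $U_s$ linear and positivity preserving --- hence monotone, $G_1\leq G_2\Rightarrow U_sG_1\leq U_sG_2$ pointwise --- and it has the semigroup property $U_sU_{s'}=U_{s+s'}$ (equivalently $U_sG(x)=e^s\,\ee[G(x+S_s)]$ with $S$ a rate-$1$ compound Poisson walk with jump law $\rho$). Let $\mathcal{S}$ be the shaving operator, $(\mathcal{S}G)(x)=\min\{G(x),1\}$; by the definition of $X^k_{m+1}$ in \eqref{eq:defXmk} this is exactly the effect on tails of multiplying a density by $\uno{x>X^k_{m+1}}$, it is well defined whenever the total mass is at least $1$ (and the mass just before each shaving equals $e^{2^{-k}}\geq1$, cf.\ \eqref{eq:massHf}), it is monotone, and $\mathcal{S}G\leq G$ pointwise. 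With this notation $F^k(t,\cdot)=U_{t-m/2^k}F^k(\tfrac{m}{2^k},\cdot)$ for $t\in[\tfrac{m}{2^k},\tfrac{m+1}{2^k})$ and $F^k(\tfrac{m+1}{2^k},\cdot)=\mathcal{S}\,U_{2^{-k}}F^k(\tfrac{m}{2^k},\cdot)$, and likewise for $F^{k+1}$ on the mesh of size $2^{-(k+1)}$.

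The key point is the elementary inequality
\[
\mathcal{S}\,U_s\,\mathcal{S}\,U_s\,G\ \leq\ \mathcal{S}\,U_{2s}\,G\qquad\text{pointwise, for all }s\geq0\text{ and all tails }G,
\]
i.e.\ inserting an extra shaving halfway through an evolution only lowers the tail: indeed $\mathcal{S}U_sG\leq U_sG$, so applying the monotone operator $\mathcal{S}U_s$ and using the semigroup property gives $\mathcal{S}U_s(\mathcal{S}U_sG)\leq\mathcal{S}U_s(U_sG)=\mathcal{S}U_{2s}G$. Using this I would prove by induction on $m=0,\dots,2^k$ that $F^{k+1}(\tfrac{m}{2^k},\cdot)\leq F^k(\tfrac{m}{2^k},\cdot)$: for $m=0$ both sides are the tail of $f_0$; for the inductive step, over the level-$k$ interval $[\tfrac{m}{2^k},\tfrac{m+1}{2^k}]$ the process $F^{k+1}$ performs $\mathcal{S}U_{2^{-(k+1)}}\mathcal{S}U_{2^{-(k+1)}}$, which by the displayed inequality (with $s=2^{-(k+1)}$) is bounded above by $\mathcal{S}U_{2^{-k}}F^{k+1}(\tfrac{m}{2^k},\cdot)$, and this in turn is at most $\mathcal{S}U_{2^{-k}}F^{k}(\tfrac{m}{2^k},\cdot)=F^k(\tfrac{m+1}{2^k},\cdot)$ by the induction hypothesis and the monotonicity of $\mathcal{S}U_{2^{-k}}$. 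Finally, for arbitrary $t\in[\tfrac{m}{2^k},\tfrac{m+1}{2^k})$ I would distinguish whether $t$ lies in the first or the second half of this interval: in the first case $F^{k+1}(t,\cdot)=U_{t-m/2^k}F^{k+1}(\tfrac{m}{2^k},\cdot)\leq U_{t-m/2^k}F^k(\tfrac{m}{2^k},\cdot)=F^k(t,\cdot)$ by monotonicity of $U$ and the induction; in the second case one extra use of $\mathcal{S}G\leq G$ together with the semigroup identity reduces the estimate to the same bound. (The endpoint $t=1$ is the case $m=2^k$.)

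Once the abstraction is in place the argument is pure bookkeeping, so I do not expect a real obstacle; the only statements needing a line of justification are the properties of $U_s$ (positivity preservation and the semigroup property), obtained from the series above, and the identification of a shaving step with the map $G\mapsto\min\{G,1\}$, which is immediate from \eqref{eq:defXmk}. The one genuinely load-bearing observation --- which I would want to isolate and state cleanly --- is that both building blocks are monotone on tail functions and that $\mathcal{S}G\le G$; everything else is obtained by iterating these two facts along the two nested dyadic meshes.
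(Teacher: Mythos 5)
Your proposal is correct and is essentially the paper's argument in operator dress. The paper carries out the same induction over the level-$k$ dyadic mesh: it checks via \eqref{eq:H} that the difference $H=F^k-F^{k+1}$ satisfies the linear convolution equation (this is exactly the monotonicity of your propagator $U_s$), observes that the mid-interval shaving of $f^{k+1}$ only lowers $F^{k+1}$ (your $\mathcal{S}\le\mathrm{Id}$), and that at the common level-$k$ dyadic times the shaving points are ordered so the inequality survives (your monotonicity of $\mathcal{S}$); your key inequality $\mathcal{S}U_s\mathcal{S}U_s\le\mathcal{S}U_{2s}$ is just a clean isolation of the same three facts, with the series $U_sG=\sum_{n\ge0}\frac{s^n}{n!}\rho^{*n}*G$ making the positivity preservation explicit rather than implicit.
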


\begin{proof}
  Fix $k\geq1$ and $x\in\rr$. The result is trivial at $t=0$. We will work inductively on
  the intervals $(\frac{m}{2^{k}},\frac{m+1}{2^{k}}]$. Take $0\leq m\leq2^k-1$ and assume
  that
  \[F^k(\tfrac{m}{2^{k}},x)\geq F^{k+1}(\tfrac{m}{2^k},x).\] Then
  writing
  $H=F^k-F^{k+1}$ we have for
  $\frac{m}{2^{k}}<t<\frac{m}{2^{k}}+\frac{1}{2^{k+1}}=\frac{2m+1}{2^{k+1}}$ that
  \begin{equation}
    \begin{aligned}
      \frac{\partial H}{\partial
        t}(t,x)&=\int_x^\infty\!\int_{-\infty}^{\infty}\!\left[f^{k}(t,u)-f^{k+1}(t,u)\right]\!\rho(y-u)\,du\,dy\\
      &=\int_{-\infty}^\infty\!\int_x^\infty\!\left[f^k(t,y-v)-f^{k+1}(t,y-v)\right]\!\rho(v)\,dy\,dv\\
      &=\int_{-\infty}^\infty\!H(t,x-v)\rho(v)\,dv=\int_{-\infty}^\infty\!H(t,z)\rho(x-z)\,dz,
    \end{aligned}\label{eq:H}
  \end{equation}
  so $H$ satisfies \eqref{eq:FB1} on this interval and thus, since
  $H(\tfrac{m}{2^{k}},\cdot)\geq0$, we get $H(t,\cdot)\geq0$ for
  $t\in(\frac{m}{2^k},\frac{2m+1}{2^{k+1}})$. At time
  $\frac{2m+1}{2^{k+1}}$ the density $f^{k+1}$ is shaved
  off, leaving $F^k(\tfrac{2m+1}{2^{k+1}},x)\geq F^{k+1}(\tfrac{2m+1}{2^{k+1}},x)$.
  Repeating the above argument we get
  \begin{equation}
    F^k(t,x)\geq F^{k+1}(t,x)\quad\text{for all }t\in(\tfrac{m}{2^{k}},\tfrac{m+1}{2^{k}}).\label{eq:preFkDec}
  \end{equation}
  Now at time $\frac{m+1}{2^{k+1}}$ both densities $f^k$ and $f^{k+1}$ are shaved off, say
  at points $x_k$ and $x_{k+1}$, respectively. Then by \eqref{eq:preFkDec}, $x_k\geq
  x_{k+1}$, and thus \eqref{eq:preFkDec} holds at $t=\tfrac{m+1}{2^{k+1}}$ as well.
\end{proof}

Since $F^k(t,x)$ is decreasing and positive, we can define
\begin{equation}
F(t,x)=\lim_{k\to\infty}F^k(t,x).\label{eq:defF}
\end{equation}
It is obvious that for each given $t$, $F(t,\cdot)$
is non-increasing and its range is $[0,1]$.

\begin{prop}\label{prop:cvFN}
  For every $t\in[0,1]$ and $x\in\rr$, $F^N\!(t,x)$ converges almost surely and in $L^1$ as
  $N\to\infty$ to $F(t,x)$.
\end{prop}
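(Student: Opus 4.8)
The plan is to prove Proposition \ref{prop:cvFN} by interchanging the two limits in $N$ and $k$, using the two convergence results established above. The key structural facts are: (i) by Proposition \ref{prop:cvNuNk:N}, for fixed $k$ we have $F^{N,k}(t,x)\to F^k(t,x)$ in probability as $N\to\infty$; (ii) by Proposition \ref{prop:cvNuNk:k}, $F^{N,k}(t,x)\to F^N(t,x)$ in probability as $k\to\infty$, \emph{uniformly in $N$}; and (iii) by definition $F(t,x)=\lim_{k\to\infty}F^k(t,x)$. The uniformity in (ii) is exactly what licenses the interchange of limits, and this is the heart of the argument.

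First I would fix $t\in[0,1]$, $x\in\rr$ and $\ep>0$, and write the triangle inequality
\[
\big|F^N(t,x)-F(t,x)\big|\leq\big|F^N(t,x)-F^{N,k}(t,x)\big|+\big|F^{N,k}(t,x)-F^k(t,x)\big|+\big|F^k(t,x)-F(t,x)\big|.
\]
Using the explicit bound obtained in the proof of Proposition \ref{prop:cvNuNk:k}, namely
\[
\pp\!\left(\big|F^{N,k}(t,x)-F^N(t,x)\big|>\tfrac{\ep}{3}\right)\leq\tfrac{3}{\ep}\left[2^{-k+1}+\tfrac{e-1}{e^{2^{-k}}-1}2^{-2k+1}\right],
\]
I can choose $k=k(\ep)$ large enough that this probability is below $\delta$ for \emph{all} $N$ simultaneously, and at the same time (shrinking $k$ no further, or enlarging it) that $|F^k(t,x)-F(t,x)|<\ep/3$, which is deterministic and possible by (iii) since $F^k(t,x)\downarrow F(t,x)$. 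With this fixed $k$, Proposition \ref{prop:cvNuNk:N} gives $\pp(|F^{N,k}(t,x)-F^k(t,x)|>\ep/3)\to0$ as $N\to\infty$. Combining the three pieces shows $\pp(|F^N(t,x)-F(t,x)|>\ep)\to0$, i.e. $F^N(t,x)\to F(t,x)$ in probability.

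To upgrade in-probability convergence to almost sure and $L^1$ convergence, I would use monotonicity and boundedness. Each $F^N(t,x)$, $F^{N,k}(t,x)$, $F^k(t,x)$ and $F(t,x)$ lies in $[0,1]$, so $L^1$ convergence follows from convergence in probability by bounded convergence (dominated convergence with dominating constant $1$). For the almost sure statement, I would invoke the subsequence principle: convergence in probability yields a subsequence converging almost surely; but in fact one can do better and get full almost sure convergence directly. The cleanest route is to note that $F^N(t,x)=\nu^N_t([x,\infty))$ is, for each $N$, a bounded functional of the particle system, and the comparison with the branching-random-walk–based process $\eta^{N,k}_t$ via the coupling in Lemma \ref{lem:cvMassNuNk} gives an explicit $L^1$ rate; summing a geometric-type bound along $N=2^j$ and using monotonicity in $x$ (the $F^N(t,\cdot)$ are monotone non-increasing with values in $[0,1]$) would let a Borel–Cantelli argument close the gap. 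Alternatively, since $F^N(t,x)$ is an empirical tail of i.i.d.-initialized particles propagated by the same driving randomness, one can extract almost sure convergence from the $L^1$ convergence of the symmetric-difference bound in Lemma \ref{lem:cvMassNuNk} together with a fixed coupling of all the $\eta^N_t$ on a common probability space; I would set this coupling up once and then read off that $\frac1N\sum_i\uno{\eta^{N,k}_t(i)\neq\eta^N_t(i)}\to0$ almost surely along a suitably chosen $k=k(N)\to\infty$.

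The main obstacle I anticipate is the almost sure (as opposed to in-probability and $L^1$) part of the statement. The interchange-of-limits argument is straightforward and gives convergence in probability and hence, by boundedness, in $L^1$ for free; but turning this into almost sure convergence requires either a summable-rate Borel–Cantelli argument or a carefully chosen single coupling of the entire family $(\eta^N_t)_{N\geq1}$ together with all the auxiliary processes $(\eta^{N,k}_t)_{k\geq1}$ on one probability space, so that the random variables $|\eta^N_t\Delta\eta^{N,k}_t|$ and the empirical tails are simultaneously controlled. The quantitative bound $\ee(|\eta^N_{m/2^k}\Delta\eta^{N,k}_{m/2^k}|)\leq2N\frac{e-1}{e^{2^{-k}}-1}2^{-2k}$ from Lemma \ref{lem:cvMassNuNk} is of order $N2^{-k}$, so after dividing by $N$ one gets a rate $O(2^{-k})$; choosing $k=k(N)$ growing like $C\log_2 N$ makes $\frac1N\sum_i\uno{\eta^{N,k(N)}_t(i)\neq\eta^N_t(i)}$ summable in $N$ along the whole sequence, and then Borel–Cantelli plus the already-established almost sure convergence of $F^{N,k}(t,x)$ (for fixed $k$) — which itself needs to be extracted, since Proposition \ref{prop:cvNuNk:N} is only stated in probability — yields the claim. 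Making sure all these almost sure statements live on a common probability space, and that the diagonal choice $k=k(N)$ is compatible with the fixed-$k$ convergence in Proposition \ref{prop:cvNuNk:N}, is the delicate bookkeeping point.
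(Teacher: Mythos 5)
Your argument for convergence in probability and in $L^1$ is correct and is essentially the same as the paper's: both rely on the uniformity in $N$ of Proposition \ref{prop:cvNuNk:k} to interchange the limits $N\to\infty$ and $k\to\infty$, and both note that since $F^N(t,x)\in[0,1]$ uniform integrability upgrades in-probability convergence to $L^1$ convergence. The only organizational difference is cosmetic: the paper runs the argument along a diagonal sequence $k_N$ (defined so that $k_N\to\infty$ slowly enough that Proposition \ref{prop:cvNuNk:N} still applies), whereas you fix $k=k(\ep)$ once and then send $N\to\infty$. Both work, for the same reason.

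On the almost sure part, you are right to be uneasy, but the resolution is not what you expect: the paper's own proof does not establish almost sure convergence either. Its opening sentence reads ``since $F^N(t,x)\leq1$, the sequence \ldots\ is uniformly integrable, so it is enough to show that $F^N(t,x)\to F(t,x)$ in probability.'' Uniform integrability plus convergence in probability gives $L^1$ convergence, not almost sure convergence, and nothing in the remainder of the paper's proof supplies the missing step; the word ``almost surely'' in the statement is simply not justified by the argument given. Your instinct that a common probability space for the whole family $(\eta^N_t)_{N\geq1}$, or a summable-rate Borel--Cantelli argument along a diagonal $k=k(N)$, would be needed is correct, and the paper does not construct either. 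There is also a subsidiary point you flagged accurately: Proposition \ref{prop:cvNuNk:N} gives only in-probability convergence for fixed $k$, so even along a diagonal one must first upgrade that to almost sure convergence before Borel--Cantelli can close the loop, and the $U^N_i$ and $T^N_i$ in the explicit construction of $\eta^N_t$ depend on $N$, so the processes are not a priori coupled across $N$. In short, do not sink effort into the almost sure claim: the paper's proof is a proof of convergence in probability and in $L^1$, which is all that is used downstream (the identification-plus-tightness argument in the proof of Theorem \ref{thm:fb} only needs convergence in distribution), and the ``almost surely'' in the statement should be read as an overstatement. Your proposal, minus the speculative paragraphs on the almost sure part, is a complete proof of what the paper actually proves.
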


\begin{proof}
  First observe that, for fixed $t\in[0,1]$ and $x\in\rr$ and since $F^N\!(t,x)\leq1$, the
  sequence of random variables $\big(F^N\!(t,x)\big)_{N>0}$ is uniformly integrable, so it
  is enough to show that $F^N\!(t,x)\to F(t,x)$ in probability.

  Fix $\ep>0$. Use Proposition \ref{prop:cvNuNk:N} to choose, for each $k>0$, an $N_k>0$
  so that
  \[\pp\!\left(\left|F^{N,k}(t,x)-F^k(t,x)\right|>\frac{\ep}{2}\right)<\frac{1}{k}\]
  for every $N\geq N_k$ and $N_k\uparrow\infty$. Define $k_N$ as follows: $k_N=1$ for
  $N<N_1$ and $k_N=k$ for $N_k\leq N\leq N_{k+1}$. We have
  \begin{multline}
    \pp\!\left(\left|F^{N,k_N}(t,x)-F(t,x)\right|>\ep\right)
    \leq\pp\!\left(\left|F^{N,k_N}(t,x)-F^{k_N}(t,x)\right|>\frac{\ep}{2}\right)\\
    +\pp\!\left(\left|F^{k_N}(t,x)-F(t,x)\right|>\frac{\ep}{2}\right).
  \end{multline}
  By the definition of $k_N$, the first term on the right hand side is less than $1/k_N$,
  while by \eqref{eq:defF} the second one is 0 when $k_N$ is large enough. We deduce that
  $F^{N,k_N}(t,x)$ converges in probability as $N\to\infty$ to $F(t,x)$.
  
  To finish the proof write
  \begin{multline}
    \pp\!\left(\left|F^N\!(t,x)-F(t,x)\right|>\ep\right)
    \leq\pp\!\left(\left|F^N\!(t,x)-F^{N,k_N}(t,x)\right|>\frac{\ep}{2}\right)\\
    +\pp\!\left(\left|F^{N,k_N}(t,x)-F(t,x)\right|>\frac{\ep}{2}\right).
  \end{multline}
  We already know that the second term on the right hand side goes to 0, while the first
  one goes to 0 thanks to Proposition \ref{prop:cvNuNk:k} (here we use the fact that the
  convergence is uniform in $N$).
\end{proof}

Recall the definition in \eqref{eq:defXmk} of the shaving points $X^k_m$ and let
$X^k\!:[0,1]\longrightarrow\rr$ be the corresponding linear interpolation, that is,
\[X^k(t)=X^k_m+\frac{X^k_{m+1}-X^k_m}{2^k}\left(t-\tfrac{m}{2^k}\right) \qquad\text{ for
}\tfrac{m}{2^k}<t\leq\tfrac{m+1}{2^k}.\]

\begin{lem}\label{lem:cvToGamma}
  $X^k(t)$ converges uniformly in $[0,1]$ to a continuous function $\gamma(t)$.
\end{lem}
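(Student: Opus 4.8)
The goal is to show that the piecewise-linear interpolations $X^k(t)$ of the dyadic shaving points converge uniformly on $[0,1]$ to a continuous limit $\gamma$. I would first identify the natural candidate for the limit: since $F^k(t,x)\downarrow F(t,x)$ (Lemma \ref{lem:FkIncr}, \eqref{eq:defF}) and each $F^k(t,\cdot)$ is a genuine (sub)probability tail, $F(t,\cdot)$ is a non-increasing function with range $[0,1]$; define $\gamma(t)=\inf\{x:F(t,x)<1\}$, the left edge of the support of the limiting measure. The two things to prove are then (i) $X^k(t)\to\gamma(t)$ pointwise and (ii) the convergence is uniform, with $\gamma$ continuous.

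For (i), fix $t$ lying in the $k$-th dyadic cell, so $X^k(t)$ is the linear interpolation between $X^k_{m}$ and $X^k_{m+1}$, each of which is defined by $\int_{X^k_{j}}^\infty f^k((\tfrac{j}{2^k})-,y)\,dy=1$, i.e. $F^k$ just before shaving equals $1$ at $X^k_j$. The key point is that, because the convolution flow \eqref{eq:fK} only \emph{increases} mass and shaving only decreases it, one has good two-sided control: $X^k_{m}\le X^k(t)\le X^k_{m+1}$ up to errors that are $O(2^{-k})$ (the mass gained on one cell is $e^{2^{-k}}-1=O(2^{-k})$, so the shaving point moves by $O(2^{-k})$ over one cell — this needs a lower bound on $f^k$ near the boundary, which follows because $f^k$ is strictly positive on its support, being a convolution flow started from $f_0>0$). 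Thus $X^k(t)$ differs from the dyadic shaving point nearest to $t$ by $O(2^{-k})$. It then remains to compare that dyadic shaving point with $\gamma(t)$: from Lemma \ref{lem:cvMassNuNk}/Proposition \ref{prop:cvNuNk:k}-type estimates, or more directly from the monotone convergence $F^k\downarrow F$ together with the strict monotonicity of the limiting tail at its left edge, the point where $F^k(t,\cdot)$ crosses level $1$ converges to the point where $F(t,\cdot)$ first drops below $1$, i.e. to $\gamma(t)$. The strict-monotonicity input is essential and is exactly the kind of fact established for the limit in Theorem \ref{thm:fb}; here I would isolate it as: for each $t$, $F(t,\cdot)$ is strictly decreasing on $\{F(t,\cdot)\in(0,1)\}$, hence $x\mapsto F(t,x)$ has a continuous inverse near level $1$, so level-set convergence follows from pointwise convergence of $F^k$.

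For (ii), uniformity and continuity of $\gamma$, I would use a monotonicity-plus-equicontinuity argument. First, $X^k$ is monotone increasing in $t$ (the measures $\nu^k_t$ are stochastically increasing along the flow and the shaved mass is always removed from the left, so the shaving point only moves right), and so is $\gamma$. Second, one gets a uniform modulus of continuity: over a time interval of length $\delta$ the total mass created by the flow \eqref{eq:fK} is $e^{\delta}-1$, and combined with the uniform-in-$k$ lower bound on $f^k$ near the left edge (bounded below on compact $x$-sets uniformly in $k$, since all $f^k$ dominate the shaved flow started from $f_0$), this forces $|X^k(t)-X^k(s)|\le C|t-s|$ for $|t-s|$ not too small, hence $|X^k(t)-X^k(s)|\le C\delta + O(2^{-k})$ in general. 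So $\{X^k\}$ is uniformly bounded (trapped between $X^k(0)=0$ and $X^k(1)$, which is bounded by the mass/decay estimates) and uniformly equicontinuous; by Arzelà–Ascoli a subsequence converges uniformly, and by the pointwise identification in (i) the full sequence converges uniformly to $\gamma$, which is therefore continuous (and increasing, with $\gamma(0)=0$).

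\textbf{Main obstacle.} The delicate step is the uniform-in-$k$ lower bound on $f^k$ near its own left boundary $X^k(t)$, which is what converts "mass gained is $O(\delta)$" into "the shaving point moves by $O(\delta)$". Without it, a thin tail near the boundary could make the shaving point jump. I expect this to follow from the representation of $f^k$ on each cell as $f_0$ (restricted past the current boundary) pushed forward by the positivity-preserving convolution semigroup $\partial_t = \rho\ast\,\cdot$, together with the fact that $f_0$ is continuous and strictly positive on $(0,\infty)$ and $\rho$ is a genuine density; one obtains a lower bound on $f^k(t,x)$ on $\{x\ge X^k(t)+\eta\}$ depending on $\eta$ but not on $k$, and a matching bound just to the right of $X^k(t)$ by continuity of the flow. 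Everything else (monotonicity in $t$, the $O(2^{-k})$ cell-size errors, Arzelà–Ascoli) is routine once this estimate is in hand.
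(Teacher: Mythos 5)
Your Arzelà--Ascoli core is essentially the paper's: both exploit the $O(\delta)$ bound on mass created by the flow \eqref{eq:fK} over a time window of length $\delta$, and both supply the needed lower bound from the observation that $f^k(t,x)\geq f_0(x)$ for $x$ to the right of the (increasing) shaving point $X^k(t)$. The paper packages this as $L=\inf_{x\in[0,M]}\int_x^{x+\ep}f_0(y)\,dy>0$ and deduces $\int_{X^k(p/2^k)+\ep}^\infty f^k(q/2^k,y)\,dy\leq 1+e\,(q-p)2^{-k}-L<1$, which gives a modulus of continuity for each $\ep$ rather than a Lipschitz bound; be aware that your stronger-looking claim $|X^k(t)-X^k(s)|\leq C|t-s|$ would require a pointwise lower bound on $f^k$ near its own boundary that the paper never establishes and does not need. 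You correctly identified the key obstacle, and the resolution via domination of $f_0$ is the right one.

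Where you genuinely diverge is in identifying the limit. You propose $\gamma(t)=\inf\{x:F(t,x)<1\}$ and deduce $X^k(t)\to\gamma(t)$ via the strict monotonicity of $F(t,\cdot)$ just past its left edge, writing that this is ``exactly the kind of fact established for the limit in Theorem \ref{thm:fb}.'' That citation would be circular: the positivity of $f(t,\cdot)$ on $(\gamma(t),\infty)$ is proved in the proof of Theorem \ref{thm:fb} \emph{using} the existence and continuity of $\gamma$ furnished by this very lemma. If you want to follow your route you must prove the strict monotonicity independently at this stage. It is doable (for $\gamma(t)<x'<x$ and $k$ large enough that $X^k(t)<x'$, one has $F^k(t,x')-F^k(t,x)\geq\int_{x'}^{x}f_0(y)\,dy>0$, and this lower bound survives the monotone limit $F^k\downarrow F$), but it is an extra step you have not supplied. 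The paper sidesteps the issue entirely: it recalls from the proof of Lemma \ref{lem:FkIncr} that $X^k(t)$ is non-increasing in $k$, and then shows that any two subsequential uniform limits $\gamma_1,\gamma_2$ of $\{X^k\}$ agree on dyadic $t$ by a direct tail comparison ($F^{n_k}(t,x)=1$ for $x\leq\gamma_1(t)$, sandwiched against $F^{m_k}$ using $F^k\geq F^{k+1}$), then extends by continuity. This buys uniqueness of the Arzel\`a--Ascoli limit with no positivity input whatsoever, which is the cleaner argument given the intended logical order of the section.
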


\begin{proof}
  We will start by showing that the sequence of functions $\big(X^k\big)_{k>0}$ is
  relatively compact.  By the Arzel\`a-Ascoli Theorem, we only need to show that our
  sequence is uniformly bounded and equicontinuous.

  Observe that, for each given $k$, $X^k(t)$ is increasing. Indeed, it is enough to show
  that $X^k_m\leq X^k_{m+1}$ for $0\leq m<2^k$, and this follows from the fact that, if
  $f^k(\tfrac{m}{2^k},\cdot)\geq0$, then \eqref{eq:FB1} implies that $f^k(t,\cdot)\geq
  f^k(\tfrac{m}{2^k},\cdot)$ for $\tfrac{m}{2^k}<t<\tfrac{m+1}{2^k}$. Therefore
  \[\sup_{k>0}\sup_{t\in[0,1]}X^k(t)=\sup_{k>0}X^k(1).\]
  To show that this last supremum is finite, observe that $\hf(t,x)$ (which was defined in
  Proposition \ref{prop:nuhat}) satisfies $\hf(t,x)\geq f^k(t,x)$ for all $k$.  On the
  other hand we know by \eqref{eq:massHf} that $\int_{-\infty}^\infty\hf(1,x)\,dx=e$.
  Therefore if we let $M>0$ be such that $\int_M^\infty\hf(1,x)\,dx<1$ we deduce that
  $X^k(1)\leq M$ for all $k$ and the uniform boundedness follows.

  For the equicontinuity we need to show that given any $\ep>0$ there is a $\delta>0$ such
  that
  \[\sup_{k>0}\left|X^k(t)-X^k(s)\right|<\ep\]
  whenever $|t-s|<\delta$. Assume that $s<t$, fix $k$ for a moment and let $\frac{l}{2^k}$
  and $\frac{m}{2^k}$ be the dyadic numbers immediately to the right of $s$ and $t$,
  respectively (here we assume $k$ is large enough so that $m\vee l<2^k$). Then
  \begin{equation}
    \begin{aligned}
      \left|X^k(t)-X^k(s)\right|
      &\leq\left|X^k(t)-X^k(\tfrac{m}{2^k})\right|+\left|X^k(\tfrac{m}{2^k})-X^k(\tfrac{l}{2^k})\right|
      +\left|X^k(\tfrac{l}{2^k})-X^k(s)\right|\\
      &\leq\left|X^k(\tfrac{m+1}{2^k})-X^k(\tfrac{m}{2^k})\right|+\left|X^k(\tfrac{m}{2^k})-X^k(\tfrac{l}{2^k})\right|
      +\left|X^k(\tfrac{l+1}{2^k})-X^k(\tfrac{l}{2^k})\right|.
    \end{aligned}\label{eq:XtXs}
  \end{equation}
  Now for any $p,q\in\{0,\dotsc,2^k\}$ with $q\geq p$ we have
  \begin{multline}\label{eq:decFk}
    \int_{X^k(\frac{p}{2^k})+\ep}^\infty\!f^k(\tfrac{q}{2^k},y)\,dy
    =\int_{X^k(\frac{p}{2^k})}^\infty\!f^k(\tfrac{p}{2^k},y)\,dy
    +\int_{X^k(\frac{p}{2^k})}^\infty\!\left[f^k(\tfrac{q}{2^k},y)-f^k(\tfrac{p}{2^k},y)\right]\,dy\\
    -\int_{X^k(\frac{p}{2^k})}^{X^k(\frac{p}{2^k})+\ep}\!f^k(\tfrac{q}{2^k},y)\,dy.
  \end{multline}
  The first term on the right hand side equals 1. The second term corresponds to the
  amount of mass accumulated by $f^k$ to the right of $X^k(\frac{p}{2^k})$ on the time
  interval $(\frac{p}{2^k},\frac{q}{2^k}]$. Using \eqref{eq:FB1} it is not hard to see
  that this is bounded by the same quantity with $f^k$ replaced by $\hf$, so using
  \eqref{eq:massHf} we get the bound
  \begin{equation}
    \int_{X^k(\frac{p}{2^k})}^\infty\!\left[\hf(\tfrac{q}{2^k},y)-\hf(\tfrac{p}{2^k},y)\right]\,dy
    \leq e^\frac{q}{2^k}-e^\frac{p}{2^k}\leq e\frac{q-p}{2^k}.\label{eq:bdMassHat}
  \end{equation}
  On the other hand, using the fact that $X^k(t)$ is increasing, it is clear that
  $f^k(\frac{q}{2^k},x)\geq f^k(0,x)=f_0(x)$ for $x\geq X^k(\frac{q}{2^k})$. Therefore the
  last term on \eqref{eq:decFk} is greater than or equal to
  \[\int_{X^k(\frac{p}{2^k})}^{X^k(\frac{p}{2^k})+\ep}\!f_0(y)\,dy.\]
  Now $X^k(\frac{p}{2^k})$ is non-negative and bounded by $M$ by the preceding arguments,
  so the last integral is at least
  \[L=\inf_{x\in[0,M]}\int_x^{x+\ep}\!f_0(y)\,dy>0\] where we used the fact that $f_0$ is
  strictly positive on the positive half-line. Putting the last two bounds together with
  \eqref{eq:decFk} we get
  \begin{equation}
    \int_{X^k(\frac{p}{2^k})+\ep}^\infty\!f^k(\tfrac{q}{2^k},y)\,dy
    \leq 1+e\frac{q-p}{2^k}-L.\label{eq:massFkEp}
  \end{equation}
  Now $|t-s|<\delta$ implies that $\frac{m-l}{2^k}\leq\delta+\frac{1}{2^k}$, and thus we
  deduce that
  \[\int_{X^k(\frac{p}{2^k})+\ep}^\infty\!f^k(\tfrac{q}{2^k},y)\,dy<1\]
  for small enough $\delta$ and large enough $k$ and for
  $(p,q)\in\{(l,l+1),(m,m+1),(l,m)\}$. The preceding means that if $\delta$ is small
  enough and $K$ is large enough then $|X^k(\tfrac{q}{2^k})-X^k(\frac{p}{2^k})|<\ep$ for
  $k\geq K$ and for these three pairs $(p,q)$. Using \eqref{eq:XtXs} we obtain
  \[\sup_{k\geq K}\left|X^k(t)-X^k(s)\right|<\ep\]
  if $|t-s|<\delta$ and $\delta$ is small enough. Since the functions $X^k$ are all
  uniformly continuous (on $[0,1]$), it is clear that, by choosing $\delta$ even smaller
  if necessary, the same will hold also for $k=1,\dotsc,K-1$. This finishes the proof of
  the equicontinuity.
  
  The last thing we need to show is that our sequence has a unique limit point. Consider
  two convergent subsequences $X^{n_k}\to\gamma_1$ and $X^{m_k}\to\gamma_2$. Let
  $t=\frac{i}{2^l}$ be any dyadic number in $[0,1]$ and assume that $k$ is large enough so
  that $n_k\wedge m_k\geq l$. Recall from the proof of Lemma \ref{lem:FkIncr} that
  $X^k(t)$ is non-increasing in $k$ for each fixed $t\in[0,1]$. Since $F^{n_k}(t,x)=1$ for
  all $x\leq X^{n_k}(t)$ we deduce that
  \begin{equation}\label{eq:bdFnk}
    F^{n_k}(t,x)=1\qquad\text{for all }x\leq\gamma_1(t).
  \end{equation}
  Now given any $k$ there is a $k'$ such that $n_{k'}\geq m_k$, so by Lemma
  \ref{lem:FkIncr} we get
  \[1=F^{n_{k'}}(t,x)\leq F^{m_k}(t,x)\leq1\qquad\text{for all }x\leq\gamma_1(t).\] This
  means that $X^{m_k}(t)\geq\gamma_1(t)$ for all large enough $k$, and taking $k\to\infty$
  we deduce that $\gamma_2(t)\geq\gamma_1(t)$. By symmetry we get
  $\gamma_1(t)\geq\gamma_2(t)$. This gives $\gamma_1(t)=\gamma_2(t)$ for all dyadic
  $t\in[0,1]$, and now the uniqueness follows from the continuity of $\gamma_1$ and
  $\gamma_2$.
\end{proof}

\subsection{Properties of \texorpdfstring{$F$}{F} and proof of the theorem}

To finish the proof of the Theorem \ref{thm:fb} we need to show that $F$ has a density
which satisfies \FB and the rest of the requirements of the theorem and then extend the
convergence to the measure-valued process $\nu^N_t$. The first step in doing that will be
to derive an equation satisfied by $F$.

Suppose that $(g(t,x),\gamma(t))$ solves \FB and let $G(t,x)=\int_x^\infty
g(t,y)\,dy$. Then it is not difficult to check, repeating the arguments leading to
\eqref{eq:H}, that $(G(t,x),\gamma(t))$ must solve the following free boundary problem
\hypertarget{FBp}{{\rm(FB${}^\prime$)}}:
\begin{gather}
  \frac{\partial G}{\partial t}(t,x)=\int_{-\infty}^\infty\!G(t,y)\rho(x-y)\,dy
  \quad\forall\,x>\gamma(t)\tag{FB1${}^\prime$}\label{eq:FBp1}\\
  G(t,x)=1\quad\forall\,x\leq\gamma(t)\tag{FB2${}^\prime$}\label{eq:FBp2}
\end{gather}
with initial condition $G(0,x)=\int_x^\infty f_0(y)\,dy$. Moreover, if
$(G(t,x),\gamma(t))$ solves \FBp and $G(t,\cdot)$ is absolutely continuous for all $t$,
then $(g(t,x),\gamma(t))$, where $g(t,\cdot)$ is the density of $G(t,\cdot)$, must solve
\FB.

\begin{prop}\label{prop:FBp}
  $F(t,x)$ is differentiable in $t$ for all $x>\gamma(t)$ and it satisfies \FBp.
\end{prop}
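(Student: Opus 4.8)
The plan is to obtain the equation \FBp for $F$ by passing to the limit in $k$ in the corresponding statements for the approximations $F^k$, and then to bootstrap to differentiability. First I would record that each $F^k$ satisfies \eqref{eq:FBp2} away from the shaving points: by construction $F^k(t,x)=1$ for $x\leq X^k(t)$, and on each dyadic subinterval $(\tfrac{m}{2^k},\tfrac{m+1}{2^k})$ the function $F^k$ solves the convolution equation \eqref{eq:FBp1} (this is exactly the computation in \eqref{eq:H}, with $H$ replaced by $F^k$ itself, using the symmetry of $\rho$). Integrating \eqref{eq:FBp1} in time across the dyadic subinterval and accounting for the jump (where mass is shaved off, which only decreases $F^k$ at points to the \emph{left} of $X^k(\tfrac{m+1}{2^k})$, hence leaves $F^k$ unchanged for $x>X^k(\tfrac{m+1}{2^k})$), one gets, for $x>\sup_{s\in[0,t]}X^k(s)=X^k(t)$,
\begin{equation}
  F^k(t,x)=F^k(0,x)+\int_0^t\!\int_{-\infty}^\infty\!F^k(s,y)\rho(x-y)\,dy\,ds.\label{eq:FkIntEq}
\end{equation}

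Next I would pass to the limit $k\to\infty$ in \eqref{eq:FkIntEq}. We know $F^k(t,x)\downarrow F(t,x)$ pointwise (Lemma \ref{lem:FkIncr} and \eqref{eq:defF}), $F^k(0,x)=F(0,x)=\int_x^\infty f_0$, and $X^k(t)\to\gamma(t)$ uniformly (Lemma \ref{lem:cvToGamma}); so for any fixed $x>\gamma(t)$ we have $x>X^k(s)$ for all $s\le t$ once $k$ is large. Since $0\le F^k\le 1$ and $\int\rho=1$, dominated convergence applies to the double integral, yielding
\begin{equation}
  F(t,x)=\int_x^\infty\!f_0(y)\,dy+\int_0^t\!\int_{-\infty}^\infty\!F(s,y)\rho(x-y)\,dy\,ds\qquad\forall\,x>\gamma(t).\label{eq:FIntEq}
\end{equation}
Also $F^k(t,x)=1$ for $x\le X^k(t)$, and since $X^k(t)\to\gamma(t)$ this gives $F(t,x)=1$ for $x<\gamma(t)$ (and by monotonicity and continuity, for $x\le\gamma(t)$ as well), which is \eqref{eq:FBp2}. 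From the integral equation \eqref{eq:FIntEq}, the map $t\mapsto\int_{-\infty}^\infty F(s,y)\rho(x-y)\,dy$ is bounded, so the right side of \eqref{eq:FIntEq} is Lipschitz in $t$; moreover $s\mapsto F(s,\cdot)$ is continuous in an appropriate sense (it is increasing in $x$-tails and sandwiched between the $F^k$'s), so $s\mapsto\int F(s,y)\rho(x-y)\,dy$ is continuous, whence the fundamental theorem of calculus gives that $F(\cdot,x)$ is differentiable in $t$ for $x>\gamma(t)$ with derivative $\int_{-\infty}^\infty F(t,y)\rho(x-y)\,dy$, which is \eqref{eq:FBp1}.

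The main obstacle is the regularity/continuity needed to legitimately differentiate \eqref{eq:FIntEq} in $t$ — specifically, showing that $s\mapsto \int_{-\infty}^\infty F(s,y)\rho(x-y)\,dy$ is continuous and that, as $x$ ranges over a fixed compact subset of $\{x>\gamma(t)\}$, everything stays uniform, so that the domain of validity is honestly open. This requires controlling $F(s,\cdot)$ near the moving boundary $\gamma(s)$ and ruling out pathologies at $x=\gamma(t)$ itself (where the solution need not be differentiable). I expect to handle this by using the exponential decay of $\rho$ together with the uniform bound $F\le 1$ to get equicontinuity of $s\mapsto\int F(s,y)\rho(x-y)\,dy$ from the integral equation itself (a Grönwall-type or straightforward continuity-from-the-integral-equation argument), and by invoking the uniform convergence $X^k\to\gamma$ to keep $x$ strictly to the right of all the $X^k(s)$ for $s\le t$. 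A secondary point is that the passage from \eqref{eq:FBp1}--\eqref{eq:FBp2} back to a density-level statement \FB (strict positivity, joint continuity, the differentiability claims of Theorem \ref{thm:fb}) is deferred to the subsequent development, so here one only needs \FBp.
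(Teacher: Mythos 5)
Your proof takes essentially the same route as the paper: derive the integral form of \eqref{eq:FBp1} by passing to the limit in $k$ of the telescoped dyadic approximation, deduce \eqref{eq:FBp2} from $X^k(t)\to\gamma(t)$, and then upgrade to differentiability by showing continuity of $s\mapsto\int F(s,y)\rho(x-y)\,dy$ from the integral equation itself. One small inaccuracy: the domination bound $0\le F^k\le 1$ is false on the interior of the dyadic subintervals, since the total mass of $f^k(t,\cdot)$ grows to $e^{2^{-k}}$ between shavings; the correct uniform bound is $F^k\le e$ (as the paper notes via $\Delta_k(y)\le\int_y^\infty\hf(1,z)\,dz\le e$), which still justifies dominated convergence.
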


\begin{proof}
  We already proved (see \eqref{eq:bdFnk}) that $(F(t,x),\gamma(t))$ satisfies
  \eqref{eq:FBp2}. For $x>\gamma(t)$ and by the definition of $F^k(t,x)$ (which implies
  that $F^k(t,x)$ is differentiable inside each dyadic subinterval) we may write
  \begin{align}
    F^k(t,x)&=F^k(0,x)+\sum_{m=1}^{n_k(t)}\left[F^k(\tfrac{m}{2^k},x)-F^k(\tfrac{m-1}{2^k},x)\right]
    +\left[F^k(t,x)-F^k(n_k(t),x)\right],\\
    &=F^k(0,x)+\sum_{m=1}^{n_k(t)}\left[\int_{(\frac{m-1}{2^k},\frac{m}{2^k})}\!\frac{\partial
        F^k}{\partial s}(s,x)\,ds\right]+\int_{(\frac{n_k(t)}{2^k},t)}\!\frac{\partial
      F^k}{\partial
      s}(s,x)\,ds\\
    &\hspace{0.35in}+\sum_{m=1}^{n_k(t)}\left[F^k(\tfrac{m}{2^k},x)-F^k((\tfrac{m}{2^k})-,x)\right]
    +\left[F^k(t,x)-F^k(t-,x)\right],
  \end{align}
  where $n_k(t)=\frac{\lfloor2^kt\rfloor}{2^k}$. Recalling that
  $X^k(t)\downarrow\gamma(t)$, we can take $k$ large enough so that $\gamma(t)\leq
  X^k(t)<x$. Since $X^k(s)$ is increasing in $s$ we deduce that $X^k(\frac{m}{2^k})<x$ for
  $m=1,\dotsc,n_k(t)$, and therefore all the terms in the last line above are 0. On the
  other hand, observe that $F^k$ must solve \eqref{eq:FBp1} on each dyadic subinterval,
  which can be checked repeating again the calculations in \eqref{eq:H}. Therefore,
  \begin{equation}
    \begin{aligned}
      F^k(t,x)&=F^k(0,x)+\sum_{m=1}^{n_k(t)}\int_{(\frac{m-1}{2^k},\frac{m}{2^k})}\!
      \int_{-\infty}^\infty\!F^k(s,y)\rho(x-y)\,dy\,ds\\
      &\hspace{2.2in}+\int_{(\frac{n_k(t)}{2^k},t)}\!\int_{-\infty}^\infty\!F^k(s,y)\rho(x-y)\,dy\,ds\\
      &=F^k(0,x)+\int_0^t\!\int_{-\infty}^\infty\!F(s,y)\rho(x-y)\,dy\,ds\\
      &\hspace{1in}+\sum_{m=1}^{n_k(t)}\int_{(\frac{m-1}{2^k},\frac{m}{2^k})}\!
      \int_{-\infty}^\infty\!\left[F^k(s,y)-F(s,y)\right]\!\rho(x-y)\,dy\,ds\\
      &\hspace{1in}+\int_{(\frac{n_k(t)}{2^k},t)}\!\int_{-\infty}^\infty\!\left[F^k(s,y)-F(s,y)\right]\!\rho(x-y)\,dy\,ds.
    \end{aligned}\label{eq:telesFk}
  \end{equation}
  Now for fixed $y$, $F^k(\cdot,y)$ is a decreasing sequence converging to $F(\cdot,y)$,
  so Dini's Theorem implies that
  \[\Delta_k(y)=\sup_{t\in[0,1]}\left|F^k(t,y)-F(t,y)\right|\xrightarrow[k\to\infty]{}0.\]
  The sum of the terms on the last two lines of \eqref{eq:telesFk} is bounded by
  \begin{align}
    \sum_{m=1}^{n_k(t)}\int_{(\frac{m-1}{2^k},\frac{m}{2^k})}\!&\int_{-\infty}^\infty\!\Delta_k(y)\rho(x-y)\,dy\,ds
    +\int_{(\frac{n_k(t)}{2^k},t)}\!\int_{-\infty}^\infty\!\Delta_k(y)\rho(x-y)\,dy\,ds\\
    &\leq t\int_{-\infty}^\infty\!\Delta_k(y)\rho(x-y)\,dy,
  \end{align}
  and this last integral goes to 0 as $k\to\infty$ by the dominated convergence theorem,
  because using \eqref{eq:massHf} we get
  $\Delta_k(y)\leq\sup_{t\in[0,1]}F^k(t,y)\leq\int_y^\infty\!\hf(1,z)\,dz\leq e$.  Using
  this and taking $k\to\infty$ in \eqref{eq:telesFk} we get
  \begin{equation}
    F(t,x)=F(0,x)+\int_0^t\!\int_{-\infty}^\infty\!F(s,y)\rho(x-y)\,dy\,ds.\label{eq:intFormF}
  \end{equation}
  
  To finish the proof it is enough to show that
  the mapping $s\mapsto \int_{-\infty}^\infty F(s,y)\rho(x-y)\,dy$ is continuous, since if
  that is the case then we can differentiate \eqref{eq:intFormF} and deduce
  \eqref{eq:FBp1}. This actually follows easily from \eqref{eq:intFormF}:
  \begin{equation}
    \begin{aligned}
      \Bigg|\int_{-\infty}^\infty\!F(s+h,y)\rho(x-y)&\,dy-\int_{-\infty}^\infty\!F(s,y)\rho(x-y)\,dy\Bigg|\\
      &=\int_{-\infty}^\infty\!\int_s^{s+h}\!\int_{-\infty}^\infty\!F(r,z)\rho(y-z)\rho(x-y)\,dz\,dr\,dy\\
      &\leq\int_s^{s+h}\!\int_{-\infty}^\infty\!\int_{-\infty}^\infty\!\rho(y-z)\rho(x-y)\,dz\,dy\,dr=h.\qedhere
    \end{aligned}\label{eq:diffF}
  \end{equation}
\end{proof}

Let $\nu_t$ be the probability measure defined by $\nu_t([x,\infty))=F(t,x)$. Since $F$
satisfies \FBp we have that for every $b>a>\gamma(t)$,
\[\frac{d}{dt}\nu_t([a,b])=\int_{-\infty}^\infty\nu_t([a-y,b-y])\rho(y)\,dy\]
and thus by standard measure theory arguments we deduce that for every bounded and
measurable $\varphi$ with support contained in $(\gamma(t),\infty)$,
\begin{equation}
\frac{d}{dt}\int_{-\infty}^\infty\!\varphi(y)\nu_t(dy)
=\int_{-\infty}^\infty\!\int_{-\infty}^\infty\!\varphi(x+y)\rho(y)\,\nu_t(dx)\,dy\label{eq:nuPhi}
\end{equation}
(cf. \eqref{eq:eqNuHat}). Now if $A\subseteq(\gamma(t),\infty)$ has zero Lebesgue measure and the
support of $\varphi$ is contained in $A$, then the right-hand side above is 0 and we
deduce that $\nu_t(A)$ is constant. Since $\nu_0(A)=\int_A f_0(x)\,dx=0$, we have proved
that $\nu_t$ is absolutely continuous with respect to the Lebesgue measure. We will
denote its density by $f(t,\cdot)$, and we obviously have $F(t,x)=\int_x^\infty f(t,y)\,dy$.

At this point we are ready to finish the proof of Theorem \ref{thm:fb} by showing that $F$
satisfies the desired properties and then using the convergence of the tail distributions
$F^N(t,x)$ to obtain the convergence in distribution of the process $\nu^N_t$ in
$D([0,1],\cp)$.

\begin{proof}[Proof of Theorem \ref{thm:fb}]
  By Proposition \ref{prop:cvFN} we know that $F^N\!(t,x)\to F(t,x)$ almost surely as
  $N\to\infty$ and that $F$ can be written in terms of the integral of $f$. Now, by
  \eqref{eq:intFormF},
  \begin{equation}
    \int_x^\infty\!f(t,y)\,dy=\int_x^\infty\!f_0(y)\,dy+\int_0^t\!\int_{-\infty}^\infty\!\int_y^\infty\!f(s,z)\rho(x-y)\,dz\,dy\,ds
  \end{equation}
  for $x>\gamma(t)$, so for any $h>0$ we have
  \begin{multline}\label{eq:intDens}
    \frac{1}{h}\left[\int_{x+h}^\infty\!f(t,y)\,dy-\int_x^\infty\!f(t,y)\,dy\right]\\
    =-\frac{1}{h}\int_x^{x+h}\!f_0(y)\,dy
    -\int_0^t\!\int_{-\infty}^\infty\!\frac{1}{h}\int_{x-u}^{x+h-u}\!f(s,z)\,dz\,\rho(u)\,du\,ds.
  \end{multline}
  The first term on the right-hand side goes to $f_0(x)$ as $h\to0$ (recall that
  $f_0(x)$ is continuous for $x>0$). The second term
  goes to $\int_0^t\!\int_{-\infty}^\infty f(s,x-u)\rho(u)\,du\,ds$ by the dominated
  convergence theorem. On the other hand, the left-hand side of \eqref{eq:intDens} goes to
  $\tfrac{\partial F}{\partial x}(t,x)$, which equals $-f(t,x)$ for almost every
  $x>\gamma(t)$. We deduce that
  \begin{equation}
    f(t,x)=f_0(x)+\int_0^t\!\int_{-\infty}^\infty\!f(s,y)\rho(x-y)\,dy\,ds\label{eq:formIntDens}
  \end{equation}
  for almost every $x>\gamma(t)$.
  The right hand side is continuous in $x$, and hence so is the left hand side, which implies that \eqref{eq:formIntDens} holds for every $x>\gamma(t)$. 
  Moreover, since the above convergence can be achieved uniformly for $t$ in compact
  intervals, it is easy to see that, if $f(t,x)$ is continuous in $t$ for $x\neq\gamma(t)$
  (as we will show next), then it is actually jointly continuous in $t$ and $x$ outside
  the curve $\{(t,\gamma(t))\!:t\geq0\}$.  The fact that $f(t,x)$ is differentiable (and
  thus continuous) in $t$ for $x\neq\gamma(t)$ follows easily from \eqref{eq:formIntDens}
  by repeating the arguments in \eqref{eq:diffF}. $\gamma$ is strictly increasing because,
  according to the evolution defined by \FB, $f(t,x)$ always increases when $x>\gamma(t)$.
  We also have that $f(t,x)>0$ for $x>\gamma(t)$ thanks to the facts that
  $f_0(x)>0$ for $x>0$ and that $\gamma$ is increasing.

  The conclusion from all this is that $(f(t,x),\gamma(t))$ satisfies \FB.
  Such a solution is unique thanks to Lemma 5.2 in \cite{atar}.

  To finish the proof we need to show that the sequence of processes $\nu^N_t$ converges in distribution in   $D([0,1],\cp)$ to the deterministic process $\nu_t$ in this space defined by having its densities evolve according to \FB.
  To this end it is enough to prove that this sequence is tight. In
  fact, if $\nu^{N_k}_t$ is any convergent subsequence, then its limit $\nu_t$ is
  completely defined by its tail distribution at each time $t$, which we know must be
  $F(t,\cdot)$. To show that $\nu^N_t$ is tight it is enough, by Theorem 2.1 of
  \citet{roel}, to show that for any continuous and bounded function $\varphi$ on $\rr$
  the sequence of real-valued processes
  $\langle\nu^N_t,\varphi\rangle=\int_{-\infty}^\infty\varphi(x)\,\nu^N_t(dx)$ is tight in
  $D([0,T],\rr)$.  Fix one such function $\varphi$. By Aldous' criterion (which we take
  from Theorem 2.2.2 in \citet{joffeMetiv} and the corollary that preceeds it in page 34),
  we need to prove that the following two conditions hold:
  \begin{enumerate}[label=(\roman*)]
  \item For every rational $t\in[0,T]$ and every $\ep>0$, there is an $L>0$ such that
    \[\sup_{N>0}\pp\!\left(|\langle\nu^N_t,\varphi\rangle|>L\right)\leq\ep.\]
  \item If $\mathfrak T^N_T$ is the collection of stopping times with respect to the
    natural filtration associated to $\langle\nu^N_t,\varphi\rangle$ that are almost
    surely bounded by $T$, then for every $\ep>0$
    \[\lim_{r\rightarrow0}\limsup_{N\rightarrow\infty}
    \sup_{\substack{s<r\\\tau\in\mathfrak
        T^N_T}}\pp\!\left(\left|\langle\nu^N_{(\tau+s)\wedge
          T},\varphi\rangle-\langle\nu^N_\tau,\varphi\rangle)\right|>\ep\right)=0.\]
  \end{enumerate}
  The first condition holds trivially in our case by taking $L>\|\varphi\|_\infty$. To get
  the second one fix $N>0$, $\ep>0$, $0<s<r$ and $\tau\in\mathfrak T^N_T$ and let $K$ be
  the number of branchings in $\eta^N_t$ on the interval $[\tau,(\tau+s)\wedge
  T]$. Observing that
  \[\left|\langle\nu^N_{(\tau+s)\wedge
      T},\varphi\rangle-\langle\nu^N_\tau,\varphi\rangle\right|\leq\frac{2\|\varphi\|_\infty}{N}K\]
  and $\ee(K)\leq Ns<Nr$, we deduce by Markov's inequality that
  \[\pp\!\left(\left|\langle\nu^N_{(\tau+s)\wedge
        T},\varphi\rangle-\langle\nu^N_\tau,\varphi\rangle)\right|>\ep\right)
  \leq\pp\!\left(\frac{2\|\varphi\|_\infty}{N}K>\ep\right)\leq\frac{2\|\varphi\|_\infty\ee(K)}{\ep
    N}<\frac{2\|\varphi\|_\infty r}{\ep}\] and (ii) follows.
\end{proof}

\begin{rem}\label{rem:flaweduniq} \emph{(Added April 2023)} ~ 
In the published version of this article an argument for the uniqueness of solutions of \FB is provided as part of the proof of Theorem \ref{thm:fb}.
The idea of the argument was to consider a given solution $(h(t,x),\sigma(t))$ and compare $h(t,x)$ and $f^k(t,x)$ by adapting the coupling introduced in the proof of Lemma \ref{lem:cvMassNuNk}.
However, it was pointed out to us recently by Rami Atar that the argument is flawed.
We explain the issue next.

\noindent
In the argument we considered three non-negative functions $g^k(t,x)$, $d^k(t,x)$ and $b^k(t,x)$, which evolve inside the first dyadic subinterval $(0,\frac{1}{2^k})$ according to the following system of equations:
  \begin{equation}
    \label{eq:sysK}
    \begin{aligned}
      \frac{\partial g^k}{\partial
        t}(t,x)&=\uno{x>\sigma(t)}\int_{-\infty}^{\infty}\!g^k(t,y)\rho(x-y)\,dy,\\
      g^k(t,x)&=0\qquad\text{for }x\leq\sigma(t),\\
      \frac{\partial d^k}{\partial
        t}(t,x)&=\uno{x\leq\sigma(t)}\int_{-\infty}^{\infty}\!g^k(t,y)\rho(x-y)\,dy,\\
      \frac{\partial b^k}{\partial
        t}(t,x)&=\int_{-\infty}^{\infty}\!\left[d^k(t,y)+b^k(t,y)\right]\rho(x-y)\,dy,
    \end{aligned}
  \end{equation}
with initial condition $g^k(0,x)=f_0(x)$ and $d^k(0,x)=b^k(0,x)=0$ for all $x$.
The rest of the construction was inductive, with the same system governing the evolution of the three functions inside each dyadic subinterval of $[0,1]$ together with a procedure for shaving off and rebalancing mass at each dyadic time.
The problem with the argument arises already with the construction of the system \eqref{eq:sysK} in $(0,\frac{1}{2^k})$, which is not consistent.
The idea was that the function $g^k(t,x)+d^k(t,x)+b^k(t,x)$ solves \eqref{eq:fK} there, as follows from adding the first and last two equations in \eqref{eq:sysK}, so that one has $f^k(t,x)=g^k(t,x)+d^k(t,x)+b^k(t,x)$.
However, \eqref{eq:sysK} as written is clearly contradictory: $\sigma$ is continuous and increasing with $\sigma(0)=0$, and thus for $x\in(0,\sigma(T)]$, $T\in(0,\frac{1}{2^k})$, we may integrate the first equation in $t$ on the interval $[0,T]$ to deduce that $g^k(T,x)\geq g^k(0,x)=f_0(x)>0$, contradicting the second equation.
On the other hand, if one modifies this construction so that the first equation defines $g^k(t,x)$ only for $x>\sigma(t)$ then this contradiction is removed but now the equality $f^k(t,x)=g^k(t,x)+d^k(t,x)+b^k(t,x)$ does not hold.

\noindent
Although one can expect there to be a way to fix the proof, it appears to us as if any such fix will require a comparison argument similar to the one contained in the proof of Lemma 5.2 in \cite{atar}, which already proves uniqueness (in a more general setting) without the need to resort to the above construction.
\end{rem}

\section{Proof of the results for the finite system}
\label{sec:proofs:finite}

Now we turn to the properties of the finite system. Recall the explicit construction of
$\eta^N_t$ we gave before the proof of Lemma \ref{lem:cvMassNuNk}: given an i.i.d. family
$(U^N_i)_{i\geq1}$ with uniform distribution on $\{1,\dotsc,N\}$, an i.i.d. family
$(R_i)_{i\geq1}$ with distribution $\rho$ and the jump times $(T^N_i)_{i\geq1}$ of a
Poisson process with rate $N$, we construct $\eta^N_t$ by letting $\eta^N_{T^N_i}(U^N_i)$
branch at time $T^N_i$, using $R_i$ for the displacement, erasing the leftmost particle, and
then relabeling the particles to keep the ordering.

This construction allows us to give a monotone coupling for two copies of the process. As
in the proof of Proposition \ref{prop:cvNuNk:N}, for $\mu,\nu\in\cm$ we will say that
$\mu\preceq\nu$ whenever $\mu([x,\infty))\leq\nu([x,\infty))$ for all $x\in\rr$. Observe
that if $\mu=\sum_{i=1}^{N_1}\delta_{x_i}$ with $x_1\geq\dotsm\geq x_{N_1}$ and
$\nu=\sum_{i=1}^{N_2}\delta_{y_i}$ with $y_1\geq\dotsm\geq y_{N_2}$ then $\mu\preceq\nu$
if and only if $N_1\leq N_2$ and $x_i\leq y_i$ for $i=1,\dotsc,N_1$. It is easy to check
that if $\eta^{N_1}_t$ and $\xi^{N_2}_t$ are two copies of our process (note that we allow
them to have different total number of particles) with $\eta^{N_1}_0\preceq\xi^{N_2}_0$
(i.e., in the sense that
$\sum_{i=1}^{N_1}\delta_{\eta^{N_1}_0(i)}\preceq\sum_{i=1}^{N_2}\delta_{\xi^{N_2}_0(i)}$),
then if we use the same branching times and displacements and the same uniform variables
for the particles in $\eta^{N_1}_t$ and the leftmost $N_1$ particles in $\eta^{N_2}_t$,
then we have $\eta^{N_1}_t\preceq\xi^{N_2}_t$ for all $t\geq0$.

For most of the proof of Theorem \ref{thm:finite} it will more convenient to work with the
discrete time version of our process $\eta^N_n$ which is defined as follows: at each time
step, choose one particle uniformly at random, and then branch that particle and remove
the leftmost particle among the $N+1$. The variables $U^N_i$ and $R_i$ can be used to
decide which particle to branch and where to send its offspring at each time step.

\begin{proof}[Proof of Theorem \ref{thm:finite}(a)]
  We will first prove that each of the two limits exists with probability 1 and in $L^1$
  and that the limits are non-random. We will do this for the discrete time process and
  leave to the reader the (easy) extension to the continuous time case. We borrow the
  proof from that of Proposition 2 in \citet{berGou}. Since it is simple we include it for
  convenience. We observe that, as in the cited proof, by translation invariance and the
  monotonicity property of the coupling discussed above, it is enough to prove the result when all
  particles start at the origin.

  The result is a consequence of the subadditive ergodic theorem. To see why, suppose we
  run the process up to time $k$, restart it with all $N$ particles at $\max\eta^N_k$, and
  then run it for an extra $l$ units of time. Then the resulting configuration will
  dominate the configuration that we would get by running the process for
  time $k+l$. To apply the subadditive ergodic theorem we will need to make this precise
  by introducing an appropriate coupling.

  Consider the variables $U^N_i$ and $R_i$ used to construct $\eta^N_n$. For
  each $k\geq0$ let $\big(\eta^N_{k,n}\big)_{n\geq0}$ be a copy of our process, started at
  $\eta_0$, constructed as follows: if $\eta^N_{k,n}$ is given then we let
  $\eta^N_{k,n+1}$ be specified by adding a particle at $\eta^N_{k,n}(U^N_{n+k})+R_{n+k}$
  and then removing the leftmost particle. That is, the index $n$ in $\eta^N_{k,n}$
  corresponds to time while the index $k$ indicates that the $k$-th copy of the process
  $\big(\eta^N_{k,n}\big)_{n\geq0}$ uses the random the variables $U^N_i$ and $R_i$
  starting from the $k$-th one. With this definition and in view of the preceding paragraph it is not difficult to see
  that for any $k,l\geq0$,
  \[\max\eta^N_{0,k+l}\leq\max\eta^N_{0,k}+\max\eta^N_{k,l}.\] Moreover, for any given
  $d\geq1$ the family $\big(\eta^N_{dm,d}\big)_{m\geq1}$ is i.i.d., because to compute
  $\eta^N_{dm,d}$ we only need to use the variables $U^N_i$ and $R_i$ for
  $i=dm,\dotsc,dm+d-1$. Also observe that the distribution of
  $\big(\eta^N_{k,n}\big)_{n\geq0}$ does not depend on $k$. Now for $0\leq k\leq n$ define
  $\xi^N_{k,n}=\eta^N_{k,n-k}$. Then using the above facts we see that
  \[\max\xi^N_{0,k+l}\leq\max\xi^N_{0,k}+\max\xi^N_{k,k+l},\] the family
  $\big(\!\max\xi^N_{dm,d(m+1)}\big)_{m\geq1}$ is i.i.d for any $d\geq1$ and the
  distribution of the sequence $\big(\!\max\xi^N_{k,n+k}\big)_{n\geq0}$ does not depend on
  $k$. It is not hard to check that $\max\xi^N_{k,n}$ satisfies the rest of the hypotheses
  of the subadditive ergodic theorem (see Theorem 6.6.1 in \citet{durrPTE}) and thus
  $\lim_{n\to\infty}\max\xi^N_{0,n}/n$ exists almost surely and in $L^1$, and moreover the
  limit is non-random. Since $\big(\xi^N_{0,n}\big)_{n\geq0}$ has the same distribution as
  $\big(\eta^N_n\big)_{n\geq0}$, the same holds for $\lim_{n\to\infty}\max\eta^N_n/n$.

  The above proof can be straightforwardly adapted to obtain the existence of the limit
  for $\min\eta^N_t/t$. To show that the two limits are equal it is enough to prove that
  $(\max\eta^N_t-\min\eta^N_t)/t\to0$ in probability as $t\to\infty$. Observe that if we
  follow the genealogy of the particle at $\max\eta^N_t$ back in time and go back in time
  $N$ generations then we will necessarily reach a particle that is not in $\eta^N_t$ (at
  time $t$), and that is thus to the left of $\min\eta^N_t$. If we call $X_t$ the position
  of this particle and let $N_t$ be the number of branchings in the system up to time $t$,
  then clearly $\max\eta^N_t-X_t\leq\sum_{i=N_t-N}^{N_t}|R_i|$. Thus for any $\ep>0$ we
  have that
  \begin{align}
    \pp\!\left(\left|\frac{\max\eta^N_t-\min\eta^N_t}{t}\right|>\ep\right)
    &\leq\pp\!\left(|R_i|>\sqrt{t}\text{ for some }N_t-N\leq i\leq N_t\right)
    +\uno{\frac{N\sqrt{t}}{t}>\ep}\\
    &=1-\pp\!\left(|R_1|\leq\sqrt{t}\right)^N+\uno{\frac{N}{\sqrt{t}}>\ep}
    \xrightarrow[t\to\infty]{}0.
  \end{align}

  The monotone coupling introduced above allows to deduce that $a_N$ is non-decreasing. On
  the other hand, in the case $N=1$ we have that $\eta^1_t(1)$ is simply a random walk
  jumping at rate 1 whose jump distribution is that of $R_1\vee0$. Therefore
  $\ee(\eta^1_t(1))=bt$ with $b=\int_0^\infty x\rho(x)\,dx>0$, and thus $a_N\geq a_1=b>0$
  for all $N\geq1$.
\end{proof}

To prove parts (b) and (c) of Theorem \ref{thm:finite} we will work with the
discrete time version of the shifted process:
$\Delta^N_n=(\Delta^N_n(1),\dotsc,\Delta^N_n(N))$ with
\[\Delta^N_n(j)=\eta^N_n(j)-\eta^N_n(N).\]

\begin{prop}\label{prop:harris}
  $\Delta^N_n$ is a positive recurrent Harris chain.
\end{prop}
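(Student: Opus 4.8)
The plan is to verify the defining conditions of a positive recurrent Harris chain: (i) $\Delta^N_n$ is a Markov chain on a suitable state space, (ii) there is a set $A$, a probability measure $q$, a constant $\beta>0$ and $m\geq 1$ such that $\pp^x(\Delta^N_m\in\cdot)\geq\beta q(\cdot)$ for all $x\in A$ (the minorization), (iii) the chain returns to $A$ in finite expected time from every starting point, and (iv) $A$ is reached with probability one from everywhere. Since $\Delta^N_n(N)=0$ always, the chain lives on $\cx_N\cap\{\eta(N)=0\}$, which we may identify with an open subset of $\rr^{N-1}$ via the gaps $\Delta^N_n(1)\geq\dots\geq\Delta^N_n(N-1)\geq 0$; the dynamics are as described in the excerpt (branch a uniformly chosen particle by an independent $\rho$-displacement, delete the leftmost of the $N+1$, then re-center). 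Measurability of the transition kernel is routine and I would state it without much fuss.

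For the minorization I would take $A=\{\Delta^N\!: \Delta^N(1)\leq L\}$ for a fixed large $L$, i.e.\ all particles confined to $[0,L]$. The key observation is that in a single step there is positive probability — bounded below uniformly over $x\in A$ — of the following event: the particle chosen to branch is the rightmost one (probability $1/N$), and its offspring lands in some fixed interval $[L,L+1]$ to the right of everything, with a density bounded below (using that $\rho$ is a strictly positive continuous density near that displacement, since $x\in A$ forces the needed displacement to lie in a compact set; here I use the standing assumption that $\rho(x)\,dx$ is absolutely continuous with $\rho$ continuous and, from the hypotheses, positive on an interval). After this birth the leftmost of the $N+1$ particles is erased and the configuration is re-centered. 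Iterating this over $N-1$ steps, one can flush out all the original particles one at a time, so that after $m=N-1$ steps the configuration is a re-centered image of $N-1$ independent $\rho$-displaced copies of a single point, each such step contributing a density bounded below on a compact region. This produces, uniformly over the starting point in $A$, a lower bound $\beta q(\cdot)$ where $q$ has a density bounded below on a fixed open set; this simultaneously gives the minorization and shows that $\mu_N$ (once constructed) is absolutely continuous, which is Theorem~\ref{thm:finite}(b).

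To show $A$ is hit from everywhere with finite expected hitting time — which gives both Harris recurrence and positive recurrence — I would use the monotone coupling from the start of Section~\ref{sec:proofs:finite} together with the spread bound already proved in part (a): the argument there shows $(\max\eta^N_t-\min\eta^N_t)/t\to 0$ in probability, and in fact the genealogical bound $\max\eta^N_t-\min\eta^N_t\leq\sum_{i=N_t-N}^{N_t}|R_i|$ gives that the diameter $\Delta^N_n(1)$ is stochastically dominated by a sum of $N{+}1$ i.i.d.\ copies of $|R_1|$ at any time, hence has a finite, $n$-independent bound in expectation. Combined with a regeneration/Borel–Cantelli argument — each fresh block of $N$ steps has, conditionally, a probability bounded below (by the same construction as in the minorization, but now only requiring the diameter to drop below $L$, which is even easier and does not need $\rho$ bounded below, only that $\pp(|R_1|\leq C)$ be close to $1$ for $C$ large) of landing in $A$ — one gets geometric tails on the return time to $A$, hence finite expected return time from every state. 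This yields (b) via the standard Harris theorem and then (c) follows because positive recurrent Harris chains converge in total variation, which will be quoted when proving Theorem~\ref{thm:finite}(c).

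The main obstacle is the minorization step: one must be careful that the re-centering operation (subtracting $\eta^N_n(N)$) does not destroy the density lower bound, and that the compact region on which the post-step density is bounded below can be chosen \emph{independently} of the starting configuration $x\in A$. This is where confining to $A=\{\Delta^N(1)\leq L\}$ is essential — it pins the relevant $\rho$-displacements to a compact set on which $\inf\rho>0$, using the hypothesis that $\rho$ is continuous and strictly positive on an interval of $\rr$ — and it is the only place the absolute continuity and positivity assumptions on $\rho$ are genuinely used. Everything else (Markov property, measurability, recurrence via the diameter bound) is soft and I would treat it briefly.
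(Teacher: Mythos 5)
Your proposal is in the same spirit as the paper's: both verify Harris recurrence by exhibiting a small set $A$ together with a minorization, and both use the same core device of repeatedly branching the rightmost particle and sending its offspring a bounded distance to the right so that after $N-1$ steps all the original particles (save the former maximum) have been flushed out. But you make the argument more complicated than it needs to be, and in one place you slightly obscure what the key observation actually is. The paper takes $A=\{\xi:\xi(i)-\xi(i+1)\in(0,L),\ \xi(N)=0\}$ with $L$ chosen so that $\delta=\rho((0,L))>0$, and then notices that the flushing construction lands you in $A$ in exactly $N-1$ steps with probability at least $(\delta/N)^{N-1}$ \emph{starting from any configuration in $\cx_N$}, not just from $A$. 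This single uniform lower bound simultaneously gives condition (i) ($\tau_A<\infty$ a.s.), the minorization (by tracking the gap vector, which becomes a product of $\rho$-displacements), and the positive recurrence via the geometric bound on $\ee^\xi(\tau_A)$ — all with one construction. You instead choose $A=\{\Delta^N(1)\leq L\}$ and restrict the minorization to start from $A$ ``to pin the displacements to a compact set,'' and then you import the genealogical diameter bound from part (a) to control the chain between visits to $A$. Neither of these is necessary: the flushing construction uses only \emph{relative} displacements of bounded size, so the required density lower bound is automatically confined to $(0,L)$ regardless of the starting configuration, and the diameter bound plays no role. A further minor issue: you send the offspring to a fixed \emph{absolute} interval $[L,L+1]$, which interacts awkwardly with the re-centering; working with gaps (as the paper does) avoids the bookkeeping. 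Finally, you ask for $\rho$ to be bounded below on an interval, which is stronger than the paper needs — $\rho((0,L))>0$ suffices if the minorizing measure $q$ is taken to be the $\rho$-product on the gaps rather than Lebesgue. So the approach is sound and would work, but the clean version is shorter: one construction, one uniform bound, and everything falls out.
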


\begin{proof}
  Following \citet{athNey}, in order to show that $\Delta^N_n$ is Harris recurrent we need
  to show that there is a set $A\subseteq\cx_N$ such that
  \begin{enumerate}[label=(\roman*)]
  \item $\pp^\xi(\tau_A<\infty)=1$ for all $\xi\in\cx_N$, where
    $\tau_A=\inf\{n\geq0\!:\Delta^N_n\in A\}$.
  \item There exists a probability measure $q$ on $A$, a $\lambda>0$ and a $k\in\nn$ so
    that $\pp^\xi(\Delta^N_k\in B)\geq\lambda q(B)$ for all $\xi\in A$ and all $B\subseteq
    A$.
  \end{enumerate}
  To achieve this, choose some $L>0$ so that $\delta=\rho((0,L))>0$ and let
  \[A=\left\{\xi\in\cx_N\!:\xi(i)-\xi(i+1)\in(0,L)\,\text{for
    }i=1,\dotsc,N-1\,\,\text{and}\,\,\xi(N)=0\right\}.\] Then for any initial condition
  $\Delta^N_0\in\cx_N$ we can get to $A$ in $N-1$ steps via the following path:
  at time 1 we choose to branch the rightmost particle (the one at $\Delta^N_0\!(1)$, which
  happens with probability $N^{-1}$) and send the newborn particle to a location
  $x_1\in(\Delta^N_0\!(1),\Delta^N_0\!(1)+L)$ (which happens with probability at least
  $\delta$). Next we branch the particle at $x_1$ and send the newborn particle to a
  location $x_2\in(x_1,x_1+L)$ (which happens with probability at least $\delta/N$). If we
  continue this for $N-1$ steps we will end up with a configuration in $A$, and thus
  \begin{equation}
    \pp^\xi\!\left(\Delta^N_{N-1}\in A\right)\geq\left(\frac{\delta}{N}\right)^{N-1}.\label{eq:hitA}
  \end{equation}
  The bound is independent of the initial condition $\xi$, so by the Borel-Cantelli Lemma
  it follows that (i) holds. Moreover, if $B\subseteq A$ is of the form
  $B=\{\xi\in\cx_N\!:\,\xi(i)-\xi(i+1)\in B_i\subseteq(0,L)\,\text{for
  }i=1,\dotsc,N-1\,\,\text{and}\,\,\xi(N)=0\}$, then the preceding argument implies that
  \[\pp^\xi\!\left(\Delta^N_{N-1}\in
    B\right)\geq\frac{\rho(B_1)\dotsm\rho(B_{N-1})}{N^{N-1}},\] so by taking
  $\lambda=N^{-N+1}$ and $q$ to be the normalized Lebesgue measure on the first $N-1$
  coordinates of the configurations in $A$, we deduce that (ii) also holds.

  To check that $\Delta^N_n$ is positive recurrent it is enough to check that
  $\sup_{\xi\in\cx_N}\ee^\xi(\tau_A)<\infty$. This follows from \eqref{eq:hitA}
  and the strong Markov property by writing
  \begin{align}
    \ee^\xi\!\left(\tau_A\right)&=\sum_{n\geq1}\pp^\xi(\tau_A\geq n)
    \leq N-1+\sum_{i\geq1}\sum_{n=iN}^{(i+1)N-1}\pp^\xi(\tau_A\geq n)\\
    &\leq N-1+(N-1)\sum_{i\geq1}\left[1-\left(\frac{\delta}{N}\right)^{N-1}\right]^i
    =(N-1)\left(\frac{N}{\delta}\right)^{N-1}<\infty
  \end{align}
  for any $\xi\in\cx_N$.
\end{proof}

 \begin{proof}[Proof of Theorem \ref{thm:finite}(b)]
   The result now follows from Proposition \ref{prop:harris}. The fact that $\Delta^N_n$
   is positive recurrent implies that the invariant measure whose existence is assured by
   the Harris recurrence is finite. The absolute continuity of $\mu_N$ is a direct
   consequence of Theorem \ref{thm:finite}(c), which we prove below, together with the
   fact that if the initial condition for $\Delta^N_t$ is absolutely continuous, then so
   is the distribution of the process at all times.
 \end{proof}
 
 \begin{proof}[Proof of Theorem \ref{thm:finite}(c)]
   Let $A\subseteq\cx_N$ and $k=N-1$ be the objects which we found satisfy (i) and (ii) in
   the proof of Proposition \ref{prop:harris}. It is enough to prove the result along the
   $k$ subsequences of the form $\big(\Delta^N_{km+j}\big)_{m\geq0}$ with $0\leq
   j<k$. Moreover, using the Markov property at time $j$ we see that it is enough to prove
   the result along the subsequence $\big(\Delta^N_{km}\big)_{m\geq0}$, which is an
   aperiodic recurrent Harris chain. The result for this subsequence follows from Theorem
   4.1(ii) in \citet{athNey} as long as we have that $\sup_\xi\pp^\xi(\tau_A>t)<1$ for
   some $t>0$, where $\tau_A$ is the hitting time of $A$. This follows easily from the
   estimate in \eqref{eq:hitA} (which is uniform in $\xi$).
 \end{proof}

 \section{Proof of Theorem \ref{thm:wave}}
 \label{sec:proof:wave}

 Recall that in this part we are assuming that $\rho$ (and hence its tail distribution
 $R$) has exponential decay and, consequently, that the moment generating function of
 $\rho$, $\phi(\theta)=\int_{-\infty}^\infty e^{\theta x}\rho(x)\,dx$, is finite for
 $\theta\in(-\Theta,\Theta)$ (see \eqref{eq:expDecayRho} and \eqref{eq:expDecayR}). Before
 getting started with the proof of Theorem \ref{thm:wave} we need to prove the claim we
 implicitly made in \eqref{eq:claimGoodSpeed}.

 \begin{lem}\label{lem:goodSpeed}
   \[\min_{\lambda\in(0,\Theta)}\frac{\phi(\lambda)}{\lambda}=a,\]
   where $a$ is the asymptotic speed defined in Theorem \ref{thm:limAN}. Moreover, letting
   $\ls\in(0,\Theta)$ be the number such that $\phi(\ls)/\ls=a$, we have that
   $\phi'(\ls)=a$, $\phi(\lambda)/\lambda$ is strictly convex on $(0,\Theta)$ and
   the sign of $\phi'(\lambda)-\phi(\lambda)/\lambda$ equals that of $\lambda-\lambda^*$.
 \end{lem}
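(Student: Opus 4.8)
The plan is to study the single function $g(\lambda)=\phi(\lambda)/\lambda$ on $(0,\Theta)$ directly: show it is strictly convex with a unique interior minimizer, read off the last two assertions of the lemma from convexity, and then identify the minimum value of $g$ with $a$ using the Legendre-transform formula $\Lambda(a)=-1$.

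First I would record the elementary properties of $\phi$ that I need. Since $\rho$ is an absolutely continuous probability measure whose moment generating function is finite on $(-\Theta,\Theta)$, $\phi$ is $C^\infty$ there (differentiating under the integral sign), $\phi(0)=1$, and $\phi''(\theta)=\int x^2e^{\theta x}\rho(x)\,dx>0$, so $\phi$ is strictly convex; symmetry of $\rho$ kills all odd moments, so $\phi$ is even and $\phi'(0)=0$. Also $\phi(\theta)\to\infty$ as $\theta\to\Theta^-$, so by assumption \eqref{eq:expMoments:2} we get $g(\lambda)\to\infty$ as $\lambda\to\Theta^-$, while $g(\lambda)\to\infty$ as $\lambda\to0^+$ because $\phi(\lambda)\to1$.

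For the strict convexity of $g$, a direct computation gives $g''(\lambda)=\lambda^{-3}h(\lambda)$ with $h(\lambda)=\lambda^2\phi''(\lambda)-2\lambda\phi'(\lambda)+2\phi(\lambda)$, and then $h'(\lambda)=\lambda^2\phi'''(\lambda)$. The key observation, using symmetry once more, is that for $\lambda>0$
\[\phi'''(\lambda)=\int_{-\infty}^\infty x^3e^{\lambda x}\rho(x)\,dx=2\int_0^\infty x^3\sinh(\lambda x)\rho(x)\,dx>0,\]
so $h$ is strictly increasing on $(0,\Theta)$; since $h(0)=2\phi(0)=2>0$ this forces $h>0$ and hence $g''>0$ on $(0,\Theta)$. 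Being strictly convex and blowing up at both endpoints, $g$ attains its minimum at a unique point $\ls\in(0,\Theta)$, characterized by $g'(\ls)=0$, i.e. $\phi'(\ls)=\phi(\ls)/\ls$. Moreover $g'$ is strictly increasing with $g'(\ls)=0$, and since $\lambda g'(\lambda)=\phi'(\lambda)-\phi(\lambda)/\lambda$ with $\lambda>0$, the sign of $\phi'(\lambda)-\phi(\lambda)/\lambda$ equals that of $g'(\lambda)$, which equals that of $\lambda-\ls$. This already yields the convexity and sign statements, and it will yield $\phi'(\ls)=a$ once we know $g(\ls)=a$.

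The main step — and the one that requires the most care — is to prove $\min_\lambda g(\lambda)=g(\ls)=a$, and this is where the definition of $a$ enters. Consider $\psi(\theta)=a\theta-\phi(\theta)$ on $[0,\Theta)$: it is strictly concave, $\psi(0)=-1$, $\psi'(0)=a-\phi'(0)=a>0$ (using $a>0$ from Theorem \ref{thm:finite}(a)), and $\psi(\theta)\to-\infty$ as $\theta\to\Theta^-$, so $\sup_{\theta>0}\psi(\theta)$ is attained at a unique interior point $\theta_0$ with $\phi'(\theta_0)=a$ (and, since $\psi$ is increasing at $0$, this supremum also equals $\sup_{\theta\in\rr}\psi(\theta)$, i.e. the usual rate function, but only the $\theta>0$ version is needed here). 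The formula $\Lambda(a)=-1$ from \eqref{eq:defLambda} says exactly that $\sup_{\theta>0}\psi(\theta)=0$, i.e. $\psi(\theta_0)=0$, i.e. $\phi(\theta_0)=a\theta_0$, i.e. $g(\theta_0)=a$. But then $g'(\theta_0)=\theta_0^{-1}\bigl(\phi'(\theta_0)-\phi(\theta_0)/\theta_0\bigr)=\theta_0^{-1}(a-a)=0$, so $\theta_0$ is a critical point of $g$; by the uniqueness of such a point established above, $\theta_0=\ls$, whence $\min_\lambda g(\lambda)=g(\ls)=a$ and $\phi'(\ls)=\phi(\ls)/\ls=a$. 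The only delicate bookkeeping I anticipate is the interplay between $\sup_{\theta>0}$ and $\sup_{\theta\in\rr}$ and tracking where $a>0$ is used; everything else is a routine exploitation of the strict convexity of $\phi$ and $g$ and the strict concavity of $\psi$.
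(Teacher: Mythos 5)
Your proof is correct, and the end result is the same, but you take genuinely different paths on two of the three subclaims. For the strict convexity of $g(\lambda)=\phi(\lambda)/\lambda$ you arrive at the same numerator $h(\lambda)=\lambda^2\phi''(\lambda)-2\lambda\phi'(\lambda)+2\phi(\lambda)$, but then differentiate once more to get $h'=\lambda^2\phi'''$ and invoke the symmetry of $\rho$ to conclude $\phi'''>0$ on $(0,\Theta)$. The paper instead observes directly that $h(\lambda)=\int\bigl[(\lambda x-1)^2+1\bigr]e^{\lambda x}\rho(x)\,dx>0$, which is shorter, does not use symmetry, and avoids checking continuity of $h$ at $0$. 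For the identification $\min g=a$, you work forward from the Legendre condition: define $\psi(\theta)=a\theta-\phi(\theta)$, locate its maximizer $\theta_0$ via $\phi'(\theta_0)=a$, use $\Lambda(a)=-1$ (so $\sup_{\theta>0}\psi=0$) to get $\psi(\theta_0)=0$, hence $g(\theta_0)=a$ and $g'(\theta_0)=0$, and then invoke uniqueness of the critical point to conclude $\theta_0=\ls$. The paper works backward from $\ls$ and verifies $\sup_{\theta>0}\bigl[\theta\,g(\ls)-\phi(\theta)\bigr]=0$ with two one-line inequalities, one from $\theta=\ls$ and one from $g(\ls)\leq g(\theta)$; this sidesteps the need to introduce $\theta_0$ and to discuss the $\sup_{\theta>0}$ versus $\sup_{\theta\in\rr}$ distinction (which you handle carefully, and which does need the observation $\phi'(0)=0$, $a>0$). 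The sign claim is handled identically in both. Net effect: your argument leans a bit more on structural facts about $\phi$ and the Legendre transform, while the paper's is more self-contained and slightly more economical; both are sound.
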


 \begin{proof}
   Define $c(\lambda)=\phi(\lambda)/\lambda$ for $\lambda\in(0,\ls)$. A little calculus
   shows that $c(\lambda)$ is strictly convex:
   \[c''(\lambda)=\frac{\phi''(\lambda)}{\lambda}-\frac{2\phi'(\lambda)}{\lambda^2}
   +\frac{2\phi(\lambda)}{\lambda^3}
   =\frac{1}{\lambda^3}\int_{-\infty}^\infty\left[(\lambda x-1)^2+1)\right]e^{\lambda
     x}\rho(x)\,dx>0.\] It is clear that $c(\lambda)\to\infty$ as $\lambda\to0$. On the
   other hand, \eqref{eq:expMoments:2} implies $c(\lambda)\to\infty$ as
   $\lambda\to\Theta-$ as well.  Thus the minimum of $c$ is attained at some
   $\lambda^*\in(0,\Theta)$, and we have $c'(\ls)=0$, or
   $\phi'(\lambda^*)=\phi(\lambda^*)/\ls$, which will give the second claim in the lemma
   once we show that
   $c(\ls)=a$.
   
   Recalling the characterization of $a$ given after \eqref{eq:defLambda}, we
   need to show that
   \begin{equation}
     \sup_{\theta>0}\left[\theta c(\lambda^*)-\phi(\theta)\right]=0.
   \end{equation}
   This is easy: using the definition of $c$ we get
   \[\sup_{\theta>0}\left[\theta
     c(\lambda^*)-\phi(\theta)\right]\geq\lambda^*c(\lambda^*)-\phi(\lambda^*)
   =0,\]
   while for all $\theta>0$
   \[\theta c(\lambda^*)-\phi(\theta)\leq\theta c(\theta)-\phi(\theta)=0.\]

   Finally, to get the last claim in the lemma recall that $c'(\ls)=0$ and $c$ is convex,
   so 
   \[\phi'(\lambda)-\frac{\phi(\lambda)}{\lambda}=\lambda c'(\lambda)\]
   is negative for $\lambda\in(0,\ls)$ and positive for $\lambda\in(\ls,\Theta)$.
 \end{proof}

 Recall (see \eqref{eq:defK}) that
 \[k(x)=\frac{\lambda}{\phi(\lambda)}e^{\lambda x}R(x).\] As we explained in Section
 \ref{sec:wave} the proof of Theorem \ref{thm:wave} will depend on looking for positive
 solutions to the equation \eqref{eq:waveSol:densU}. We will actually consider a slightly
 more general equation:
 \begin{equation}
   \label{eq:waveSol:t}
   U(x)=\int_0^\infty\!U(y)k(x-y)\,dy\qquad\forall\,x\geq0,
 \end{equation}
where we look for a non-decreasing solution $U$, continuous except at the origin, with
$U(x)=0$ for all $x<0$.  
In Spitzer's terminology, a solution $U$ with these properties is a $P^*$-solution of
\eqref{eq:waveSol:t}. When $\lim_{x\to\infty}U(x)=1$ we call $U$ a $P$-solution, and
think of it as the distribution function of a non-negative random variable. The following summarizes
the two results of Spitzer that we will need.

\begin{thm2}[Theorems 2 and 4 in \citet{spitz1}]\label{thm:spitzer}
\mbox{}
\begin{enumerate}[label=(\alph*)]
\item If $\int_{-\infty}^{\infty}xk(x)\,dx\leq0$ then there is a
unique (up to multiplicative constant) $P^*$-solution of \eqref{eq:waveSol:t}.
\item If $\int_{-\infty}^{\infty}xk(x)\,dx<0$ then there is a unique $P$-solution of \eqref{eq:waveSol:t}
which can be obtained as the limit $U(x)=\lim_{n\to\infty}U_n(x)$ of the iterative procedure defined by
$U_{n+1}(x)=\int_0^\infty U_n(y)k(x-y)\,dy$ starting with an arbitrary continuous $U_0$
corresponding to the distribution function of a non-negative random variable.
\item If $\int_{-\infty}^{\infty}xk(x)\,dx\geq0$ then \eqref{eq:waveSol:t} has no $P$-solution.
\end{enumerate}
\end{thm2}

Repeating the arguments we used to show that $F(t,x)$ had a density (see \eqref{eq:nuPhi})
we see that if $U$ is a $P^*$-solution of \eqref{eq:waveSol:t} then there is a
non-negative function $u$ such that $U(x)=\int_0^x u(y)\,dy$. Again repeating previous
arguments (see the first part of the proof of Theorem \ref{thm:fb}), we deduce that $u$
satisfies \eqref{eq:waveSol:densU}, while obviously $u(x)=0$ for $x\leq0$. $u$ is
continuous except possibly at the origin by the dominated convergence theorem thanks to
the fact that $k$ is continuous.  Multiplying $u(x)$ by $e^{\lambda x}$ will allow us to
obtain a solution for \TW with the desired properties.

\begin{proof}[Proof of Theorem \ref{thm:wave}]
  Let $c\geq a$ and take $\lambda\in(0,\ls]$ such that $\phi(\lambda)/\lambda=c$ as above.
  The uniqueness of the solutions of \TW in this case follows from Theorem
  \ref{thm:spitzer}(a). In fact, if $w_1$ and $w_2$ are two solutions of
  \TW then $u_i(x)=e^{\lambda x}w_i(x)$ solves \eqref{eq:waveSol:densU} for
  $i=1,2$, and thus the functions $U_i(x)=\int_0^x u_i(y)\,dy$ are $P^*$-solutions of
  \eqref{eq:waveSol:t}, and they are continuous because the $u_i$ are locally
  integrable. Hence $U_1(x)=AU_2(x)$ for all $x\in\rr$ and some $A>0$, which implies that
  $w_1(x)=Aw_2(x)$ for all $x\in\rr$. Integrating this relation we get $A=1$ and
  uniqueness follows.
  
  To show existence we start by integrating by parts to obtain
  \[\int_{-\infty}^{\infty}\!xk(x)\,dx
  =\frac{\lambda}{\phi(\lambda)}\int_{-\infty}^{\infty}\!xe^{\lambda x}R(x)\,dx
  =\frac{1}{\phi(\lambda)}\int_{-\infty}^{\infty}\!
  \left(x-\frac{1}{\lambda}\right)e^{\lambda x}\rho(x)\,dx.\] Thus the sign of
  $\int_{-\infty}^{\infty}\!xk(x)\,dx$ is the same as that of
  $\phi'(\lambda)-\phi(\lambda)/\lambda$. This last quantity is strictly negative for
  $\lambda\in(0,\ls)$ and vanishes for $\lambda=\ls$ by Lemma \ref{lem:goodSpeed}.

  If $c>a$, then $\lambda<\ls$ and thus Theorem \ref{thm:spitzer}(b) provides us with a
  $P$-solution of \eqref{eq:waveSol:t} to which, by the discussion preceding this proof,
  we can associate a continuous (except at the origin) function $u$ satisfying
  \eqref{eq:waveSol:densU} and corresponding to the density of a non-negative random
  variable. Now let $A=\int_0^\infty e^{-\lambda x}u(x)\,dx$ which is clearly finite
  (actually $A<1$).  Define $w(x)=A^{-1}e^{-\lambda x}u(x)$. Then $w$ is the density of a
  non-negative random variable and it is easy to check that it satisfies
  \eqref{eq:waveSol:a} with $c=\phi(\lambda)/\lambda$. The random variable $w$ obviously
  satisfies \eqref{eq:waveSol:b}, \eqref{eq:waveSol:c} and \eqref{eq:waveSol:d} as well,
  it is continuous except possibly at the origin because so is $u$ and, by definition,
  $\int_0^\infty e^{\lambda x}w(x)\,dx=A^{-1}<\infty$. The last thing left to show in this
  case is that $w$ is differentiable (except at the origin), but this follows easily from
  writing, for $x\geq0$,
  \[w(x)=\frac{1}{c}\int_0^\infty\!w(y)\int_{x-y}^\infty\!\rho(z)\,dz\,dy
  =\frac{1}{c}\int_x^\infty\!\int_{0}^\infty\!w(y)\rho(z-y)\,dy\,dz,\]
  and using the fact that the integrand above is continuous. This establishes (a) in the
  case $c>a$.

  The equality $\int_0^\infty e^{\lambda x}w(x)\,dx=A^{-1}<\infty$ obtained above gives
  the first claim in (b). To prove the second claim in (b) we may obviously assume that
  $\widetilde\lambda\in(\lambda,\ls)$. Let $W(x)=\int_x^\infty w(y)\,dy$. It is not
  hard to check that $W$ satisfies
  \[W(x)=\frac{1}{c}\int_{-\infty}^\infty\!W(y)R(x-y)\,dy\quad\forall\,x\geq0.\]
  Then if $A=\sup_{x\geq0}e^{\widetilde\lambda x}W(x)<\infty$ we have that
  \begin{align}
    W(x)e^{\widetilde\lambda x}&=\frac{1}{c}\int_{-\infty}^{\infty}\!
    W(y)e^{\widetilde\lambda y}e^{\widetilde\lambda(x-y)}R(x-y)\,dy\\
    &\leq\frac{A}{c}\int_{-\infty}^{\infty}\!e^{\widetilde\lambda(x-y)}R(x-y)\,dy
    =\frac{A}{c}\frac{\phi(\widetilde\lambda)}{\widetilde\lambda}.
  \end{align}
  Taking supremum in $x\geq0$ and recalling that $c=\phi(\lambda)/\lambda$ the above says
  that
  \[A\leq\frac{\phi(\widetilde\lambda)}{\widetilde\lambda}\frac{\lambda}{\phi(\lambda)}A,\]
  and since $A>0$ this says that
  $\phi(\widetilde\lambda)/\widetilde\lambda\geq\phi(\lambda)/\lambda$.  But Lemma
  \ref{lem:goodSpeed} implies exactly the opposite for $\lambda<\widetilde\lambda<\ls$. This is a contradiction, and thus
  $A=\infty$, which finishes the proof of (b).

  The case $c=a$ is similar so we will skip some details. Now we have $\lambda=\ls$ and
  thus $\int_{-\infty}^{\infty}xk(x)\,dx=0$.  Theorem \ref{thm:spitzer}(a) provides us now
  with a $P^*$-solution of \TW to which corresponds a function $u$ which is
  the density of a measure supported on $[0,\infty)$ and which satisfies
  \eqref{eq:waveSol:densU}. Let $A=\int_0^\infty e^{-\ls x}u(x)\,dx$. We need this
  quantity to be finite, so that $w(x)=A^{-1}e^{-\ls x}u(x)$ is a continuous probability
  density. This follows from Theorem 6.2 of \citet{engib}, which assures that
  $U(x)=\int_0^xu(y)\,dy\leq Cx$ for some $C>0$, and integration by parts:
  \begin{align}
    A&=\int_0^\infty\!e^{-\ls x}u(x)\,dx=\lim_{x\to\infty}e^{-\ls
      x}U(x)-U(0)+\ls\int_0^\infty\!e^{-\ls x}U(x)\,dx\\
    &\leq C\ls\int_0^\infty\!e^{-\ls x}x\,dx<\infty.
  \end{align}
  It is easy again to verify that $w$ satisfies \TW, and its
  differentiability follows from the same reasons as above. Hence we have established (a)
  for the case $c=a$. Clearly
  \begin{equation}
    \int_0^x e^{\ls y}w(y)\,dy=A^{-1}\int_0^x u(y)\,dy=O(x)\label{eq:bdU}
  \end{equation}
  which is the second claim in (c). The first and third claims in (c) follow from two
  other consequences of the cited result in \citet{engib}, namely that $U(x)\to\infty$ as
  $x\to\infty$ and that $U$ is subadditive. For the third claim use the first of these
  properties of $U$ and the first equality in \eqref{eq:bdU}, while for the first one
  integrate by parts to get
  \begin{align}
    e^{\ls x}\int_x^\infty\!w(y)\,dy&=\int_x^\infty\!e^{-\ls(y-x)}u(y)\,dy
    =\int_0^\infty\!e^{-\ls z}u(x+z)\,dz\\
    &=\lim_{z\to\infty}e^{-\ls z}U(x+z)-U(x)+\int_0^\infty\!\ls e^{-\ls z}U(x+z)\,dz\\
    &\leq-U(x)+\int_0^\infty\!\ls e^{-\ls z}[U(x)+U(z)]\,dz
    \leq C\int_0^\infty\!\ls e^{-\ls z}z\,dz=\frac{C}{\ls}.
  \end{align}

  We are only left showing (d), that is, that there are no solutions of \TW
  when $c<a$. We start by observing that if $w$ were a solution then the above arguments
  would imply that $w$ is differentiable on the positive axis, and thus if we set
  $f_0(x)=w(x)$ in \FB we get $\gamma(t)=ct$. Therefore, to show the non-existence of
  solutions for $c<a$ it is enough to show that, given any $\ep>0$ and any $f_0$ supported
  on $[0,\infty)$, there is a $T>0$ such that the solution $(f(t,x),\gamma(t))$ of \FB satisfies
  \begin{equation}
    \label{eq:TGamma}
    \gamma(T)>(a-\ep)T.
  \end{equation}

  Recall the definition of the process $\hnu_t$ in the proof of Proposition
  \ref{prop:nuhat}, which corresponded to the deterministic measure valued limit of the
  branching random walk $\hnu^N_t$, and observe that we can run this process started with
  any initial measure (not necessarily an absolutely continuous one). Moreover,
  \eqref{eq:eqNuHat} still holds in this case by Theorem 5.3 of \citet{fourMel}. Consider
  a copy of $\hnu_t$ started with a unit mass at 0 and let $\widehat
  F(t,x)=\hnu_t([x,\infty))$. For $T>0$ we define $\chi_T>0$ to be such that $\widehat
  F(T,\chi_T)=1$.

  Applying \eqref{eq:eqNuHat} to $\varphi(y)=\uno{y\in[x,\infty)}$, we see that $\widehat
  F(t,x)$ satisfies \eqref{eq:FBp1} for all $x\in\rr$. Now consider a copy of $\nu_t$
  started at the product measure $\nu_0$ defined by $f_0$. Then $(F(t,x),\gamma(t))$ satisfies
  \FBp and, since $\gamma$ is strictly increasing, it satisfies \eqref{eq:FBp1} for
  $x=\gamma(T)$ and all $t\in[0,T)$. We deduce that
  \[\frac{\partial}{\partial t}\big(F(t,\gamma(T))-\widehat F(t,\gamma(T))\big)
  =\int_{-\infty}^\infty\!\big(F(t,y)-\widehat F(t,y)\big)\rho(\gamma(T)-y)\,dy\] for all
  $t\in[0,T)$. Since $F(0,x)=\int_x^\infty f_0(y)\,dy$ and $\widehat F(0,x)=\uno{x\leq0}$,
  we have that $F(0,x)\geq\widehat F(0,x)$ for all $x$ and thus the above equation implies
  that
  \[F(t,\gamma(T))\geq\widehat F(t,\gamma(T))\] for all $t\in[0,T)$.
  Now $\widehat F(t,x)$ is clearly continuous in $t$, while
  \begin{align}
    F(T,\gamma(T))-F(t,\gamma(T))&=F(t,\gamma(t))-F(t,\gamma(T))=\int_{\gamma(t)}^{\gamma(T)}
    \!f(t,y)\,dy\leq\int_{\gamma(t)}^{\gamma(T)}\!\hf(t,y)\,dy\\
    &\leq\int_{\gamma(t)}^{\gamma(T)}\!\hf(T,y)\,dy\xrightarrow[t\to T-]{}0
  \end{align}
thanks to the
  continuity of $\gamma$.  Therefore the inequality also holds for $t=T$, which gives
  $1=F(T,\gamma(T))\geq\widehat F(T,\gamma(T))$, whence
  \begin{equation}
    \chi_T\leq\gamma(T).\label{eq:bd1chi}
  \end{equation}

  To finish the proof of \eqref{eq:TGamma} we need to show that
    \begin{equation}\label{eq:bd2chi}
    \chi_T>(a-\ep)T
  \end{equation}
  for large enough $T$. Observe that $\hnu_t$ (which we recall is started with 
  $\hnu_0=\delta_0$), corresponds to the mean measure of the branching random walk
  $\xi^1_t$ (started with just one particle located at the origin). This can be made
  precise by writing down the formula for the generator of $\xi^1_t$ and applying
  it to functions of the form
  $\xi\mapsto\br{\xi}{\varphi}=\sum_{i=1}^{N(\xi)}\varphi(\xi(i))$, where $N(\xi)$ is the
  number of particles in the branching random walk configuration $\xi$, to deduce that
  after taking expectations the resulting equation is the same as \eqref{eq:eqNuHat}. We
  leave the details to the reader, and instead only state that the above implies that
  \begin{equation}
    \hnu_T([cT,\infty))=\ee\!\left(\xi^1_T([cT,\infty))\right)\label{eq:hnuBRW}
  \end{equation}
  for every $c$, where $\xi^1_T([cT,\infty))$ denotes the number of particles in the
  branching random walk to the right of $cT$ at time $T$. On the other hand it is well
  known that
  \[\ee\!\left(\xi^1_T([cT,\infty))\right)=e^T\pp\!\left(S_T\geq cT\right)\]
  where $S_t$ is defined as in \eqref{eq:defLambda} (see for instance the third equation
  in the proof of Proposition I.1.21 in \citet{ligg2}), and thus \eqref{eq:defLambda}
  implies that
  \[\lim_{T\to\infty}\frac{1}{T}\log\!\left(\ee\!\left(\xi^1_T([cT,\infty))\right)\right)=\Lambda(c)+1.\]
  Since $\Lambda$ is strictly decreasing on $[0,\infty)$ and $\Lambda(a)=-1$ we deduce
  that
  \[\lim_{T\to\infty}\frac{1}{T}\log\!\left(\ee\!\left(\xi^1_T([cT,\infty))\right)\right)>0\qquad\text{for
    all }0\leq c<a.\]
  This together with \eqref{eq:hnuBRW} implies that
  \begin{equation}
    \hnu_T([(a-\ep)T,\infty))>1\label{eq:aep}
  \end{equation}
  for  large enough $T$. Therefore \eqref{eq:bd2chi} holds and the proof is complete.
\end{proof}

\paragraph{Acknowledgments}
The authors would like to thank Nathana\"{e}l Berestycki and Lee Zhuo Zhao for pointing out
a mistake in an earlier version of one of the proofs, and an anonymous referee
for valuable comments and suggestions.\\[2pt]
\noindent
\emph{(Added April 2023)} The authors also thank Rami Atar for pointing out a mistake in the proof of uniqueness of solutions to \FB in the published version of this paper.

\bibliographystyle{natbib} \bibliography{biblio}

\end{document}